\documentclass[a4paper,11pt,french,english]{amsart}
\usepackage[english]{babel}
\usepackage[utf8]{inputenc}
\usepackage{lmodern}
\usepackage[T1]{fontenc}
\usepackage{amsmath,amsthm,amssymb,amsfonts,mathrsfs}
\usepackage{amscd}
\usepackage{pstricks}
\usepackage{pst-node}
\usepackage[a4paper,left=3.2cm,right=3.2cm,top=4cm,bottom=4cm]{geometry}
\usepackage{enumerate}
\usepackage[shortlabels]{enumitem}
\usepackage[linktocpage=true]{hyperref}
\usepackage{url}
\usepackage{csquotes}
\usepackage{xspace}

\newcommand{\mc}{\mathcal}

\newcommand{\Aut}{\mathrm{Aut}}
\newcommand{\Comm}{\mathrm{Comm}}

\newcommand{\Res}{\mathrm{Res}}

\newcommand{\Z}{\mathbb{Z}}
\newcommand{\Q}{\mathbb{Q}}

\newcommand{\RadLE}{\mathrm{Rad_{LE}}}

\newcommand{\res}{\mathrm{Res}}

\newcommand{\Sub}{\ensuremath{\operatorname{Sub}}}%

\newcommand{\Core}{\ensuremath{\mathrm{Core}}}%
\newcommand{\QZ}{\ensuremath{\mathrm{QZ}}}%
\newcommand{\lpc}{\ensuremath{\pi_{\mathrm{loc}}}}%

\title[]{Lattices determined by their commensurator}

\date{April 6, 2026}

\author{Adrien Le Boudec}
\address{CNRS, UMPA - ENS Lyon, 46 all\'ee d'Italie, 69364 Lyon, France}
\email{adrien.le-boudec@ens-lyon.fr}

\author{Colin Reid}
\address{Western Sydney University, School of Computer, Data and Mathematical Sciences, Penrith, NSW 2751, Australia}
\email{colin@reidit.net}

\thanks{}

\theoremstyle{plain}
\newtheorem{Theorem}{Theorem}[section]
\newtheorem{Proposition}[Theorem]{Proposition}
\newtheorem{Corollary}[Theorem]{Corollary}
\newtheorem{Lemma}[Theorem]{Lemma}

\newtheorem{TheoremIntro}{Theorem}
    
\newtheorem{Corollary-intro}[TheoremIntro]{Corollary}

\theoremstyle{definition}
\newtheorem{Definition}[Theorem]{Definition}
\newtheorem{Question}{Question}

\newtheorem{Example-intro}{Example}
\newtheorem{Remark}[Theorem]{Remark}

\begin{document}

\maketitle

\begin{abstract}
	Let $\Gamma$ be a finitely generated cocompact lattice of a totally disconnected locally compact group $G$, and $C$ a dense subgroup of $G$ that contains and commensurates $\Gamma$. We study the problem of describing all finitely generated commensurated subgroups of $C$. We establish general rigidity results ensuring every finitely generated commensurated subgroup of $C$ is virtually contained in $\Gamma$. In more concrete situations, in fact we conclude that up to commensurability, $\Gamma$ is the only infinite finitely generated commensurated subgroup of $C$. For instance this last conclusion holds when $G$ is the automorphism group of a tree. This settles in particular the problem whether two non-commensurable cocompact tree lattices may have the same commensurator. Further applications include commensurators of cocompact lattices in other groups of automorphisms of trees, as well as commensurators of graph product of finite groups in automorphism groups of right-angled building.
\end{abstract}


\section{Introduction}

Let $G$ be a locally compact group, and $\Gamma$ a discrete subgroup of $G$. The commensurator $\Comm_G(\Gamma)$ of $\Gamma$ in $G$ is the set of $g \in G$ such that $\Gamma$ and $g \Gamma g^{-1}$ are commensurable. Recall that two subgroups are commensurable if their intersection has finite index in each of them. A general problem consists in relating properties of $\Gamma$ with those of its commensurator $\Comm_G(\Gamma)$. One key result in this realm is Margulis' arithmeticity criterion, which asserts that when $G$ is a connected semisimple Lie group with trivial center and no compact factor and $\Gamma$ is an irreducible lattice, then $\Gamma$ is arithmetic if and only if $\Comm_G(\Gamma)$ is a dense subgroup of $G$ \cite[Chap.\ IX (1.9)]{Margulis-book}. Another closely related result of Margulis in this setting is the commensurator superrigidity theorem, which asserts that a Zariski dense linear representation of $\Comm_G(\Gamma)$ in a simple algebraic group under which the image of $\Gamma$ is unbounded extends to a continuous representation of $G$ \cite[Chap.\ VII Th.\ 5.4]{Margulis-book}. 

Those results and the wealth of techniques employed had a vast influence in several other contexts, notably in situations where the ambient group $G$ is no longer a connected Lie group or an algebraic group over a local field, especially in the general setting of non-positively curved metric spaces and their isometry groups. One instance of such a situation is where $G = \Aut(T)$ is the automorphism group of a  locally finite tree. A remarkable fact in that setting is that when $\Aut(T)$ acts cocompactly on $T$, any two cocompact lattices of $\Aut(T)$ are always conjugate up to commensurability \cite{Leighton-finite-cover,Bass-Kulk-90}. In particular any two cocompact lattices of $\Aut(T)$ have their commensurator that are conjugate. Bass--Kulkarni for bi-regular trees and Liu in general showed that the commensurator of a cocompact lattice of $\Aut(T)$ is always dense \cite{Bass-Kulk-90, Liu-density}. Mozes \cite{Mozes-SL2-dense-commensurator} and Abramenko-Rémy \cite{Abramenko-Remy} showed the same result for certain non-cocompact lattices. Still in the context of cocompact lattices in automorphism groups of trees, Lubotzky--Mozes--Zimmer showed a commensurator superrigidity theorem (relative to the context, i.e.\ with target a group of automorphisms of a tree instead of a simple algebraic group) \cite{LMZ-superrigidity}. A vast generalization was then obtained by Burger--Mozes in certain $\mathrm{CAT}(-1)$ spaces \cite{BM-CAT-minus-one-commensurator}, and very general versions with only a non-positive curvature assumption made at the level of target groups were proven by Monod \cite{Monod-super-rigidity} and Gelander--Karlsson--Margulis \cite{Gelander-Karlsson-Margulis}. Shalom also established various rigidity results for lattices and their commensurators in a very general setting based on superrigidity for unitary representations \cite{Shalom-rigid-commens}. Beyond the case where $\Gamma$ is assumed a priori to be a lattice, discrete subgroups with a dense commensurator have been investigated by Leininger--Long--Reid \cite{Leininger-Long-Reid} and Mj \cite{Mj-discrete-comm-2011} when $G = \mathrm{PSL}(2,\mathbb{C})$, and more recently by Fisher--Mj--van Limbeek \cite{Fisher-Mj-vanLimbeek-JEP}.

In the present paper we are concerned with the general problem of understanding when a lattice $\Gamma$ in a locally compact group $G$ is determined by its commensurator.

\begin{Question} \label{quest-intro}
	Let $G$ be a locally compact group, and $\Gamma$ a lattice. Suppose $\Gamma'$ is a lattice in $G$ such that   $\Comm_G(\Gamma) =  \Comm_G(\Gamma')$. Is $\Gamma'$ necessarily commensurable with $\Gamma$ ?
\end{Question}

Any subgroup of $G$ commensurable with $\Gamma$ is also a lattice in $G$ with the same commensurator as $\Gamma$, so recovering $\Gamma$ up to commensurability is the best one can hope for. When $\Comm_G(\Gamma)$ is a discrete subgroup of $G$, then necessarily $\Gamma$ has finite index in its commensurator, and the answer is always positive. Hence Question \ref{quest-intro} is interesting for lattices with a non-discrete commensurator. Actually it is natural to assume $\Comm_G(\Gamma)$ is dense in $G$, as the natural locally compact group that encapsulates the context is the closure of $\Comm_G(\Gamma)$ in $G$. When $G$ is a connected semisimple Lie group with trivial center and no compact factor, the aforementioned arithmeticity criterion and superrigidity theorem imply a positive answer to Question \ref{quest-intro}. In the case where $G = \Aut(T)$ and the lattices $\Gamma, \Gamma'$ are cocompact, Question \ref{quest-intro} is a long-standing open problem. It was originally asked in \cite{Bass-Kulk-90}, and also appeared in \cite{LMZ-superrigidity, Mozes-SL2-dense-commensurator,Bass-Lub-book,Farb-Mozes-Thomas}. 

We will be working in the context where $G$ is a totally disconnected locally compact (tdlc) group, and lattices are finitely generated and cocompact. In that situation we actually consider a  more general problem than Question \ref{quest-intro}. First we consider the situation where $C$ is any dense subgroup of $G$ such that $\Gamma \leq C \leq \Comm_G(\Gamma)$. Second, and more significantly, we consider the problem of studying all finitely generated commensurated subgroups $H$ of $C$ (without assuming $H$ is a lattice in $G$). The group $C$ comes with a finitely generated commensurated subgroup given by the context (namely $\Gamma$), and the first question one can ask is whether there are other ones. That problem shares some similarities with the $S$-arithmetic version of the Margulis--Zimmer problem \cite{Sha-Willis}.

\bigskip

\textbf{Results.} In this general setting with $\Gamma$ a finitely generated cocompact lattice of a tdlc group $G$, and $C$ a dense subgroup of $G$ such that $\Gamma \leq C \leq \Comm_G(\Gamma)$, our first theorem provides sufficient conditions ensuring every finitely generated commensurated subgroup $H$ of $C$ is virtually contained in $\Gamma$ (i.e.\ some finite index subgroup of $H$ is contained in $\Gamma$). It is worth pointing out there is another natural commensurated subgroup of $C$ other than $\Gamma$, namely the intersection $C \cap U$ between $C$ and a compact open subgroup $U$ of $G$. Since $\Gamma \cap U$ is finite and $C \cap U$ is dense in $U$, the subgroup $C \cap U$ is never virtually contained in $\Gamma$ (provided we are not in the trivial situation where $G$ is discrete). Hence the conclusion of the theorem cannot encompass all commensurated subgroups of $C$. 

We recall some terminology. A profinite group is finitely generated if it has a dense finitely generated subgroup. A tdlc group $G$ is \textbf{locally finitely generated} if some compact open subgroup of $G$ is finitely generated (since all compact open subgroups are commensurable, this is equivalent to ask that every compact open subgroup is finitely generated). The \textbf{normal core} of a subgroup $L$ of $G$ is $\Core_G(L) = \bigcap_G gLg^{-1}$. The following statement is a simple version of Theorem \ref{thm-main-not-loc-fg} below.

\begin{TheoremIntro} \label{thm-intro-main-not-loc-fg}
	Let $\Gamma$ be a finitely generated cocompact lattice of a tdlc group $G$, and $C$ a dense subgroup of $G$ such that $\Gamma \leq C \leq \Comm_G(\Gamma)$. Suppose: \begin{enumerate}
	\item  \label{item-faith-CayAb} $G$ admits a compact open subgroup with trivial normal core;
		\item   \label{item-simplicity-assumption} Every closed normal subgroup of $G$ is  discrete or open; 
		\item \label{item-not-loc-fg} $G$ is not locally finitely generated.
	\end{enumerate} 
	Then every finitely generated commensurated subgroup of $C$ is virtually contained in $\Gamma$. 
\end{TheoremIntro}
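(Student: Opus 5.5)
The plan is to compare the topology that $H$ inherits from $G$ with the topology $H$ defines on $C$ as a commensurated subgroup. Let $H \leq C$ be finitely generated and commensurated. Let $S = C/\!/H$ be the Schlichting completion of the pair $(C,H)$. In $S$ the closure $\overline{H}$ is compact open and finitely generated as a profinite group, since $H$ is dense in it. Consider the diagonal map $C \to G \times S$ and let $D$ be the closure of its image. The projection $p_G(D)$ contains $C$, so it is dense in $G$; likewise $p_S(D)$ is dense in $S$. Set
\[ N = \{ g \in G : (g,1) \in D\}, \qquad M = \{ s \in S : (1,s) \in D\}. \]
These are closed subgroups normalized by the dense subgroups $p_G(D)$ and $p_S(D)$, hence closed normal subgroups of $G$ and of $S$ respectively. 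By assumption~(\ref{item-simplicity-assumption}), $N$ is either discrete or open in $G$.

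\textbf{Case 1: $N$ open.} I want to show that $G$ is locally finitely generated, contradicting~(\ref{item-not-loc-fg}). Take $W = D \cap (U \times \overline{H})$ for a compact open subgroup $U$ of $G$; this is a compact open subgroup of $D$. Its image under $p_S$ is open in $S$, by the open mapping theorem for $\sigma$-compact groups (after passing to a $\sigma$-compact open subgroup of $D$), and is therefore commensurable with $\overline{H}$, hence finitely generated. Its image under $p_G$ is open in $G$ as well. The aim is to show that $p_G(W)$ is topologically finitely generated. I would do this by showing that $M$ is small, namely that $M \cap \overline{H}$ is finite or trivial. For this I would use the fact that $\overline{H}$ has trivial normal core in $S$ by construction, together with the assumption~(\ref{item-faith-CayAb}) that some compact open subgroup of $G$ has trivial normal core. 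The hope is that openness of $N$ then makes $D$ locally close to $U \times \overline{H}$ in a way that transfers finite generation from the $S$-side to the $G$-side. I expect this bookkeeping, namely deciding which kernel must be controlled and how the two core-freeness conditions kill it, to be the main obstacle.

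\textbf{Case 2: $N$ discrete.} Here the goal is to show that $H$ is discrete in $G$, that is, $H \cap U$ is finite for compact open $U$, by comparing the two topologies on $C$ through $D$. Given that, $H$ is a finitely generated discrete subgroup of $G$ commensurated by the dense subgroup $C$. Conjugating the finitely many generators of $H$ by elements of $C$ close to $1$, discreteness forces these elements to fix the generators. It follows that some open subgroup $V$ centralizes a finite index subgroup $H_0$ of $H$. Then $H_0 \leq \QZ(G)$. With assumptions~(\ref{item-faith-CayAb}) and~(\ref{item-simplicity-assumption}), I expect $\QZ(G)$ to be controlled: its closure is a normal subgroup, so it is discrete or open, and core-freeness rules out the bad option. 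From this I would deduce that $H_0$ lies in a discrete normal subgroup $Q$ of $G$. Since $\Gamma$ is a cocompact lattice, $Q\Gamma$ is discrete, hence $\Gamma$ has finite index in $Q\Gamma$. Therefore $H_0 \cap \Gamma$ has finite index in $H_0$, so $H$ is virtually contained in $\Gamma$.
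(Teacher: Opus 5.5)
There is a genuine gap. Case~1 is not carried out, and Case~2 rests on a step that is false.

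\textbf{The false step in Case~2.} From ``$H$ is finitely generated, discrete, and commensurated by the dense subgroup $C$'' you cannot conclude that an open subgroup centralizes a finite index subgroup of $H$. Commensurating is not normalizing. For $c\in C$ near $1$ the element $chc^{-1}$ is near $h$, but it lies in $cHc^{-1}$, not in $H$, so discreteness of $H$ says nothing about it.

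A concrete counterexample: take $G=\Aut(T_d)$ with $d\ge 3$, $C=\Comm_G(\Gamma)$ and $H=\Gamma$, so that all hypotheses hold. Since $C$ is discrete in $G\times C/\!\!/\Gamma$, your $D$ is just the diagonal copy of $C$ and $N$ is discrete, so you are in Case~2. Your argument would then put a finite index subgroup of $\Gamma$ inside $\QZ(\Aut(T_d))$, which is trivial.

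The argument you have in mind is that of Proposition~\ref{prop-latt-fi-normalizer}, which needs $H$ to \emph{normalize} a lattice. The paper only reaches that situation (Propositions~\ref{prop-one-latt-normalizes-other} and~\ref{prop-latt-cpct-by-discrete}) after producing a compact-by-discrete structure.

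\textbf{The other unsupported steps.} The earlier step ``$N$ discrete $\Rightarrow$ $H$ discrete in $G$'' is unargued, and it cannot follow from the dichotomy on $N$ alone. For $U$ compact open with trivial core and $H=C\cap U$, one has $C/\!\!/H\cong G$, $D$ is the diagonal and $N=\{1\}$, yet $H$ is dense in $U$. So finite generation of $H$ and hypothesis~(3) would have to enter at that point, and nothing in the proposal says how. Case~1 is, as you say yourself, only a hope. Its open mapping step also needs $p_S(D)=S$, which is not automatic.

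\textbf{The missing idea.} Complete with respect to $\Gamma$, not $H$. The group $C$ is a cocompact lattice in $G\times C/\!\!/\Gamma$, and it is this lattice structure that transports finite generation to the $G$ side. In $G\times C/\!\!/H$ the group $C$ is in general not discrete, for example when $H=C\cap U$.

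The paper's argument runs as follows.
\begin{enumerate}
\item Take an open envelope $E$ of $\tau_{C,\Gamma}(H)$ (Theorem~\ref{thm:open_envelope}); it is compactly generated because $H$ is finitely generated.
\item Set $\Lambda=\tau_{C,\Gamma}^{-1}(E)$, a representative of the join $[\Gamma]\vee[H]$ (Corollary~\ref{cor:commensurated_join}), and $L=\overline{\Lambda}\le G$. Then $\Lambda$ is a cocompact lattice in $L\times E$ with dense projections.
\item For $U\le L$ compact open, $\Lambda\cap(U\times E)$ is finitely generated and projects densely to $U$. Hence $L$ is locally finitely generated.
\item Since $\Lambda$ is commensurated by the dense subgroup $C$, $\Res(L)$ is normal in $G$ (Lemma~\ref{lem-commens-normalizes-res}). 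The dichotomy~(2) is applied to $\Res(L)$, not to a kernel like your $N$.
\item If $\Res(L)$ is open, then $L$ is open and $G$ would be locally finitely generated, contradicting~(3).
\item If $\Res(L)$ is discrete, shrink $\Gamma$ so that $\Gamma\le L$, making $L$ compactly generated. Theorem~\ref{thm-res-disc-res-triv} then gives a compact open normal subgroup $K'$ of $L$ with $K'\Gamma$ of finite index in $L$, and Proposition~\ref{prop-latt-cpct-by-discrete}, applied inside $L$, finishes the proof.
\end{enumerate}
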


Making an assumption on normal subgroups of $G$ is quite natural in this setting. Clearly assumption (\ref{item-simplicity-assumption}) is satisfied if $G$ is topologically simple, but (\ref{item-simplicity-assumption}) is much less stringent than topologically simplicity. Assumption (\ref{item-faith-CayAb}) is very mild, especially in the presence of (\ref{item-simplicity-assumption}). Assumption (\ref{item-not-loc-fg}) is the most restrictive one. It is worth noting Theorem \ref{thm-intro-main-not-loc-fg} fails without this assumption, see Example \ref{ex-intro} below.

When $G \leq \mathrm{Aut}(T)$ is a closed subgroup acting cocompactly on $T$, (\ref{item-faith-CayAb}) is always true, and there are several known  conditions ensuring (\ref{item-simplicity-assumption}) is true, at least when $G$ acts minimally on $T$. One is Tits' independence property \cite{Tits-arbres}, or some of its variations. Another one is that the local action of $G$ is primitive and $G$ has no finite quotient \cite[(1.7)]{BM00a}. Deciding when (\ref{item-not-loc-fg}) is true is delicate, but it is at least true for the entire group $\mathrm{Aut}(T)$. From Theorem \ref{thm-intro-main-not-loc-fg} together with an independent result allowing for the the existence of compact normal subgroups (see Proposition  \ref{prop-latt-cpct-by-discrete} below) we obtain:

\begin{TheoremIntro} \label{thm-intro-main-trees}
	Let $T$ be a locally finite  tree such that  $\Aut(T)$ acts cocompactly on $T$. Let $\Gamma$ be a cocompact lattice of $G = \Aut(T)$. Then, up to commensurability, $\Gamma$ is the only infinite finitely generated commensurated subgroup of $\Comm_G(\Gamma)$.
\end{TheoremIntro}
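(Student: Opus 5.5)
The plan is to reduce Theorem \ref{thm-intro-main-trees} to Theorem \ref{thm-intro-main-not-loc-fg} together with the announced Proposition \ref{prop-latt-cpct-by-discrete}, which handles compact normal subgroups. Set $C = \Comm_G(\Gamma)$. The argument has three parts: a reduction to a minimal subtree, a structural analysis of the closure $G' = \overline{C}$, and a final step upgrading ``virtually contained in $\Gamma$'' to ``commensurable with $\Gamma$''.

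\emph{Reduction and closure.} First I reduce to the case where $T$ has no vertices of degree one, so that $T$ is the unique minimal $\Aut(T)$-invariant subtree. I pass to the minimal invariant subtree, which is preserved by $\Aut(T)$, and observe that lattices and commensurators are compatible with this restriction up to a compact kernel. By Bass--Kulkarni and Liu, $C$ is dense in $\Aut(T)$. I therefore take the ambient group to be $\Aut(T)$ itself, or the quotient of it by the compact kernel of its action on the minimal subtree.

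\emph{Checking the hypotheses of Theorem \ref{thm-intro-main-not-loc-fg}.}
\begin{enumerate}
\item[(1)] A vertex stabiliser has trivial normal core because the action is faithful and the vertex set is a single $G$-orbit up to finitely many orbits. Note that when $T$ is not a bi-regular tree, finitely many orbits is what we have.
\item[(2)] Tits' independence property holds for $\Aut(T)$. Hence the subgroup $\Aut(T)^+$ generated by edge stabilisers is simple, or abstractly simple, when $T$ is not a line. It is also open, of finite index in the type-preserving part. A closed normal subgroup $N$ either has $N \cap \Aut(T)^+$ trivial, in which case $N$ centralises an open subgroup and so is discrete (indeed trivial after the minimality reduction), or contains $\Aut(T)^+$, in which case it is open.
\item[(3)] Non-local finite generation of $\Aut(T)$ is stated in the paper as known. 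Concretely, a vertex stabiliser surjects onto the unrestricted product of the finite groups $\mathrm{Sym}$ acting on the sphere levels. This is an infinite product of nontrivial groups with, say, $\Z/2$ quotients at each level, so $U$ has an infinite elementary abelian $2$-quotient and is not topologically finitely generated.
\end{enumerate}

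\emph{Degenerate cases.} These are the situations where $T$ is a line, or where the minimal reduction is not faithful. They are handled by Proposition \ref{prop-latt-cpct-by-discrete}. In those cases $G$ is compact-by-discrete, with the discrete quotient virtually $\Z$, and the proposition gives the conclusion directly.

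\emph{From virtual containment to commensurability.} Let $H$ be an infinite finitely generated commensurated subgroup of $C$. By Theorem \ref{thm-intro-main-not-loc-fg}, after passing to a finite-index subgroup we may assume $H \leq \Gamma$. Two things remain.

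First, I show $H$ has finite index in $\Gamma$. Consider the closure $\overline{H}$; since $H$ is discrete, this is $H$ itself. Since $H$ is commensurated by the dense subgroup $C$, the set $\Lambda$ of limit points of $H$ in $\partial T$, or equivalently its minimal subtree $T_H$, is $C$-invariant up to finite Hausdorff distance. More robustly, for $c \in C$, the subgroup $cHc^{-1}$ is commensurable with $H$, so $cT_H$ lies within bounded distance of $T_H$.

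I expect this step to be the main obstacle. What I would try first is the following. Since $H$ is infinite and finitely generated, $T_H$ is nonempty: $H$ contains a hyperbolic element, because a finitely generated group acting with all elements elliptic has a global fixed point, which is impossible since $H$ is infinite and discrete. The limit set $\Lambda(H)$ is a closed nonempty subset of $\partial T$, and it is invariant under $C$ because commensurable groups have equal limit sets. Since $C$ is dense in $\Aut(T)$, $\Lambda(H)$ is $\Aut(T)$-invariant. Because $\Aut(T)$ acts minimally on $\partial T$ (no degree-one vertices, not a line), this forces $\Lambda(H) = \partial T$.

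A finitely generated subgroup of the free-by-finite group $\Gamma$ is quasiconvex. A quasiconvex subgroup with full limit set has finite index, because its convex core is cocompact. Therefore $[\Gamma : H] < \infty$, and $H$ is commensurable with $\Gamma$.
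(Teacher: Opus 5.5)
Your reduction to virtual containment follows the paper: property (P) with Theorem~\ref{thm-Tits}, Lemma~\ref{lem-tree-not-loc-fg}, Theorem~\ref{thm-dense-tree}, then Theorem~\ref{thm-intro-main-not-loc-fg}, with Proposition~\ref{prop-latt-cpct-by-discrete} absorbing compact normal subgroups. Your last step, however, is genuinely different.

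The paper passes to a free finite-index subgroup of $\Gamma$ and applies Proposition~\ref{prop-commens-free} (M.~Hall's theorem) to $H\cap\Gamma$, which is commensurated in $\Gamma$. You argue dynamically. Commensurable subgroups have equal limit sets, so $\Lambda(H)$ is a nonempty closed invariant subset of $\partial T$, hence equal to $\partial T$. A finitely generated $H$ acts cocompactly on its minimal subtree (the tree incarnation of quasiconvexity), which is then the hull of $\partial T$, so $[\Gamma:H]<\infty$. This is correct.

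It in fact needs only commensuration by $\Gamma$. Since $\Gamma$ is non-elementary with $\Lambda(\Gamma)=\partial T$, it already acts minimally on $\partial T$, so the density of $C$ is superfluous here. Your route is the standard argument that infinite quasiconvex commensurated subgroups of hyperbolic groups have finite index, so it extends to locally quasiconvex hyperbolic lattices. The paper's is a short algebraic citation specific to virtually free groups.

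Your case bookkeeping contains false claims that need repair.

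\emph{Non-faithful action on $T_{min}$.} This does not make $G$ compact-by-discrete. For $T_3$ with two pendant leaves at each vertex, the kernel is $K\cong\prod_v\Z/2$ and $G/K\cong\Aut(T_3)$. Hypothesis (2) of Theorem~\ref{thm-intro-main-not-loc-fg} then fails for $G$ itself, since $K$ is closed and normal but neither discrete nor open. You must instead apply the theorem to $\overline G=G/K\le\Aut(T_{min})$, checking (P), minimality, absence of fixed ends and non-local finite generation for $\overline G$ rather than for $\Aut(T)$. This gives $KH$ virtually contained in $K\Gamma$, and only then does Proposition~\ref{prop-latt-cpct-by-discrete} finish.

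\emph{Discrete or compact-by-discrete $G$ with infinitely many ends.} This can happen, e.g.\ a Cayley tree of a free group decorated with pendant paths of distinct lengths so that $\Aut(T)$ is discrete. Then Lemma~\ref{lem-tree-not-loc-fg} does not apply and hypothesis (3) fails. This case must go straight to Proposition~\ref{prop-latt-cpct-by-discrete}; there $KH$ is automatically virtually contained in $K\Gamma$, since $K\Gamma$ has finite index in $G$.

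\emph{The right dichotomy.} It is the paper's: $G$ compact-by-discrete or not. In the latter case $T$ has infinitely many ends, $\Gamma$ and hence $G$ fixes no end, and $\overline G$ is non-discrete.

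\emph{Your justification of (3).} A vertex stabiliser does not surject onto a product of symmetric groups of spheres: the induced groups are iterated wreath products, and consecutive levels are not independent. What is true, and suffices, is that the signature homomorphism to $\prod_n\{\pm1\}$ has infinite image.
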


That theorem in particular yields a positive answer to the aforementioned open problem of Question \ref{quest-intro} for $G = \Aut(T)$ and $\Gamma, \Gamma'$ cocompact: 

\begin{Corollary-intro}
Any two cocompact lattices of $\Aut(T)$ with the same commensurator are commensurable.
\end{Corollary-intro}

Another situation where Theorem \ref{thm-intro-main-not-loc-fg} (or rather Theorem \ref{thm-main-not-loc-fg}) can be applied concerns groups of automorphisms of right angled buildings. Every graph product $\Gamma_P$, associated to a right-angled Coxeter system $(W,S)$ and a family $P = (P_s)_{s \in S}$ of finite groups, naturally sits as a cocompact lattice in the automorphism group of a right-angled building $X$ of type $(W,S)$. We will denote by $\Aut(X)$  the group of all automorphisms of $X$ (i.e.\ automorphisms of the chamber graph of $X$), and by $\Aut_0(X)$ the group of type-preserving automorphisms of $X$. $\Aut_0(X)$ is a closed and cocompact subgroup of $\Aut(X)$, which in general is of infinite index in $\Aut(X)$. This setting yields an extensive family of tdlc groups and cocompact lattices which fall into the general setting we consider. Caprace showed that for $X$ thick, irreducible and of non-spherical type, $\Aut_0(X)$ is abstractly simple  \cite{Cap-RAB} (see also Haglund--Paulin \cite{Hag-Pau-simplicite} for earlier results in the related context of CAT(0) cube complexes, as well as Lazarovich \cite{Lazarovich-cube-complexes-CMH}).  Haglund \cite{Hag-08-GeoDe} and Kubena--Thomas \cite{KubTho} showed the commensurator of $\Gamma_P$ in   $\Aut(X)$ is dense in  $\Aut(X)$. Despite these similarities, this setting also exhibits notable differences with the one of automorphism groups of trees. Beyond the fact that graph products of finite groups form a much richer class of groups than cocompact lattices in automorphism groups of trees (which are all virtually free groups), one remarkable difference is that here the commensurator of $\Gamma_P$ in   $\Aut(X)$ might actually coincides with the abstract commensurator of  $\Gamma_P$. This is the case when the defining graph of $(W,S)$ is a cycle of length $|S| \geq 5$ and $(|P_s|)_{s \in S}$ is constant equal to some integer at least $3$, as a consequence of the Mostow rigidity type theorem proven by Bourdon \cite{Bourdon-Mostow-GAFA97}. Another difference is that not all cocompact lattices in $\Aut(X)$ are conjugate in $\Aut(X)$ up to commensurability. A thorough study of when this holds is carried out by Haglund \cite{Hag-08-GeoDe} and Shepherd \cite{Shepherd-commens-RAB}. In that setting we show:

\begin{TheoremIntro} \label{thm-intro-main-RAB}
	Let $X$ be the semi-regular right angled building associated to a graph product of finite groups $\Gamma_P$, and let $G = \Aut(X)$. Suppose that $X$ is thick and irreducible. Then, up to commensurability, $\Gamma_P$ is the only infinite finitely generated commensurated subgroup of $\Comm_G(\Gamma_P)$. 
\end{TheoremIntro}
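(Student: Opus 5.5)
The plan mirrors the deduction of Theorem~\ref{thm-intro-main-trees}: apply Theorem~\ref{thm-main-not-loc-fg} (the general form of Theorem~\ref{thm-intro-main-not-loc-fg}) to $G=\Aut(X)$, $\Gamma=\Gamma_P$ and $C=\Comm_G(\Gamma_P)$, and then upgrade ``virtually contained in $\Gamma_P$'' to ``commensurable with $\Gamma_P$''. The hypotheses of the setting hold as follows.
\begin{itemize}
\item $\Gamma_P$ is a finitely generated cocompact lattice, since it acts simply transitively on chambers.
\item $C$ is dense in $G$ by Haglund \cite{Hag-08-GeoDe} and Kubena--Thomas \cite{KubTho}.
\end{itemize}
I then check the three hypotheses of Theorem~\ref{thm-intro-main-not-loc-fg} in turn.

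For (\ref{item-faith-CayAb}), I take $U$ to be the stabiliser of a chamber. Its normal core fixes every chamber, because $G$ acts transitively on chambers (indeed $\Gamma_P$ does). Hence the core is trivial.

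For (\ref{item-simplicity-assumption}), I use Caprace's theorem \cite{Cap-RAB}: $\Aut_0(X)$ is abstractly simple when $X$ is thick, irreducible and non-spherical. Non-sphericity follows from thickness and irreducibility once $|S|\ge 2$; the spherical case would make $X$ finite and $G$ compact, which should be excluded or handled trivially. Let $N\trianglelefteq G$ be closed. Then $N\cap \Aut_0(X)$ is normal in $\Aut_0(X)$, so it is either $1$ or $\Aut_0(X)$.
\begin{itemize}
\item If it is $\Aut_0(X)$, then $N$ contains an open subgroup, since $\Aut_0(X)$ is open in $G$ (it is the kernel of the action on types, up to the finite group of diagram automorphisms). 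So $N$ is open.
\item If it is trivial, then $N$ embeds in $G/\Aut_0(X)$. The issue is that $\Aut_0(X)$ can have infinite index in $\Aut(X)$. I would argue instead that $N$ commutes with $\Aut_0(X)$, since both are normal with trivial intersection. The centraliser of $\Aut_0(X)$ in $G$ is trivial, because $\Aut_0(X)$ acts with chamber stabilisers having no common fixed data beyond the chamber itself. Hence $N=1$, which is discrete.
\end{itemize}

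For (\ref{item-not-loc-fg}), I need that chamber stabilisers in $\Aut(X)$ are not topologically finitely generated. Here I would use the structure of stabilisers in right-angled buildings. The stabiliser of a chamber surjects onto unrestricted products of symmetric groups on wings, with independence coming from the extension results of Caprace \cite{Cap-RAB}. This yields infinitely many independent quotients of the form $\mathrm{Sym}(q)$, and in particular a continuous surjection onto an infinite product $\prod_n \mathbb{Z}/2$ (via signs). Such a product is not finitely generated, so the stabiliser is not finitely generated either. This is analogous to the case of $\Aut(T)$, and I expect it to be the main obstacle: one must produce independent sign characters on infinitely many spheres of wings. Thickness is needed here, and also when some $|P_s|=2$ and the local symmetric groups are small; in that case I would pass to combinations of wings to get nontrivial sign characters.

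Theorem~\ref{thm-main-not-loc-fg} then gives that every finitely generated commensurated $H\le C$ is virtually contained in $\Gamma_P$. The remaining step is to show that an infinite such $H$ has finite index in $\Gamma_P$, which I would do via Proposition~\ref{prop-latt-cpct-by-discrete} or its analogue. After passing to a finite index subgroup, $H\le\Gamma_P$ is commensurated by $C$. The closure of $C$-conjugates forces a normal structure, and the intended argument runs as follows. Consider the closed subgroup $\overline{H\cdot(C\cap U)}$, or more directly the set of limit points: since $C$ is dense and commensurates $H$, the group $G$ commensurates the closure data. Following the tree case, one shows that $\Gamma_P$ commensurates $H$ and that $H$ has finite index in $\Gamma_P$ because $H$ is infinite, using that $\Gamma_P$ has no infinite commensurated infinite-index finitely generated subgroups invariant under dense $C$. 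This is where the independent result on compact-by-discrete lattices is invoked: the subgroup generated by $H$ and $U$ yields an open subgroup of $G$ containing $H$ cocompactly, and by (\ref{item-simplicity-assumption}) such a subgroup is forced to have finite covolume. This gives commensurability with $\Gamma_P$.
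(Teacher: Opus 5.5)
Your overall plan is the paper's, and your checks of density and of the trivial core of a chamber stabiliser are correct. The plan is to feed $G=\Aut(X)$, $\Gamma_P$ and its commensurator into the non-locally-finitely-generated criterion, then upgrade virtual containment to commensurability. There are, however, two genuine gaps.

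\textbf{First gap: hypothesis~(\ref{item-simplicity-assumption}) of Theorem~\ref{thm-intro-main-not-loc-fg}.} Both branches of your dichotomy rest on false claims about $\Aut_0(X)$.
\begin{itemize}
\item It is closed and cocompact in $\Aut(X)$ but in general of infinite index, hence \emph{not open}.
\item It is in general \emph{not normal}: a chamber-graph automorphism may permute panel types differently at different chambers. For $W=(\Z/2)^{*3}$ the chamber graph is a tree of cliques, and both phenomena occur.
\end{itemize}
So ``$N\supseteq\Aut_0(X)\Rightarrow N$ open'' and ``$N\cap\Aut_0(X)=1\Rightarrow N$ centralises $\Aut_0(X)$'' are unjustified. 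You cannot exclude a non-open $\Res(G)$, or an infinite compact normal subgroup meeting $\Aut_0(X)$ trivially; the paper excludes neither.

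This is why the paper uses Theorem~\ref{thm-main-not-loc-fg}. Its hypothesis~(\ref{item-alternative-normal-subgroups}) only needs every closed normal $N$ to be compact or to contain $\Res(G)$ (Theorem~\ref{thm-normal-subgroups-AutX}). That is proved not by a centraliser argument but as follows. For $U$ compact open, $NU$ is open, commensurated and compactly generated (De Medts--Silva). So $\Gamma_P\cap NU$ is a finitely generated commensurated subgroup of $\Gamma_P$, hence finite or of finite index, and $N$ is compact or cocompact. In exchange, non-local-finite-generation is needed for every closed $J\ge\Res(G)$. This is why Proposition~\ref{prop-RAB-loc-hom} gets the surjection onto $\prod_n\{\pm1\}$ already from $U\cap\Aut_0(X)$. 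Your sketch via Caprace's extension lemma does give this, since those automorphisms are type-preserving.

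\textbf{Second gap: the final upgrade.} Once $H\le\Gamma_P$, you need the group-theoretic fact that an infinite finitely generated commensurated subgroup of an \emph{irreducible} graph product of finite groups has finite index (Proposition~\ref{prop-inside-lattice}). This is the analogue of Proposition~\ref{prop-commens-free} in the tree case, and the same result also drives the first gap's fix.

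Your last paragraph essentially assumes this fact, and the route you suggest fails. Proposition~\ref{prop-latt-cpct-by-discrete} only gives virtual containment in $\Gamma$. There is also no reason for $H$ to be cocompact in $\langle H,U\rangle$, since $U$ does not commensurate $H$. Irreducibility is indispensable here (in $\Gamma_1\times\Gamma_2$ the factor $\Gamma_1$ is a counterexample), yet your proposal never uses it.

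The paper's proof runs as follows.
\begin{itemize}
\item $H$ stabilises no proper residue, by a minimal-rank argument using non-parallel residues.
\item Hence $H$ acts minimally, with finitely many edge orbits, on the Haglund--Paulin tree of residues of types $s\cup s^\perp$ and $S\setminus\{s\}$.
\item So the closure of the image of an $s^\perp$-residue stabiliser, which centralises $P_s$, is cocompact in $\Gamma_P/\!\!/H$.
\item By Proposition~\ref{prop-coc-cpt-normal}, each $P_s$ maps into a compact normal subgroup, which forces $\Gamma_P/\!\!/H$ to be finite.
\end{itemize}
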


As before, the statement implies in particular that any cocompact lattice of $\Aut(X)$ having  the same commensurator as $\Gamma_P$ is commensurable with $\Gamma_P$.

\medskip

As discussed right after the statement, the assumption that the ambient locally compact group $G$ is not locally finitely generated is the most restrictive one in Theorem~\ref{thm-intro-main-not-loc-fg}. We develop a complementary approach that no longer relies on this local condition of $G$. Instead, this second approach is based on local properties of another tdlc group that naturally appears in the present context. Associated to a group $C$ and a commensurated subgroup $\Gamma$, there is a tdlc group $C / \! \! / \Gamma$ and a homomorphism $\tau_{C,\Gamma}: C \to C / \! \! / \Gamma$ with dense image, such that there exists a compact open subgroup $U_\Gamma$ such that $\tau_{C,\Gamma}^{-1}(U_\Gamma) = \Gamma$ and the normal core of $U_\Gamma$ in $C / \! \! / \Gamma$ is trivial. Those properties actually characterize $C / \! \! / \Gamma$ \cite[Lemma 3.6]{Sha-Willis}. The group $C / \! \! / \Gamma$ is called the Schlichting completion of $C$ with respect to $\Gamma$.  If $\mathcal{N}_{C,\Gamma}$ is the collection of finite index normal subgroups $N$ of $\Gamma$ such that $N$ is the intersection of finitely many $C$-conjugates of $\Gamma$, the \textbf{$C$-congruence completion} $\widehat{\Gamma}^C$ of $\Gamma$ is defined as the inverse limit of $\left\lbrace \Gamma / N\right\rbrace $ where $N$ ranges over $\mathcal{N}_{C,\Gamma}$. Then the closure of $\tau_{C,\Gamma}(\Gamma)$ in $C / \! \! / \Gamma$ is a compact open subgroup of $C / \! \! / \Gamma$ isomorphic to $\widehat{\Gamma}^C$.

We recall some terminology. The \textbf{quasi-center} $\QZ(G)$ of a tdlc group $G$ is the set of elements of $G$ whose centralizer is open. The subgroup $\QZ(G)$ is normal in $G$, and $\QZ(G)$ contains every discrete normal subgroup of $G$. The \textbf{discrete residual } $\Res(G)$ of $G$ is the intersection of all open normal subgroups of $G$.   The \textbf{local prime content} of a profinite group $U$ is the set of prime numbers $p$ such that $p$ divides the order of every open subgroup of $U$. Equivalently, $U$ contains an infinite pro-$p$ subgroup.

\begin{TheoremIntro} \label{thm-intro-main-completion}
Let $\Gamma$ be a finitely generated cocompact lattice of a tdlc group $G$, and $C$ a dense subgroup of $G$ such that $\Gamma \leq C \leq \Comm_G(\Gamma)$. Suppose $\QZ(G)$ is discrete, $\Res(G)$ is open, and every abstract normal subgroup of $G$ is contained in $\QZ(G)$ or contains $\Res(G)$. Suppose also the $C$-congruence completion $\widehat{\Gamma}^C$ of $\Gamma$ verifies:
	\begin{enumerate}
		\item \label{item-intro-many-primes} $\widehat{\Gamma}^C$ has infinite local prime content;
		\item \label{item-intro-no-commuting-normal} Any two infinite closed normal subgroups of $\widehat{\Gamma}^C$ have infinite intersection;
		\item \label{item-intro-no-finite-exponent-normal} There is no infinite closed normal subgroup $M$ of $\widehat{\Gamma}^C$ such that $M$ embeds as a closed subgroup of $F^I$ for some finite group $F$ and set $I$.
	\end{enumerate}
	Then every finitely generated commensurated subgroup of $C$ is virtually contained in $\Gamma$.
\end{TheoremIntro}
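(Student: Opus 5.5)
The plan is to translate the statement into a statement about the Schlichting completion $S = C /\!\!/ \Gamma$ and its compact open subgroup $U_\Gamma \cong \widehat{\Gamma}^C$. Write $\tau = \tau_{C,\Gamma}$. Let $H$ be a finitely generated commensurated subgroup of $C$. Since $\tau^{-1}(U_\Gamma) = \Gamma$, the subgroup $H$ is virtually contained in $\Gamma$ if and only if $\tau(H)$ has relatively compact image in $S$. Indeed, if $Q = \overline{\tau(H)}$ is compact, then $Q \cap U_\Gamma$ is open in $Q$, hence of finite index, and pulling back gives $[H : H\cap\Gamma] < \infty$. So the goal is to show $Q$ is compact.

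The bridge between $G$ and $S$ will be the diagonal embedding $C \to G \times S$. Its image contains $\Gamma$ as a discrete subgroup, discrete because the projection to $G$ is discrete. This copy of $\Gamma$ is cocompact in $G \times U_\Gamma$: $\Gamma$ is cocompact in $G$ and dense in $U_\Gamma$, so $\Gamma\backslash(G\times U_\Gamma)$ fibres over $\Gamma\backslash G$ with compact fibres.

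\emph{Step 1.} Let $L$ be the closure of $H$ in $G$. The point is that a finitely generated commensurated subgroup of $C$ produces closed subgroups commensurated by a dense subgroup on both sides. I will use the hypotheses on $G$ (that $\QZ(G)$ is discrete, $\Res(G)$ is open, and every abstract normal subgroup lies in $\QZ(G)$ or contains $\Res(G)$) as follows. Form the abstract normal subgroup of $G$ built from $L$, for instance the subgroup of elements $g$ such that $L\cap gLg^{-1}$ is open in $L$, or the normal closure of a suitable piece of $L$. The dichotomy then reduces to two cases:
\begin{enumerate}
\item $L \cap V$ is finite for a compact open subgroup $V$ of $G$, so $L$ is discrete;
\item $L$ contains an open subgroup of $\Res(G)$, so $L$ is open.
\end{enumerate}
In the first case, $H$ is discrete in $G$ and commensurated by the dense subgroup $C$.

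\emph{Step 2.} Look at $M = Q \cap U_\Gamma$, which is the closure of $\tau(H\cap\Gamma)$ in $\widehat{\Gamma}^C$. Since $H \cap \Gamma$ is commensurated by $\Gamma$, the subgroup $M$ is commensurated by $U_\Gamma$. Passing to a finite-index subgroup and taking cores, I expect to extract a closed normal subgroup $M_0$ of an open subgroup of $\widehat{\Gamma}^C$; by commensurability this can be taken normal in $\widehat{\Gamma}^C$ itself. In parallel, consider the closed normal subgroup $M_1$ coming from the part of $\Gamma$ that centralizes or normalizes $H$ modulo finite. Hypothesis (\ref{item-intro-no-commuting-normal}) then forces either $M_0$ or $M_1$ to be finite, or the two to interact (they have infinite intersection). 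In the first alternative, finiteness of $M_0$ together with Step 1 should give compactness of $Q$ directly.

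\emph{Step 3, the main obstacle.} This is the remaining case, where $M_0$ is infinite and $L$ is open in $G$, and it must be ruled out. Here $H$ is a finitely generated group that is dense in an open subgroup of $G$. Its image $\tau(H)$ then accumulates in $U_\Gamma$ through the finite quotients $\Gamma/N$ with $N \in \mathcal{N}_{C,\Gamma}$. My first attempt will bound the primes dividing the orders of the finite groups obtained from $H$. Because $H$ is finitely generated and $\Gamma$ is cocompact in $G$, the actions of $H$ on the finite coset spaces $C/N$ should come from a fixed finite set of generators, with uniformly controlled local structure. This should force $M_0$ either to have finite local prime content, contradicting hypothesis (\ref{item-intro-many-primes}) via openness of $M_0$, or to embed into a product $F^I$ for a single finite group $F$, contradicting hypothesis (\ref{item-intro-no-finite-exponent-normal}). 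Making this uniformity precise is the delicate part. I would attack it through the cocompact diagonal copy of $\Gamma$ in $G \times U_\Gamma$: a compact generating set of $L$ then has a finite image pattern in $U_\Gamma$ modulo each congruence subgroup.
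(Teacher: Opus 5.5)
Your reduction to relative compactness of $\tau(H)$ in $C/\!\!/\Gamma$ is correct. After that the plan is not yet an argument, and one step is false. In Step 2 you claim that finiteness of $M_0=Q\cap U_\Gamma=\overline{\tau(H\cap\Gamma)}$, together with Step 1, gives compactness of $Q$. But finiteness of the open subgroup $Q\cap U_\Gamma$ of $Q$ only says that $Q$ is \emph{discrete}.

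Example~\ref{ex-intro} realizes exactly this situation. Take $H=\mathbf{H}^1(\Z[1/q])$. Then $H\cap\Gamma=\mathbf{H}^1(\Z)$ is finite, so $\tau(H)$ is infinite (as $\ker\tau\le\Gamma$) and discrete (as $\tau(H)\cap U_\Gamma=\tau(H\cap\Gamma)$). Moreover $\overline{H}$ is open in $G$ by strong approximation. In this example $G$ satisfies every hypothesis you use in Step 1, and hypotheses (1) and (3) hold. So this case can only be excluded through (2).

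Hypothesis (2) does not \emph{force one of $M_0,M_1$ to be finite}. It can only be used by exhibiting two infinite closed normal subgroups of $\widehat{\Gamma}^C$ with finite intersection, and you never produce such a pair. Your $M_0$ need not even be normal or have a large core. A closed subgroup of a profinite group commensurated by a dense subgroup need not be commensurated by the whole group, and its core can be trivial: take $\prod_n C_2\le\prod_n S_3$, commensurated by $\bigoplus_n S_3$. Likewise, the set you propose in Step 1 is not a normal subgroup. The right object is $\Res(\overline{H})$, which is normal in $G$ because $C$ commensurates $\overline H$. It only yields that $\overline H$ is compact-by-discrete or open.

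The missing ingredients are those on which the paper's proof (Theorem~\ref{thm-combined-arguments} via Theorem~\ref{thm-local-contraint-completion}) rests.
\begin{enumerate}
\item Replace $H$ by the join $\Lambda=\langle\tau^{-1}(U_\Gamma),H'\rangle$ of Theorem~\ref{thm:commensurated_join}. Then $E=\overline{\tau(\Lambda)}$ is open, compactly generated and commensurated in $C/\!\!/\Gamma$, so $\Res(E)$ is normal there. Hypothesis (2) is applied to the pair $U_\Gamma\cap\Res(E)$ and $\Core_E(U_\Gamma)$.
\item The \emph{uniformity} you seek in Step 3 is the Caprace--Reid--Willis fact that $\lpc(U_\Gamma/\Core_E(U_\Gamma))$ is finite for compactly generated $E$. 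Combined with (1), it shows $\Core_E(U_\Gamma)$ is infinite. So (1) is used on this core, not on $M_0$, which need not be open.
\item The heart of the proof is that one needs $\Core_E(U_\Gamma)\cap\Res(E)$ finite. This intersection lies in $\RadLE(C/\!\!/\Gamma)$, and proving that radical finite is where the hypotheses on $G$ really enter. If $G$ is not locally finitely generated, one concludes directly by Theorem~\ref{thm-intro-main-not-loc-fg}. Otherwise Proposition~\ref{prop:LE-radical-restriction} shows the radical is compact; its proof uses ascending cocompact lattices converging in $\Sub(G)$, a pro-$p$ locally normal subgroup from Proposition~\ref{prop-approx-disc-subgroups}, and a Frattini argument. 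Then (3) together with Lemma~\ref{lem-faith-Cay-Ab-variety} makes the radical finite.
\item Now (2) forces $\Res(E)$ to be discrete, so $E$ has a compact open normal subgroup $V$. Then $H$ normalizes $\tau^{-1}(V)$, and Proposition~\ref{prop-latt-fi-normalizer} finishes.
\end{enumerate}
None of these steps appears in your plan.
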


We prove the theorem under a weaker requirement than (\ref{item-intro-no-commuting-normal}), which consists in asking that condition only for certain normal subgroups. See Theorem \ref{thm-combined-arguments}. 

Observe that the assumption on normal subgroups of $G$ was made on closed normal subgroups in Theorem~\ref{thm-intro-main-not-loc-fg}, while here it concerns abstract normal subgroups (i.e.\ all normal subgroups). The assumptions on normal subgroups here are stronger than those of Theorem~\ref{thm-intro-main-not-loc-fg}, but again much less stringent than simplicity.

Assumption (\ref{item-intro-no-commuting-normal}) is key in the theorem, as the following illustrative  example shows. This example also shows Theorem \ref{thm-intro-main-not-loc-fg} fails without the assumption of local infinite generation of $G$. 

\begin{Example-intro} \label{ex-intro}
  Let $G = \mathbf{H}^1(\Q_p)$ be the group of norm one quaternions over $\Q_p$ for an odd prime $p$. By quaternion we mean the quaternion algebra associated to the parameters $(-1,-1)$. Recall that for an odd prime number the group of norm one quaternions over $\Q_p$ is isomorphic to $\mathrm{SL}(2,\Q_p)$ (which satisfies the requirements of Theorem \ref{thm-intro-main-completion}). The subgroup $\Gamma = \mathbf{H}^1(\Z[1/p])$ is a cocompact lattice of $G$, and $C = \mathbf{H}^1(\Q)$ is a dense subgroup of $G$ that contains and commensurates $\Gamma$. As follows from \cite[Theorem 4.3]{Vigneras-livre-quaternions} and \cite[Lemma 3.6]{Sha-Willis}, the Schlichting completion $C / \! \! / \Gamma$ is isomorphic to the restricted product of the family of groups $\left\lbrace \mathrm{SL}(2,\Q_q) \right\rbrace$ and compact open subgroups $\left\lbrace \mathrm{SL}(2,\Z_q) \right\rbrace$, where $q$ ranges over odd primes different from $p$. So here the $C$-congruence completion $\widehat{\Gamma}^C$ is isomorphic to the direct product $\prod_q \mathrm{SL}(2,\Z_q)$. Hence among the local conditions in Theorem \ref{thm-intro-main-completion}, here (\ref{item-intro-many-primes}) and (\ref{item-intro-no-finite-exponent-normal}) are satisfied, but (\ref{item-intro-no-commuting-normal}) is not. Moreover, the conclusion of the theorem fails here, as $C$ admits infinitely many commensurability classes of  finitely generated commensurated subgroups that are not virtually contained in $\Gamma$, namely $\mathbf{H}^1(\Z[1/\pi])$ for every non-empty finite set of odd primes $\pi$.
\end{Example-intro}

Identifying the profinite group $\widehat{\Gamma}^C$ is a delicate problem. We  say that a commensurated subgroup $\Gamma$ of a group $C$ has the congruence subgroup property (CSP) in $C$ if every finite index subgroup of $\Gamma$ contains an element of $\mathcal{N}_{C,\Gamma}$. Equivalently, the natural surjective homomorphism $\widehat{\Gamma} \to \widehat{\Gamma}^C$ is an isomorphism, where $\widehat{\Gamma}$ is the profinite completion of $\Gamma$. The idea of considering CSP in a setting where $\Gamma$ is a lattice in a group $G$ that is not necessarily algebraic  was suggested by Lubotzky \cite{Mozes-CSP-tree}. When $\Gamma$ is an arithmetic lattice in a simple algebraic group and $C = \Comm_G(\Gamma)$, this notion coincides with the classical one \cite[Proposition 1.1]{Mozes-CSP-tree}. The interest of this notion in the context of Theorem \ref{thm-intro-main-completion} is that in certain situations, enough is known on $\Gamma$ (as an abstract group, independently on the way $\Gamma$ sits inside $G$) to ensure that its profinite completion $\widehat{\Gamma}$ satisfies the conditions (\ref{item-intro-many-primes}), (\ref{item-intro-no-commuting-normal}), (\ref{item-intro-no-finite-exponent-normal}) from  Theorem \ref{thm-intro-main-completion}. This is for instance the case if $G$ is a closed and cocompact subgroup of the automorphism group of an infinitely ended tree. In that situation $\Gamma$ admits a finite index subgroup that is a non-abelian free group, and $\widehat{\Gamma}$ admits a finite index subgroup that is a non-abelian free profinite group. This guarantees  (\ref{item-intro-many-primes}), (\ref{item-intro-no-commuting-normal}), (\ref{item-intro-no-finite-exponent-normal}) hold. More generally, if $\Gamma$ is Gromov-hyperbolic and virtually special, then $\widehat{\Gamma}$ satisfies the conditions needed in Theorem \ref{thm-combined-arguments} (see Proposition \ref{prop-completion-hyp-special}). So for $\Gamma, C, G$ as in Theorem \ref{thm-intro-main-completion} and for $\Gamma$ hyperbolic and virtually special, we therefore obtain a criterion (conditionally to the fact that $\Gamma$ has CSP in $C$) ensuring every finitely generated commensurated subgroup of $C$ is virtually contained in $\Gamma$ (see Corollary \ref{cor-hyp-special-CSP}). 

We effectively manage to implement this criterion for certain closed subgroups of automorphism groups of trees. Mozes showed that when $G = \mathrm{Aut}(T_d)$ is the full automorphism group of a regular tree $T_d$ and $\Gamma$ is a cocompact lattice in $G$, then $\Gamma$ has the CSP in $\Comm_G(\Gamma)$ \cite[Theorem 1.2]{Mozes-CSP-tree}. Mozes' argument can be generalized to show that the same holds true within the subgroup $U(F)$ of $\mathrm{Aut}(T_d)$ consisting of automorphisms that have local action prescribed by the finite permutation group $F$; see Theorem \ref{thm-CSP-U(F)} below. For this family of groups we prove:

\begin{TheoremIntro} \label{thm-intro-U(F)}
Let $d \geq 3$, $F \leq \mathrm{Sym}(d)$ and $\Gamma$ a cocompact lattice of $G = U(F)$. Then, up to commensurability, $\Gamma$ is the only infinite finitely generated commensurated subgroup of $\Comm_G(\Gamma)$.
\end{TheoremIntro}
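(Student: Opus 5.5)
The plan is to deduce the statement from Theorem \ref{thm-intro-main-completion}, or rather its refined form Theorem \ref{thm-combined-arguments} through Corollary \ref{cor-hyp-special-CSP}, and then to upgrade "virtually contained in $\Gamma$" to "commensurable with $\Gamma$". First, reduce to the case where $U(F)$ is the natural ambient group. If $U(F)$ is discrete, for instance when $F$ acts freely, then $\Gamma$ has finite index in $\Comm_G(\Gamma)$. In that case every infinite finitely generated commensurated subgroup of $\Comm_G(\Gamma)$ is an infinite subgroup of a virtually free group that is commensurated. Since it is commensurated, its limit set is invariant under a finite index overgroup, so it is commensurable with $\Gamma$; this is the standard fact that an infinite commensurated subgroup of a non-elementary hyperbolic group has finite index. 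So assume $U(F)$ is non-discrete. By Theorem \ref{thm-CSP-U(F)}, $\Gamma$ has CSP in $C=\Comm_G(\Gamma)$, so $\widehat{\Gamma}^C=\widehat{\Gamma}$. Since $\Gamma$ is virtually a non-abelian free group, it is hyperbolic and virtually special, and Proposition \ref{prop-completion-hyp-special} gives the local hypotheses on $\widehat{\Gamma}$.

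Second, check the global hypotheses on $G$. Density of $C$ in $U(F)$ should follow by adapting the Bass--Kulkarni/Liu argument, or it may be proven in the paper alongside Theorem \ref{thm-CSP-U(F)}. For the normal subgroup conditions, use that $U(F)$ satisfies Tits' independence property. Let $U(F)^+$ be the subgroup generated by edge stabilizers (or pointwise stabilizers of pairs of adjacent edges). Tits' theorem says $U(F)^+$ is abstractly simple, and it is open and of finite index when $F$ is transitive and generated by point stabilizers. In general I would pass to the suitable characteristic open subgroup $\Res(G)$ and argue as follows. $\QZ(G)$ is discrete, since a non-trivial quasi-central element would centralize an open subgroup fixing a ball, which is impossible under the independence property in a non-discrete group. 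Any normal subgroup not contained in $\QZ(G)$ then acts non-trivially on the boundary and contains $\Res(G)$ by Tits' simplicity argument. The case distinctions here, where $F$ is not transitive or not generated by point stabilizers, are where I expect the main obstacle. There I would replace $U(F)$ by a suitable closed cocompact subgroup, such as $U(F)^+$ or the subgroup preserving a bipartition, that contains a finite index subgroup of $\Gamma$ and still satisfies all hypotheses. This replacement is legitimate because both commensurated subgroups and virtual containment pass to finite index subgroups.

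Third, conclude. Corollary \ref{cor-hyp-special-CSP} gives that every finitely generated commensurated subgroup $H$ of $C$ is virtually contained in $\Gamma$. So a finite index subgroup $H_0\le\Gamma$ is infinite, and it is commensurated by $C\supseteq\Gamma$, hence by $\Gamma$. Because $\Gamma$ is virtually free and non-elementary, an infinite finitely generated subgroup commensurated by $\Gamma$ has finite index in $\Gamma$. To see this, note that $H_0$ is quasiconvex by Howson/Marshall Hall, so its limit set is $\Gamma$-invariant, hence everything, and a quasiconvex subgroup with full limit set has finite index. Therefore $H$ is commensurable with $\Gamma$, which proves the claim.
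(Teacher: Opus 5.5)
Your proposal is correct and follows the paper's proof of Theorem \ref{thm-main-U(F)}. The paper handles the free case by Proposition \ref{prop-commens-free}, and otherwise applies Theorem \ref{thm-combined-arguments} through Corollary \ref{cor-combined-arguments-tree}, using Proposition \ref{prop-dense-comm-U(F)}, Theorem \ref{thm-Tits} and Theorem \ref{thm-CSP-U(F)}. There are two differences. The paper checks the conditions on $\widehat{\Gamma}$ directly from its being virtually a free profinite group, rather than through the heavier Proposition \ref{prop-completion-hyp-special}. It also finishes with M.\ Hall's theorem rather than limit sets.

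Two corrections are needed.

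\textbf{The case analysis on $F$ is unnecessary.} Theorem \ref{thm-combined-arguments} only requires $\Res(G)$ to be open, not of finite index. For every non-free $F$, Tits' theorem applied to $U(F)$ itself gives $\Res(G)=U(F)^+$, which is open. So condition \ref{item-normal} holds without any case distinction on $F$. Your fallback of replacing $U(F)$ by $U(F)^+$ would in fact be illegitimate in general. It is only justified for finite-index replacements, and $U(F)^+$ can have infinite index, for instance when $d=3$ and $F=\langle(1\,2)\rangle$. In that case $U(F)^+$ is not cocompact, so it contains no finite index subgroup of $\Gamma$, and it changes the commensurator.

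\textbf{The ``standard fact'' in your first paragraph is false as stated.} Consider a closed hyperbolic $3$-manifold that fibres over the circle. Its fibre subgroup is finitely generated, infinite and normal, but of infinite index in a non-elementary hyperbolic group. The correct statement is that an infinite \emph{quasiconvex} commensurated subgroup has finite index. Commensuration makes its limit set $\Gamma$-invariant, hence equal to all of $\partial\Gamma$, and quasiconvexity then gives finite index. (In your last paragraph the invariance comes from commensuration, not from quasiconvexity.) Finitely generated subgroups of virtually free groups are quasiconvex, so that argument also covers the discrete case.
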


\noindent 
\textbf{Organization.} Section \ref{sec-prelim} provides some preliminary material. Section \ref{sec-join} establishes a key tool for the rest of the paper. The main result there, which might potentially be of interest in other contexts involving commensurated subgroups, defines a join operation on (commensurability classes of) finitely generated commensurated subgroups of a given group. See Theorem~\ref{thm:commensurated_join} and Corollary \ref{cor:commensurated_join}. The proofs of the two general results stated in this introduction, Theorem \ref{thm-intro-main-not-loc-fg} and  Theorem \ref{thm-intro-main-completion}, are given respectively in Section \ref{sec-not-loc-fg} and Section \ref{sec-local-assumption-completion}. The applications to automorphism groups of trees and buildings are carried out respectively in Section \ref{sec-trees} and Section \ref{sec-buildings}. 

\bigskip

\noindent 
\textbf{Notation.} Everywhere in the paper, whenever $C$ is a group and $H$ is a subgroup of $C$, we write $[H]$ for the commensurability class of $H$ in $C$, i.e.\ the collection of all subgroups of $C$ commensurable with $H$. This notation does not make $C$ appear, but the group $C$ will always be clear from the context.

\bigskip

\noindent 
\textbf{Acknowledgments.} We thank Pierre-Emmanuel Caprace for interesting comments on this work.

\section{Preliminaries} \label{sec-prelim}

\subsection{On the discrete residual}

The relevance of the discrete residual $\Res(G)$ of a tdlc group $G$ will appear repeatedly in the paper, notably (but not only) through the following result, the main contribution of which is the equivalence between (\ref{item-res-triv}) and (\ref{item-SIN}) due to Caprace--Monod. 

\begin{Theorem} \label{thm-res-disc-res-triv}
	For $G$ a compactly generated tdlc group, the following are equivalent: \begin{enumerate}
		\item \label{item-res-disc} $\Res(G)$ is a discrete subgroup of $G$; \item \label{item-res-triv} $\Res(G)$ is trivial; \item \label{item-SIN}  The compact open normal subgroups form a basis of neighborhoods of $1$ in $G$. 
	\end{enumerate}
\end{Theorem}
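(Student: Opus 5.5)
We prove the cycle of implications (\ref{item-SIN}) $\Rightarrow$ (\ref{item-res-triv}) $\Rightarrow$ (\ref{item-res-disc}) $\Rightarrow$ (\ref{item-SIN}). The two outer implications are elementary; the real content is the Caprace--Monod step, which we invoke rather than reprove.

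\emph{(\ref{item-SIN}) $\Rightarrow$ (\ref{item-res-triv}).} A compact open normal subgroup is in particular an open normal subgroup, so $\Res(G)$ is contained in every member of the basis of compact open normal subgroups. Since $G$ is Hausdorff, these have trivial intersection, hence $\Res(G)=\{1\}$.

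\emph{(\ref{item-res-triv}) $\Rightarrow$ (\ref{item-res-disc}).} This is immediate.

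\emph{(\ref{item-res-disc}) $\Rightarrow$ (\ref{item-SIN}).} We argue in two steps.
\begin{enumerate}
\item We first show that discreteness of $\Res(G)$ forces $\Res(G)=\{1\}$. By the Caprace--Monod theorem on compactly generated tdlc groups, we have $\Res(\Res(G))=\Res(G)$. Now $\Res(G)$ is a closed normal subgroup of $G$, and it is assumed discrete. A discrete group has the trivial subgroup as an open normal subgroup, so its discrete residual is trivial. Hence $\Res(G)=\Res(\Res(G))=\{1\}$.
\item Once $\Res(G)=\{1\}$, we apply the Caprace--Monod characterization: for a compactly generated tdlc group, trivial discrete residual is equivalent to $G$ being residually discrete, and this in turn is equivalent to the compact open normal subgroups forming a basis of identity neighborhoods. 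In the paper this would simply be cited.
\end{enumerate}

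If a self-contained sketch of the last equivalence were wanted, it would go as follows. Fix a compact open subgroup $U$. Suppose $\bigcap_N N=1$, where $N$ ranges over open normal subgroups. Then by compactness of $U$ (applied to the closed sets $(N\cap U)\cdot$ complement, or via a standard compactness argument), for any open $V\le U$ there is an open normal subgroup $N$ with $N\cap U\le V$. The main obstacle is producing a compact open normal subgroup inside $N\cap U$. Here compact generation enters: $G/N$ is a discrete, finitely generated group, and $U$ has image in it that is finite. One must then show that some open normal subgroup can be chosen to have compact image, which is the genuinely hard Caprace--Monod argument; they use the structure of compactly generated groups acting on Cayley--Abels graphs. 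We would not attempt to reprove this, and would cite it.
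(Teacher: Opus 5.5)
The implications (3)$\Rightarrow$(2)$\Rightarrow$(1) are fine, and citing Caprace--Monod for (2)$\Rightarrow$(3) is exactly what the paper does. The gap is Step 1 of your (1)$\Rightarrow$(3). Caprace--Monod do not prove that $\Res(\Res(G))=\Res(G)$ for compactly generated tdlc groups, and in fact this identity is false.

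Take $G=\Q_p\rtimes\langle t\rangle$ with $t$ acting by multiplication by $p$. It is compactly generated, by $\Z_p\cup\{t\}$.
\begin{itemize}
\item An open normal subgroup of $G$ contains some $p^n\Z_p$. Hence it contains all the conjugates $t^{-k}p^n\Z_p t^{k}=p^{n-k}\Z_p$, and so it contains $\Q_p$.
\item Since $G/\Q_p\cong\Z$ is discrete, $\Q_p$ is itself open normal, so $\Res(G)=\Q_p$.
\item But $\Q_p$ is abelian and its open subgroups $p^n\Z_p$ intersect trivially, so $\Res(\Res(G))=\{1\}$.
\end{itemize}
So (1)$\Rightarrow$(2) is left unproved in your proposal. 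This is precisely the implication the paper does not take from Caprace--Monod, but cites separately (Reid, Theorem G, or Caprace--Le Boudec, Proposition 2.2).

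The missing idea is to apply Caprace--Monod to the quotient $G/R$, where $R=\Res(G)$, rather than to $R$ itself.

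First, set up the quotient. Every open normal subgroup of $G$ contains $R$, so $\Res(G/R)=\{1\}$. Since $G/R$ is compactly generated, its compact open normal subgroups form a basis at $1$.

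Next, choose the subgroups.
\begin{itemize}
\item Pick a compact open $U_1\le G$ with $U_1\cap R=\{1\}$; this is possible because $R$ is discrete.
\item Pick a compact open normal subgroup $M/R\le U_1R/R$ of $G/R$, and put $U=M\cap U_1$. Then $M=UR$.
\item For each compact open normal subgroup $M_k/R\le M/R$ of $G/R$, set $U_k=M_k\cap U$.
\end{itemize}
Each $U_k$ is normal in $U$. The quotient map restricts to a homeomorphism $U\to M/R$ carrying $U_k$ onto $M_k/R$, so the $U_k$ form a basis of neighbourhoods of $1$ in $U$.

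Now use compact generation. Write $G=\langle U\cup S\rangle$ with $S$ finite and symmetric, and pick $k$ with $U_k\le\bigcap_{s\in S}s^{-1}Us$. For $s\in S$ this gives $sU_ks^{-1}\le U\cap M_k=U_k$. By symmetry of $S$, $U_k$ is normalized by both $U$ and $S$. Hence $U_k$ is a compact open normal subgroup of $G$.

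Therefore $R\le U_k$, and so $R=R\cap U_k\le R\cap U=\{1\}$. This is where compact generation genuinely enters (1)$\Rightarrow$(2), and with this step your cycle of implications closes.
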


\begin{proof}
See \cite[Theorem G]{Reid-distal} or \cite[Proposition 2.2]{CLB-covol} for (\ref{item-res-disc}) $\Leftrightarrow$ (\ref{item-res-triv}), and \cite[Corollary~4.1]{CaMo-decomp} for  (\ref{item-res-triv}) $\Leftrightarrow$ (\ref{item-SIN}). 
\end{proof}

Although very simple, the following observation will be particularly useful. 

\begin{Lemma} \label{lem-commens-normalizes-res}
	Let $G$ be a tdlc group, and $L$ a closed subgroup of $G$. Then: \begin{enumerate}
		\item If $L_1$ is a finite index open subgroup of $L$, then $\Res(L_1) = \Res(L)$.
		\item The commensurator of $L$ in $G$ normalizes $\Res(L)$. 
	\end{enumerate} 
\end{Lemma}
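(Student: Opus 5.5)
We prove (1) first and deduce (2) from it.

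For (1), let $L_1$ be a finite index open subgroup of $L$; here $\Res$ of a closed subgroup means the intersection of its open normal subgroups in its own topology. First, every open normal subgroup $N$ of $L$ gives an open normal subgroup $N \cap L_1$ of $L_1$. Conversely, if $M$ is an open normal subgroup of $L_1$, its normal core $\bigcap_{x \in L} xMx^{-1}$ in $L$ is a finite intersection, since $L_1$ has finite index and so $M$ has only finitely many $L$-conjugates. Each conjugate $xMx^{-1}$ is open in $L$ (because $L_1$ is open in $L$), so this core is an open normal subgroup of $L$ contained in $M$.

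Hence the open normal subgroups of $L$ contained in $L_1$ are cofinal among the open normal subgroups of $L_1$. Every open normal subgroup $N$ of $L$ contains such a one, namely $N\cap L_1$ (which is open and normal in $L$, as $L_1$ can be replaced by its core, open normal of finite index). Therefore both residuals equal the intersection of the open normal subgroups of $L$ contained in $L_1$, giving $\Res(L_1)=\Res(L)$.

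For (2), let $g \in G$ commensurate $L$, and set $L_1 = L \cap gLg^{-1}$. Commensuration means $L_1$ has finite index in both $L$ and $gLg^{-1}$. We also need $L_1$ to be open in each of them: it is closed, since $L$ and $gLg^{-1}$ are closed, and a closed subgroup of finite index is open, because its complement is a finite union of closed cosets.

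Applying (1) twice gives $\Res(L) = \Res(L_1) = \Res(gLg^{-1})$. Conjugation by $g$ is a topological isomorphism $L \to gLg^{-1}$, so $\Res(gLg^{-1}) = g\Res(L)g^{-1}$. Hence $g$ normalizes $\Res(L)$.

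The only delicate point is the openness bookkeeping, namely that the conjugates and the finite intersections remain open. This is routine once $L_1$ is known to be open of finite index.
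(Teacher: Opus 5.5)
Your proof is correct and follows the paper's argument: in (1) the key step is the same, namely that the $L$-core of an open normal subgroup of $L_1$ is a finite intersection of open conjugates and hence an open normal subgroup of $L$, and (2) is obtained by applying (1) to $L\cap gLg^{-1}$ and then conjugating by $g$. Two small remarks: $N\cap L_1$ itself need not be normal in $L$, so replacing $L_1$ by its core, as in your parenthetical, is genuinely needed; and your observation that $L\cap gLg^{-1}$ is open (being closed of finite index) supplies a point the paper leaves implicit.
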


\begin{proof}
The inclusion $\Res(L_1) \leq \Res(L)$ follows from the definition. For the converse, let $O$ be an open normal subgroup of $L_1$. Then $O$ is open in $L$, and has finitely many $L$-conjugates. Their intersection $O'$ is an open normal subgroup of $L$, so $\Res(L) \leq O' \leq O$. Hence $\Res(L) \leq \Res(L_1)$. 

For $g$ in the commensurator of $L$ in $G$, we have $\Res(g^{-1} L g \cap L) = \Res(L \cap g L g^{-1}) = \Res(L)$ by the first point. Hence conjugation by $g$ stabilizes $\Res(L)$. 
\end{proof}

\subsection{On compact normal subgroups}

The following follows from \cite[Theorem 5.5]{Wang-compactness}. 

\begin{Proposition}\label{prop-coc-cpt-normal}
	Let $G$ be a locally compact group and $H$ a closed cocompact subgroup. Then every compact normal subgroup of $H$ is contained in a compact normal subgroup of $G$. 
\end{Proposition}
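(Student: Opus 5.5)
The plan is to take the union $K$ of all compact normal subgroups of $G$ that contain some compact normal subgroup of $H$, or, more concretely, to build the required subgroup directly from a given compact normal subgroup $M$ of $H$. We fix such an $M$ and consider the set of its $G$-conjugates $gMg^{-1}$. Since $M$ is normalized by $H$, the conjugate $gMg^{-1}$ depends only on the coset $gH$. Cocompactness of $H$ then means the conjugates are parametrized by the compact space $G/H$, and this compactness is what we will exploit.

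\emph{Step 1.} Pick a compact set $Q \subseteq G$ with $G = QH$. This is possible because $G/H$ is compact and $G \to G/H$ is open, so finitely many translates of a compact neighborhood of $1$ cover $G/H$. The compactness of this $Q$ is the key input.

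\emph{Step 2.} Every conjugate $gMg^{-1}$, for $g = qh$ with $q \in Q$, $h \in H$, equals $qMq^{-1}$. Hence
\[
\bigcup_{g\in G} gMg^{-1} = \bigcup_{q\in Q} qMq^{-1} \subseteq QMQ^{-1}.
\]
The right-hand side is compact, being a continuous image of $Q\times M\times Q$.

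\emph{Step 3.} Let $N$ be the subgroup generated by $\bigcup_g gMg^{-1}$. It is normal in $G$ and contains $M$, but a group generated by a compact set need not be relatively compact. So we must show that the closure $\overline{N}$ is compact. This is the main obstacle, and it is exactly the content of \cite[Theorem 5.5]{Wang-compactness}.

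\emph{Step 4.} To handle Step 3 directly, I would first try an approach via small neighborhoods. Assume we can reduce to the case where $G$ has a compact open subgroup, or more generally use that $M$ has small conjugation-invariant neighborhoods; for this, use that $M$ is compact. The key observation is that the map $G\times M\to G$, $(g,m)\mapsto gmg^{-1}$, is continuous, and that $S=\bigcup_g gMg^{-1}$ is a compact $G$-invariant set. For a tdlc group, take a compact open subgroup $U$. Then $S\subseteq U'$ for some compact open subgroup $U'$ which is normalized by... — here is the difficulty. In general one cannot expect an open subgroup containing $S$ to be normal.

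\emph{Step 5.} The fallback, and the route I would actually follow, is to invoke Wang's theorem as stated. A closed normal subgroup generated by a compact invariant set consisting of elements that generate relatively compact subgroups in a controlled, uniformly bounded way is compact. Concretely, define $\overline{N}$ and show it is compact by proving it is the closure of an increasing union of compact groups $M_1 M_2\cdots M_k$, where the $M_i$ are conjugates. Each such finite product group is compact because pairwise the conjugates normalize... — this is not automatic either. So I will simply cite \cite[Theorem 5.5]{Wang-compactness}, which asserts that in a locally compact group, the closed subgroup generated by a compact normal-invariant family of compact subgroups, each normalized by a cocompact subgroup, is compact.

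With Step 5, $\overline{N}$ is the desired compact normal subgroup of $G$ containing $M$. Steps 1–2 are routine, and Step 3 is where all the difficulty lies.
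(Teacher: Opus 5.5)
Your proposal ends up where the paper does: the paper's entire proof is the citation of \cite[Theorem 5.5]{Wang-compactness}, and your Steps 1--3 are a correct reduction to that result (writing $G = QH$ with $Q$ compact, all $G$-conjugates of $M$ lie in the compact set $QMQ^{-1}$, so what remains is compactness of the closed normal subgroup they generate). The attempts in Steps 4--5 are abandoned and add nothing, so delete them; what is left, the reduction plus the citation, is essentially the paper's argument.
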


At several places of the paper we will discuss the property that a tdlc group admits a compact open subgroup with trivial normal core. The following lemmas collect basic observations around this.

\begin{Lemma} \label{lem-faith-Cay-Ab-coco}
	Let $G$ be a tdlc group and $L$ a closed  cocompact subgroup. If $G$ has a compact open subgroup with trivial normal core, then so does $L$. 
\end{Lemma}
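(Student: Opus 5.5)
Let $U$ be a compact open subgroup of $G$ with $\Core_G(U) = \{1\}$. We want a compact open subgroup of $L$ with trivial normal core in $L$. The natural candidate is $V = U \cap L$, which is compact and open in $L$. The obstruction is that $\Core_L(V)$ is only normalized by $L$, not by all of $G$. The plan is to use cocompactness of $L$ to show that $\Core_L(V)$ still has only finitely many $G$-conjugates, modulo something controlled.

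First, since $L$ is cocompact and $U$ is open, the double coset space $L\backslash G/U$ is finite. Choose a finite set $F = \{g_1,\dots,g_n\}$ with $G = L F U$, and put
\[
W = \bigcap_{i=1}^n g_i U g_i^{-1}.
\]
This is again a compact open subgroup of $G$, and $W \leq U$, so $\Core_G(W) \leq \Core_G(U)$ is trivial. Now take $V = W \cap L$ and $K = \Core_L(V) = \bigcap_{l \in L} l V l^{-1}$.

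Next we check that $K$ is normalized by $G$. For $g \in G$, write $g = l g_i u$ with $l \in L$ and $u \in U$. Since $K \leq W \leq g_i U g_i^{-1}$, we have $g_i^{-1} K g_i \leq U$. Hence
\[
g^{-1} K g = u^{-1} g_i^{-1} l^{-1} K l g_i u = u^{-1} g_i^{-1} K g_i u \leq U,
\]
using that $K$ is normal in $L$. So $K \leq g U g^{-1}$ for every $g \in G$, that is, $K \leq \Core_G(U) = \{1\}$. Note that this step uses only $U$ and not $W$, so $W$ can be dropped and we may simply take $V = U \cap L$.

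The same computation gives the general fact directly: for $k \in \Core_L(U \cap L)$ and $g = l g_i u$, we have $g^{-1} k g = u^{-1} g_i^{-1} (l^{-1} k l) g_i u$, and $l^{-1} k l \in \Core_L(U\cap L)$. So it suffices to know $g_i^{-1} \Core_L(U\cap L)\, g_i \leq U$ for each $i$. For this, intersect finitely many conjugates: take $V = L \cap \bigcap_i g_i U g_i^{-1}$, which is compact open in $L$. Then $\Core_L(V) \leq g_i U g_i^{-1}$ for all $i$, and the displayed argument yields $\Core_L(V) \leq \Core_G(U) = \{1\}$.

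The only subtle step is the finiteness of $L\backslash G/U$. It holds because $G/L$ is compact and is covered by the open images of the cosets $gU$, which form an open cover with a finite subcover. The rest is routine.
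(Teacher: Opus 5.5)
Your final argument is correct and is exactly the paper's proof: take $g_1,\dots,g_n$ with $G=\bigcup_i Lg_iU$, set $V=L\cap\bigcap_i g_iUg_i^{-1}$, and observe that for $g=lg_iu$ one has $\Core_L(V)=l\,\Core_L(V)\,l^{-1}\le lg_iUg_i^{-1}l^{-1}=gUg^{-1}$, so $\Core_L(V)\le\Core_G(U)=\{1\}$.

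However, your remark that ``this step uses only $U$ and not $W$, so $W$ can be dropped'' is false. The inclusion $g_i^{-1}Kg_i\le U$ is precisely where $K\le W$ is used. With $V=U\cap L$ the construction genuinely fails: take $G=S_3$ and $U=L=\langle(12)\rangle$; then $\Core_G(U)=1$ but $\Core_L(U\cap L)=L$. Your last paragraph correctly reinstates $W$, so you should simply delete that remark.

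Also, a left/right point in the finiteness step. The images $gUL$ of $gU$ in $G/L$ give a decomposition $G=\bigcup_i g_iUL$, which is not the one you use. To get $G=\bigcup_i Lg_iU$, use instead that $L\backslash G\cong G/L$ is compact and is partitioned into the open sets $LgU$.
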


\begin{proof}
Let $U$ compact open such that $\Core_G(U)$ is trivial. Take representatives $g_1 U$, $\ldots$, $g_n U$ of the $L$-orbits in $G/U$, and let $V = \bigcap_i g_i U g_i^{-1}$. Then $V_L := L \cap V$ is compact open in $L$ and $\Core_L(V_L)$ is trivial. 
\end{proof}

\begin{Lemma} \label{lem-faith-Cay-Ab-variety}
	Let $G$ be a tdlc group and $U \leq G$ a compact open subgroup with trivial normal core. Then every compact normal subgroup of $G$ embeds as a closed subgroup of $F^I$ for some finite group $F$ and set $I$.
\end{Lemma}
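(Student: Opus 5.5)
Let $K$ be a compact normal subgroup of $G$. Since $U$ is compact open and $K$ is compact, the product $KU$ is a compact open subgroup of $G$, so $U$ has finite index in $KU$. We take $F$ to be the finite group given by the image of the permutation action of $KU$ on the finite coset space $KU/U$.

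Now consider the family of subgroups $gUg^{-1}$ for $g \in G$. Fix $g$. Then $gKUg^{-1} = K\,gUg^{-1}$, because $K$ is normal. Hence $K$ acts on $K\,gUg^{-1}/gUg^{-1}$. This set is in bijection with $KU/U$ via conjugation by $g$, and it has at most $|KU:U|$ elements. This gives a continuous homomorphism
\[ \rho_g \colon K \to \mathrm{Sym}\bigl(K gUg^{-1}/gUg^{-1}\bigr). \]
Its kernel is $\bigcap_{k\in K} k\,gUg^{-1}k^{-1} \cap K$. To keep a single target group, identify each such symmetric group with a subgroup of $\mathrm{Sym}(n)$, where $n = |KU:U|$. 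So it is simplest to take $F = \mathrm{Sym}(n)$ directly, rather than the image group mentioned above.

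Let $I = G$ and define $\rho = (\rho_g)_{g\in G} \colon K \to F^I$. This map is a continuous homomorphism. Its kernel is contained in $\bigcap_{g} gUg^{-1} \cap K = \Core_G(U) \cap K$, which is trivial. So $\rho$ is a continuous injective homomorphism from a compact group into the Hausdorff group $F^I$. It is therefore a topological isomorphism onto its image, and the image is compact and hence closed. This embeds $K$ as a closed subgroup of $F^I$.

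The only point needing care is the uniform bound on the sizes of the orbits. It holds because $|K\,gUg^{-1} : gUg^{-1}| = |KU:U|$ for every $g$, since conjugation by $g$ carries one index onto the other. This step is routine.
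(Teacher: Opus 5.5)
Your proof is correct and is essentially the paper's argument: the action of $K$ on $KgUg^{-1}/gUg^{-1}$ is just the action of $K$ on its orbit through $gU$ in $G/U$ (via right multiplication by $g$), and injectivity comes from $\Core_G(U)=1$. The only cosmetic difference is that you take $F=\mathrm{Sym}(n)$ with $n=|KU:U|$, which needs only that all $K$-orbits have the same size, whereas the paper takes $F$ to be the permutation group induced by $K$ on one orbit, using that $G$ permutes the orbits transitively.
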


\begin{proof}
Let $K$ be a compact normal subgroup of $G$. The subgroup $K$ acts with finite orbits on $G/U$, and since $K$ is normal the group $G$ preserves and acts transitively on the $K$-orbits. We just take $F$ to be the finite permutation group induced by $K$ on each orbit. 
\end{proof}

\begin{Proposition} \label{prop-latt-fi-normalizer}
	Let $\Gamma$ be a finitely generated cocompact lattice of a tdlc group $G$, and suppose $G$ admits a compact open subgroup with trivial normal core. Then $\Gamma$ has finite index in its normalizer $N_G(\Gamma)$. 
\end{Proposition}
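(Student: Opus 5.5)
The plan is to show that $N_G(\Gamma)$ is a discrete subgroup of $G$. Then the conclusion follows from a covolume argument: $N_G(\Gamma)$ is discrete and contains the cocompact lattice $\Gamma$, so it is itself a lattice, and $\mathrm{covol}(\Gamma) = [N_G(\Gamma):\Gamma]\,\mathrm{covol}(N_G(\Gamma))$ forces $[N_G(\Gamma):\Gamma]<\infty$. (Equivalently, $N_G(\Gamma)/\Gamma$ is a discrete subset of the compact space $\Gamma\backslash G$, hence finite.) To get discreteness, it suffices to find an identity neighbourhood $V$ with $N_G(\Gamma)\cap V = \{1\}$.

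\textbf{Step 1: elements of $N_G(\Gamma)$ near $1$ centralize $\Gamma$.} Let $s_1,\ldots,s_k$ generate $\Gamma$. Since $\Gamma$ is discrete, there is an identity neighbourhood $O$ with $\Gamma\cap s_iO=\{s_i\}$ for each $i$. The maps $g\mapsto g s_i g^{-1}s_i^{-1}$ are continuous and send $1$ to $1$, so there is an open neighbourhood $V_0$ of $1$ such that $g s_i g^{-1}\in s_i O$ for all $g\in V_0$ and all $i$. If moreover $g\in N_G(\Gamma)$, then $g s_i g^{-1}\in\Gamma$, so $g s_i g^{-1}=s_i$ for all $i$. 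Hence $N_G(\Gamma)\cap V_0\le C_G(\Gamma)$.

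\textbf{Step 2: use the compact open subgroup with trivial normal core.} Let $U$ be a compact open subgroup with $\Core_G(U)=1$. Since $\Gamma$ is cocompact and $U$ is open, $\Gamma$ has finitely many orbits on $G/U$; pick representatives $g_1U,\ldots,g_nU$ of these orbits. Set
\[ V = V_0\cap\bigcap_{i} g_iUg_i^{-1}, \]
which is an open neighbourhood of $1$. Take $w\in N_G(\Gamma)\cap V$. By Step 1, $w$ commutes with every $\gamma\in\Gamma$. Also $w$ fixes each coset $g_iU$, because $w\in g_iUg_i^{-1}$. Therefore, for every $\gamma\in\Gamma$,
\[ w\gamma g_iU=\gamma w g_iU=\gamma g_iU. \]
So $w$ fixes every point of $G/U$, that is, $w\in\bigcap_{g\in G} gUg^{-1}=\Core_G(U)=1$. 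Thus $N_G(\Gamma)\cap V=\{1\}$, so $N_G(\Gamma)$ is discrete, and by the first paragraph $\Gamma$ has finite index in $N_G(\Gamma)$.

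The only delicate point is Step 1, where finite generation of $\Gamma$ is essential: it reduces the condition to finitely many generators, which yields a single neighbourhood $V_0$. Once we know that elements of $N_G(\Gamma)$ near $1$ centralize $\Gamma$, cocompactness together with the trivial core finishes the argument directly, and Proposition~\ref{prop-coc-cpt-normal} is not needed.
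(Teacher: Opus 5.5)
Your proof is correct and follows essentially the paper's strategy. Finite generation plus discreteness of $\Gamma$ gives an identity neighbourhood in $N_G(\Gamma)$ centralizing $\Gamma$ (the paper phrases this as $\Gamma \le \QZ(N_G(\Gamma))$), and cocompactness together with the trivial-core compact open subgroup then forces $N_G(\Gamma)$ to be discrete, so $[N_G(\Gamma):\Gamma]<\infty$. The only difference is in the second step: the paper passes to $N_G(\Gamma)$ via Lemma~\ref{lem-faith-Cay-Ab-coco} and argues that open normal subgroups of the centralizing compact open subgroup have finite-index normalizers, hence open cores, whereas you replace this with a direct computation on $G/U$ using representatives of the $\Gamma$-orbits.
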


\begin{proof}
Since $\Gamma$ is discrete and normal in $N_G(\Gamma)$, $\Gamma$ lies in the quasi-center of $N_G(\Gamma)$. Since $\Gamma$ is finitely generated, one can find a compact open subgroup $U$ of $N_G(\Gamma)$ such that $U$ centralizes $\Gamma$. For every open normal subgroup $V$ of $U$, the normalizer of $V$ in $N_G(\Gamma)$ therefore contains $U$ and $\Gamma$, and hence has finite index in $N_G(\Gamma)$ because $\Gamma$ is cocompact in $N_G(\Gamma)$. On the other hand $N_G(\Gamma)$ does admit a compact open subgroup $V$ with trivial normal core (Lemma  \ref{lem-faith-Cay-Ab-coco}). So we infer that $N_G(\Gamma)$ must be discrete, and $\Gamma$ has finite index in $N_G(\Gamma)$.
\end{proof}

\begin{Lemma} \label{lem-simplicity-assumptions-faith-Cay-Ab}
	Let $G$ be a compactly generated tdlc group such that $\res(G)$ is open in $G$, and every closed normal subgroup of $G$ is either discrete or contains $\res(G)$. Then $G$ admits a compact open subgroup with trivial normal core.		
\end{Lemma}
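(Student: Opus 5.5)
Let $R = \Res(G)$, which is open and hence a closed normal subgroup of $G$; it is also compactly generated, since it has finite index in the compactly generated group $G$ modulo a compact open subgroup. The plan is to first find a compact open subgroup $U$ of $G$ such that the core $K = \Core_G(U)$ is small, and then show that $K$ is trivial.

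The first step is to produce a compact open subgroup $U$ whose core is discrete. Take any compact open subgroup $U$ and set $K = \Core_G(U)$. This is a compact normal subgroup of $G$. By hypothesis, $K$ is either discrete or contains $R$.

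In the second case $R \leq U$. Then $R$ is compact and open, and it is normal in $G$. By Lemma~\ref{lem-commens-normalizes-res}, $R$ has finite index in the compactly generated group $G$ modulo compact, so $R$ is profinite. A profinite group has a basis of open normal subgroups. Any open normal subgroup of $R$ has finitely many $G$-conjugates, since it is open in the compact group $R$. Intersecting these conjugates gives an open normal subgroup of $G$ that is strictly smaller than $R$, unless $R$ is trivial. This contradicts the definition of $\Res(G)$ as the intersection of all open normal subgroups, because $R \leq O$ for every such $O$.

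So either $R$ is trivial, in which case $G$ is discrete and $\{1\}$ is a compact open subgroup with trivial core, or the second case does not occur. In the latter situation $K$ is a finite normal subgroup of $G$, being compact and discrete.

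The second step removes the finite core $K$. Its centralizer $C_G(K)$ is open, since $K$ is finite and each element of $K$ has open centralizer. This centralizer is normal in $G$. Replace $U$ by a smaller compact open subgroup $V \leq U \cap C_G(K)$ chosen so that $V \cap K = \{1\}$, which is possible because $K$ is finite and $G$ is Hausdorff. Then $\Core_G(V) \leq \Core_G(U) \cap V = K \cap V = \{1\}$, so $V$ has trivial normal core.

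The delicate point, which I expect to be the main obstacle, is the first step: excluding $\Res(G)$ compact but nontrivial requires using that $\Res(G)$ has no proper open normal subgroups of $G$ inside it. This is exactly the finite-conjugates argument above. I will check that the intersection of the conjugates is indeed open and normal in $G$, and that its being equal to $R$ forces $R$ to be trivial, since $R$ has a basis of open normal subgroups that are strictly smaller than $R$ when $R \neq 1$. Theorem~\ref{thm-res-disc-res-triv} could alternatively be invoked here: if $R$ is compact open, then $\Res(R) \le R$, and the conclusion $R = 1$ follows since a profinite group has trivial discrete residual and $\Res(G) \leq \Res$ computed via these subgroups.
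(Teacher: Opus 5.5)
Your second step (pass to an open $V\le U$ with $V\cap\Core_G(U)=\{1\}$) is fine. The first step, however, rests on a false claim, and the conclusion you draw from it is false.

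You assert that every open normal subgroup $N$ of the compact open normal subgroup $R=\Res(G)$ has only finitely many $G$-conjugates, because it is open in the compact group $R$. The number of conjugates is $[G:N_G(N)]$, and all you know is $R\le N_G(N)$. Since $G/R$ can be infinite, nothing bounds this index.

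In fact, a compact $\Res(G)$ need not be trivial, even under the hypotheses of the lemma. Take $G=(\Z/2\Z)^{\Z}\rtimes\Z$, with $\Z$ acting by the shift. This group is compactly generated, and it satisfies the hypotheses with $R$ non-trivial:
\begin{itemize}
\item Every open normal subgroup meets the base in a shift-invariant open subgroup. Such a subgroup contains all shifts of some $\{x : x|_{[-n,n]}=0\}$, hence is the whole base. So $\Res(G)=(\Z/2\Z)^{\Z}$ is compact, open and non-trivial.
\item A closed shift-invariant subgroup of the base is, by Pontryagin duality, the annihilator of an ideal of $\mathbb{F}_2[t^{\pm1}]$. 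It is therefore finite or the whole base.
\item A normal subgroup containing an element with non-zero $\Z$-coordinate $m$ contains the image of $1-t^m$ on the base, which is the whole base.
\end{itemize}
So every closed normal subgroup is finite or contains $\Res(G)$. Taking $U=\Res(G)$ gives $\Core_G(U)=\Res(G)$, so your ``second case'' genuinely occurs with $R\neq 1$. Here $N=\{x : x_0=0\}$ has the infinitely many conjugates $\{x: x_k=0\}$, whose intersection is trivial. Your fallback via Theorem~\ref{thm-res-disc-res-triv} does not help either: for $R$ open and normal one only gets $\Res(R)\le\Res(G)$, not the reverse inclusion.

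The missing idea is to choose $U$ inside $R$, rather than trying to rule out the bad case.
\begin{itemize}
\item If $R\neq 1$, then $G$ is not discrete, since otherwise $\{1\}$ would be an open normal subgroup forcing $R=1$.
\item Hence the open subgroup $R$ contains a compact open subgroup $U$ of $G$ with $U\neq R$.
\item Then $\Core_G(U)\le U\subsetneq R$ cannot contain $R$, so by hypothesis it is discrete, hence finite.
\end{itemize}
Your second step then concludes. This is exactly the paper's argument; it uses neither compact generation nor any count of conjugates. Your side claims are unjustified as stated, though they play no role: that $R$ is compactly generated ($R$ need not be cocompact), and that $R$ is profinite ``by Lemma~\ref{lem-commens-normalizes-res}'' (it is profinite simply because it is compact and totally disconnected).
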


\begin{proof}
If $\res(G)$ is trivial then $G$ is discrete and the conclusion trivially holds. Otherwise choose a compact open subgroup $U$ of $G$ properly contained in $\res(G)$. Then $\Core_G(U)$ must be discrete by the assumption. Hence $\Core_G(U)$ is finite, and any open subgroup $V$ of $U$ that intersects $\Core_G(U)$ trivially has trivial normal core in $G$. 
\end{proof}

\subsection{Completions}

Let $\Gamma$ be a commensurated subgroup of a group $C$. The action of $C$ on $\Omega_\Gamma := C / \Gamma$ yields a homomorphism $\tau_{C,\Gamma}: C \to \mathrm{Sym}(\Omega_\Gamma)$. The {Schlichting completion} $C / \! \! / \Gamma$ of $C$ with respect to $\Gamma$ is the closure of $\tau_{C,\Gamma}(C)$. $\Gamma$ acts on $\Omega_\Gamma$ with finite orbits, and the closure of $\tau_{C,\Gamma}(\Gamma)$ in $C / \! \! / \Gamma$ is isomorphic to the $C$-congruence completion  $\widehat{\Gamma}^C$. 

The {completion of $C$ with respect to the commensurability class} $[\Gamma]$ is defined similarly as $C / \! \! / \Gamma$, replacing $\Omega_\Gamma$ by the disjoint union $\Omega = \bigsqcup \Omega_{\Gamma'}$ where $\Gamma'$ ranges over $[\Gamma]$. It is denoted $ C / \! \! / [\Gamma]$, and the homomorphism from $C$ to $ C / \! \! / [\Gamma]$ is denoted $\tau_{C,[\Gamma]}: C \to C / \! \! / [\Gamma]$. The closure of $\tau_{C,[\Gamma]}(\Gamma)$ in $C / \! \! / [\Gamma]$ is isomorphic to the profinite completion  $\widehat{\Gamma}$. By construction there is a surjective homomorphism $C / \! \! / [\Gamma] \to C / \! \! / \Gamma$, whose restriction to $\widehat{\Gamma}$ is the natural homomorphism $\widehat{\Gamma} \to \widehat{\Gamma}^C$ (where we identify $\widehat{\Gamma}$ and $\widehat{\Gamma}^C$ with the image closure of $\Gamma$ in $C / \! \! / [\Gamma]$ and $C / \! \! / \Gamma$ respectively). The kernel of $C / \! \! / [\Gamma] \to C / \! \! / \Gamma$ is the normal core of $\widehat{\Gamma}$ in $C / \! \! / [\Gamma]$. The subgroup $\Gamma$ has the CSP in $C$ if $C / \! \! / [\Gamma] \to  C / \! \! / \Gamma$ is an isomorphism. 

We have the following basic lemma.

\begin{Lemma} \label{lem-CSP-one-all}
	Let $\Gamma$ be a commensurated subgroup of a group $C$. Suppose: \begin{enumerate}
		\item $\Gamma$ has the CSP in $C$; 
		\item Every element of $[\Gamma]$ is residually finite and virtually torsion-free;
		\item Every torsion-free element in $[\Gamma]$ has torsion-free profinite completion.
	\end{enumerate}
	Then every $\Gamma' \in [\Gamma]$ with no non-trivial finite normal subgroup has the CSP in $C$.
\end{Lemma}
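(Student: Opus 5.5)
Throughout, write $D = C /\!\!/ [\Gamma]$. Since $[\Gamma'] = [\Gamma]$, the set $\Omega$ used to build the completion is the same for $\Gamma$ and $\Gamma'$, so $C /\!\!/ [\Gamma'] = D$. Inside $D$ we identify $\widehat{\Gamma}$ and $\widehat{\Gamma'}$ with the closures of $\Gamma$ and $\Gamma'$; both are compact open subgroups. By the discussion of completions above, $\Gamma'$ has the CSP in $C$ if and only if $K' := \Core_D(\widehat{\Gamma'})$ is trivial. The hypothesis that $\Gamma$ has the CSP says $\Core_D(\widehat{\Gamma}) = 1$.

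\emph{Step 1: a common torsion-free subgroup.} Put $\Lambda = \Gamma \cap \Gamma'$, which has finite index in both $\Gamma$ and $\Gamma'$. By (2), pick a torsion-free finite index subgroup of $\Lambda$, and replace it by its normal core in $\Gamma'$. This gives $\Lambda_0 \trianglelefteq \Gamma'$, torsion-free, of finite index in $\Gamma$, $\Gamma'$ and $\Lambda$, and still in $[\Gamma]$. By (3), its closure $\widehat{\Lambda_0}$ is a torsion-free profinite group. It is open in $D$ and of finite index in both $\widehat{\Gamma}$ and $\widehat{\Gamma'}$.

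\emph{Step 2: $K'$ is finite.} The subgroup $K'$ is compact and normal in $D$. The subgroup $K'_0 := K' \cap \widehat{\Lambda_0}$ is open in $K'$ and torsion-free. For $g \in D$, the conjugate $gK'_0g^{-1} = K' \cap g\widehat{\Lambda_0}g^{-1}$ is again open in $K'$. The aim is to show that the closed normal subgroup
\[ \bigcap_{g \in D} gK'_0g^{-1} = K' \cap \Core_D(\widehat{\Lambda_0}) \]
is both trivial and of finite index in $K'$.
\begin{enumerate}
\item Triviality follows from $\Core_D(\widehat{\Lambda_0}) \leq \Core_D(\widehat{\Gamma}) = 1$.
\item For finite index, the plan is to exploit torsion-freeness. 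A nontrivial element $x \in K'_0$ generates an infinite procyclic subgroup. Since $K'_0$ has finite index in $K'$, the conjugates $gxg^{-1}$ have a nontrivial power lying in $K'_0$. One then wants to propagate this to show that $K'_0$ is virtually normal in $D$, for instance by comparing $K'_0$ with $K' \cap g\widehat{\Lambda_0}g^{-1}$ for $g \in C$, where $\Lambda_0 \cap g\Lambda_0 g^{-1}$ has finite index.
\end{enumerate}
Granting both points, $K'$ is a finite normal subgroup of $D$, and in fact $K'_0 = 1$.

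\emph{Step 3: from finite $K'$ to trivial $K'$.} Now $K'$ is a finite normal subgroup of $\widehat{\Gamma'}$ meeting the torsion-free group $\widehat{\Lambda_0}$ trivially. It therefore injects into the finite quotient $\widehat{\Gamma'}/\widehat{\Lambda_0} \cong \Gamma'/\Lambda_0$. Since $\Gamma'$ is dense in $\widehat{\Gamma'}$ and $K'$ is normal and finite, its centralizer $Z$ in $\widehat{\Gamma'}$ is open. Consider $(K' Z \cap \widehat{\Lambda_0}) \cap \Gamma'$. Pulling back the image of $K'$ in $\Gamma'/\Lambda_0$, I would try to produce a nontrivial finite normal subgroup of $\Gamma'$. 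The idea is to take elements $\gamma \in \Gamma'$ approximating elements of $K'$ modulo a small open normal subgroup contained in $\widehat{\Lambda_0} \cap Z$. By residual finiteness of $\Gamma'$ (so $\Gamma' \hookrightarrow \widehat{\Gamma'}$), such $\gamma$ of finite order should generate a finite normal subgroup of $\Gamma'$. This contradicts the hypothesis that $\Gamma'$ has no nontrivial finite normal subgroup, so $K' = 1$.

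The main obstacle is Step 2(2): showing that the torsion-free open piece $K'_0$ is commensurated tightly enough that $\Core_D(\widehat{\Lambda_0})$ captures a finite-index part of $K'$. Step 3 also needs care in lifting torsion from $\widehat{\Gamma'}$ back to $\Gamma'$, and this is where the exact form of hypotheses (2) and (3) must be used.
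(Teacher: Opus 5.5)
Both steps you flag as obstacles are genuine gaps, and the strategies you sketch for them do not close them. In Step 2(2), given Step 2(1), the target ``$K'\cap\Core_D(\widehat{\Lambda_0})$ has finite index in $K'$'' is just a restatement of the finiteness of $K'$. Proving that $K'_0$ is virtually normal is also not the way in. The missing observation is that $K'$ has finite exponent. Since $K'$ is normal in $D$, conjugation by any $g\in D$ is an automorphism of $K'$ carrying $K'_0$ onto $gK'_0g^{-1}$. So all these conjugates have the same finite index $m=[K':K'_0]$ in $K'$. By pigeonhole, every $x\in K'$ has some power $x^j$ with $1\le j\le m$ lying in $gK'_0g^{-1}$. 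Hence $x^{m!}\in\bigcap_{g\in D}gK'_0g^{-1}=K'\cap\Core_D(\widehat{\Lambda_0})=1$. Thus $K'$ is torsion, so the torsion-free open subgroup $K'_0$ is trivial, and the compact group $K'$ is discrete, hence finite. This finite-exponent property is the content of Lemma~\ref{lem-faith-Cay-Ab-variety}, which the paper invokes at exactly this point.

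In Step 3, approximating elements of $K'$ by elements of $\Gamma'$ cannot produce torsion in $\Gamma'$. Nothing forces an element of $\Gamma'$ lying in a small coset of an element of $K'$ to have finite order. The torsion has to come from hypothesis (3), used contrapositively. Put $\Gamma''=\Gamma'\cap K'\widehat{\Lambda_0}$. Since $K'\widehat{\Lambda_0}$ is an open subgroup of $\widehat{\Gamma'}$ containing $\Lambda_0$, we have $\Lambda_0\le\Gamma''\le\Gamma'$. So $\Gamma''\in[\Gamma]$, and its profinite completion is its closure in $D$, namely $K'\widehat{\Lambda_0}$. If $K'\neq 1$, this group has torsion, so by (3) $\Gamma''$ contains a non-trivial torsion element. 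As $K'\widehat{\Lambda_0}/K'\cong\widehat{\Lambda_0}$ is torsion-free, every torsion element of $K'\widehat{\Lambda_0}$ lies in $K'$. Since $\Gamma''$ injects into $D$ by residual finiteness, $\Gamma'\cap K'$ is non-trivial. It is a finite normal subgroup of $\Gamma'$ because $K'$ is finite and normal in $D$, contradicting the hypothesis on $\Gamma'$. With these two fixes, your outline becomes essentially the paper's proof, which works with a torsion-free finite index $\Gamma\le\Gamma'$ in place of your $\Lambda_0$.
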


\begin{proof}
	Since CSP passes from $\Gamma$ to a finite index subgroup, $\Gamma \cap \Gamma'$ has the CSP in $C$. So we can assume $\Gamma \leq \Gamma'$. Also we can assume $\Gamma$ is torsion-free. Let $N$ be the kernel of $ C / \! \! / [\Gamma] \to  C / \! \! / \Gamma'$. The assumption that $\Gamma$ has the CSP in $C$ means that $\widehat{\Gamma}$ has trivial normal core in $ C / \! \! / [\Gamma]$. So by Lemma  \ref{lem-faith-Cay-Ab-variety} the group $N$ is torsion. Since $\Gamma$ is torsion-free, the assumptions imply $\widehat{\Gamma}$ is torsion-free. So $N \cap \widehat{\Gamma}$ is trivial, $N$ is finite and $\langle N,  \widehat{\Gamma}\rangle  \simeq N \times \widehat{\Gamma}$. Let $\Gamma^{''}$ be the pre-image in $\Gamma'$ of $N \times \widehat{\Gamma}$. We have $\Gamma \leq \Gamma^{''} \leq \Gamma^{'}$, and $\widehat{\Gamma^{''}} \simeq N \times \widehat{\Gamma}$. If $N$ is non-trivial then $\widehat{\Gamma^{''}}$ has torsion, and therefore $\Gamma^{''}$ also has torsion by the assumptions. Since $\Gamma^{''}$ embeds in $\widehat{\Gamma^{''}}$, and the only torsion elements of $N \times \widehat{\Gamma}$ are the elements of $N$, we deduce $\Gamma^{''} \cap N$ is non-trivial. In particular $\Gamma^{'} \cap N$ is a non-trivial finite normal subgroup of $\Gamma^{'}$, a contradiction.  
\end{proof}

\section{A join operation for commensurated subgroups} \label{sec-join}

The goal of this section is to prove Theorem~\ref{thm:commensurated_join}. As a consequence of it, we derive that in any group, in the poset of commensurability classes of subgroups, any pair of finitely generated commensurated subgroups has a unique least upper bound; which is also a finitely generated commensurated subgroup (Corollary \ref{cor:commensurated_join}).

\begin{Definition}
Let $G$ be a tdlc group and $H$ a subgroup of $G$.  An \textbf{open envelope} for $H$ in $G$ is an open subgroup $E$ of $G$ such that $H \leq E$, and for every open subgroup $O$ of $G$ such that $H$ is virtually contained in $O$, $E$ is virtually contained in $O$.
\end{Definition}

It is easy to see that if $H_1$ and $H_2$ are commensurable subgroups of $G$ with open envelopes $E_1$ and $E_2$ in $G$, then $E_1$ and $E_2$ are also commensurable.  In particular, an open envelope, if it exists, is  unique up to commensurability.  Note also that $E$ is an open envelope for $H$ if and only if $E$ is an open envelope for the closure $\overline{H}$: this follows from observing that for any open subgroup $O$, we have $H$  virtually contained in $O$ if and only if $\overline{H}$ is.

The following result, which will be used as a tool in the proof of Theorem~\ref{thm:commensurated_join}, ensures that when $H$ is generated by a relatively compact subset of $G$, there is always an open envelope for $H$ in $G$.  This result was proved in \cite{Reid-distal}. The proof given there is rather long, and relies on an auxiliary result from \cite{Reid-orbit-closures-GGD}.  We provide a short and self-contained proof. Write $\mc{O}(G,H)$ for the set of open subgroups of $G$ normalized by $H$ and $\Res_G(H) = \bigcap \mc{O}(G,H)$.

\begin{Theorem}\label{thm:open_envelope}
Let $G$ be a tdlc group, and let $S$ be a relatively compact subset of $G$.  Then $H = \langle S \rangle$ has an open envelope in $G$.  Moreover, for any open envelope $E$ of $H$, we have $\Res_G(H) = \Res(E)$.
\end{Theorem}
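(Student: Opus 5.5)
The plan is to build the envelope explicitly as a compactly generated open subgroup containing $H$, and then compare it with every open subgroup virtually containing $H$ using the structure theory of compactly generated tdlc groups recalled in Theorem~\ref{thm-res-disc-res-triv}. The main point of the second half is to prove $\Res_G(H) = \Res(E)$ at the same time as the envelope property, since each statement feeds into the other.

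Fix a compact open subgroup $V$ of $G$. For every $O \in \mc{O}(G,H)$ the product $OH$ is an open subgroup containing $H$, and $\mc{O}(G,H)$ is closed under finite intersections. For $O \in \mc{O}(G,H)$, set
\[
E_O = \langle S, V \cap O \rangle .
\]
This is an open subgroup of $G$. It is compactly generated because $\overline{S}$ is compact, and it is contained in $OH$ because $S \subseteq H$ and $H$ normalizes $O$. I will choose $O$ so that $E_O$ is as small as possible up to commensurability, and then show that $E := E_O$ is an open envelope. Note that it costs nothing to pass from $H$ to $\overline{H}$, as remarked before the statement.

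The reduction to $\mc{O}(G,H)$ goes as follows. Let $P$ be an open subgroup such that $H$ is virtually contained in $P$, and let $H_1 = H \cap P$, which has finite index in $H$. Then $Q = \bigcap_{h \in H/H_1} hPh^{-1}$ is a finite intersection of open subgroups, hence open, and it is normalized by $H$. So $Q \in \mc{O}(G,H)$. Moreover $H_1 \cap Q$ has finite index in $H$. It therefore suffices to show that $E$ is virtually contained in $OH$, with only finitely many cosets of $O$ needed, for every $O \in \mc{O}(G,H)$. Granting this, $E$ is virtually contained in $O(H \cap P) \subseteq P$, because $O = Q \le P$ in this application.

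The core step is to find $O_0 \in \mc{O}(G,H)$ such that $\Res(E_{O_0}) \le O$ for every $O \in \mc{O}(G,H)$. Given this, consider the group $E_{O_0}/\Res(E_{O_0})$. It is compactly generated and has trivial discrete residual, so by Theorem~\ref{thm-res-disc-res-triv} it has a basis of compact open normal subgroups. Given $O \in \mc{O}(G,H)$, I pick a compact open normal subgroup $N$ of $E_{O_0}$ containing $\Res(E_{O_0})$ with $N \le$ the preimage of $O \cap V$. Because $E_{O_0}/N$ is discrete and generated by the image of the compact set $\overline{S} \cup (V\cap O_0)$, the image of $E_{O_0}$ is covered by finitely many cosets of the image of $H$. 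This yields $[E_{O_0} : E_{O_0} \cap OH] < \infty$.

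To obtain $O_0$, I first show that $\Res(E_O)$ is normalized by $H$. This holds because $E_O$ is normalized up to commensurability by $H$, and Lemma~\ref{lem-commens-normalizes-res} shows that commensurators normalize $\Res$. I then show that $\Res(E_O) \cdot (O' \cap E_O)$ is again in $\mc{O}(G,H)$. Finally I take $O_0$ minimizing a suitable finite invariant, for instance the index $[V : V \cap \overline{\Res(E_O)}(V\cap O)]$, to reach a stable configuration. I expect this stabilization to be the main obstacle: one has to rule out an infinite strictly descending chain of $\Res$'s. The first thing I would try is the compactness of $V$ together with the fact that all the groups involved contain the fixed compact generating set $\overline{S}$.

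The second assertion then follows directly. On one side, $\Res(E)$ is an intersection of open normal subgroups of $E$, and the core step shows it lies in every $O \in \mc{O}(G,H)$, so $\Res(E) \le \Res_G(H)$. On the other side, every open normal subgroup of $E$ lies in $\mc{O}(G,H)$ because $H \le E$, so $\Res_G(H) \le \Res(E)$.
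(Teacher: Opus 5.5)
\textbf{There is a genuine gap, and it sits exactly where the theorem's content lies.}

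\emph{The core step is not proved, and it would not suffice.} You do not establish the existence of $O_0\in\mc{O}(G,H)$ with $\Res(E_{O_0})\le O$ for all $O\in\mc{O}(G,H)$; you only suggest a minimization that you hope stabilizes. More seriously, even granting that property, the next deduction fails. The fact that $E_{O_0}/N$ is discrete and generated by the image of a compact set only shows that $E_{O_0}/N$ is finitely generated. It does not show that the image of $H$ has finite index.

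Here is a concrete counterexample. Let $G=\langle a,b \mid a^2=b^2=1\rangle$ be the infinite dihedral group with the discrete topology, $S=\{a\}$, $V=\{1,b\}$ and $O_0=G$.
\begin{itemize}
\item Then $E_G=G$ and $\Res(E_G)=\{1\}$, so your core property holds.
\item Yet for $O=\{1\}\in\mc{O}(G,H)$, the group $E_G$ is not virtually contained in $OH=H=\{1,a\}$.
\item Every minimizer of your suggested index $[V:V\cap\overline{\Res(E_O)}(V\cap O)]$ has $b\in O$, hence $E_O=G$. So the proposed selection picks precisely the bad subgroup.
\end{itemize}
A smaller point: the normal subgroup $N$ you choose is open, but it is compact only if $\Res(E_{O_0})$ is.

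\emph{The missing idea.} You need an open subgroup $O'\in\mc{O}(G,H)$ in which $R=\Res_G(H)$ is a cocompact normal subgroup. Given such $O'$, the group $E=O'H$ is an envelope. Indeed, for $O\in\mc{O}(G,H)$, the subgroup $O\cap O'$ is open in $O'$ and contains $R$, so it has finite index because $O'/R$ is compact. Hence $(O\cap O')H$ has finite index in $E$, and your (correct) reduction from arbitrary $P$ to $\mc{O}(G,H)$ finishes the argument.

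The paper produces $O'$ dynamically:
\begin{enumerate}
\item Replace $H$ by $\overline{H}$, which is compactly generated.
\item Apply Proposition~\ref{prop:small_invariant_neighborhood} to the conjugation action of $H$ on $G/R$, with the filtering family $\{O/R : O\in\mc{O}(G,H)\}$ whose intersection is the point $R$. For a compact open $U$, this gives a compact open $V$ all of whose $H$-conjugates lie in $UR$.
\item Then each $vOv^{-1}$ with $v\in V$ has only finitely many $H$-conjugates, since they all have the form $uOu^{-1}$ with $u\in U$. This forces $R\le vOv^{-1}$, and hence $V\le N_G(R)$.
\item A second application gives $O\in\mc{O}(G,H)$ such that $O'=O\cap VR$ is $H$-invariant.
\end{enumerate}

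\emph{The second assertion.} Your inclusion $\Res_G(H)\le\Res(E)$ is correct. However, you only treat your constructed $E$. For an arbitrary envelope $E$, argue as follows. For $O\in\mc{O}(E,H)$, the envelope property makes $E$ virtually contained in $OH\le N_G(O)$. So $O$ has finitely many $E$-conjugates, and therefore $\Res(E)\le O$. Applying this to $O\cap E$ for each $O\in\mc{O}(G,H)$ gives $\Res(E)\le\Res_G(H)$. Alternatively, you can transfer between commensurable envelopes via Lemma~\ref{lem-commens-normalizes-res}.
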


We say a family of sets $\mc{S}$ is {filtering} if for all $A,B \in \mc{S}$, there exists $C \in \mc{S}$ such that $C \subseteq A \cap B$.  The proof of the following proposition is inspired by \cite[Proposition 2.5]{CaMo-decomp}. 

\begin{Proposition}\label{prop:small_invariant_neighborhood}
Let $X$ be a locally compact space, and $H$ a compactly generated locally compact group acting continuously on $X$. Let $x \in X$ and $\mc{O}$ be a filtering family of $H$-invariant closed sets containing $x$ such that $\bigcap \mc{O} = \{x\}$.  Then for every compact open neighborhood $W$ of $x$ in $X$, there exists $O \in \mc{O}$ such that $W \cap O$ is $H$-invariant.
\end{Proposition}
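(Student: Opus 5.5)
Fix a compact generating set $S$ of $H$ containing $1$, so that $H=\bigcup_n S^n$. The plan is a contradiction argument that combines compactness with the filtering property, in the spirit of \cite[Proposition 2.5]{CaMo-decomp}.

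The first step is a reduction. For $O\in\mc{O}$, the set $W\cap O$ is $H$-invariant as soon as $s(W\cap O)\subseteq W$ for every $s\in S\cup S^{-1}$. Indeed, since $O$ is $H$-invariant, we then get $s(W\cap O)\subseteq W\cap O$ for all $s\in S\cup S^{-1}$. Since $S\cup S^{-1}$ generates $H$ as a semigroup, $W\cap O$ is invariant under all of $H$. Replacing $S$ by $S\cup S^{-1}$, it therefore suffices to find $O\in\mc{O}$ with $S(W\cap O)\subseteq W$.

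Next I argue by contradiction. Suppose that for every $O\in\mc{O}$ the set
\[F_O=\{(s,y)\in S\times (W\cap O) : sy\notin W\}\]
is nonempty. Because $W$ is open and compact, it is also closed. The map $(s,y)\mapsto sy$ is continuous, so $F_O$ is a closed subset of the compact space $S\times W$. Since $\mc{O}$ is filtering, the family $\{F_O\}$ has the finite intersection property: for $O_1,\dots,O_k$ there is $O\subseteq O_1\cap\dots\cap O_k$, and then $F_O\subseteq\bigcap_i F_{O_i}$. Compactness therefore gives a point $(s,y)$ lying in every $F_O$. Then $y\in W\cap\bigcap\mc{O}=\{x\}$, so $y=x$. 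Hence $sx\notin W$. But $x\in O$ for every $O\in\mc{O}$ and each $O$ is $H$-invariant, so $sx\in\bigcap\mc{O}=\{x\}\subseteq W$. This is a contradiction.

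The only delicate point is the closedness of $F_O$. It requires $W$ to be clopen (true, since $W$ is compact open in a Hausdorff space) and $O$ to be closed (given). The continuity of the action is used only on $S\times W$. Local compactness of $H$ is used only to have a compact generating set. I do not expect any serious obstacle beyond arranging the first reduction correctly, namely closing $S$ under inverses so that forward invariance gives full invariance.
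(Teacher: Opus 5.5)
Your proof is correct, and it has the same skeleton as the paper's proof, but it justifies the key compactness step differently.

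The paper takes $S=S^{-1}$ and uses Bourbaki's result that the stabilizer in $H$ of the compact open set $W$ is open. Hence $\{sW : s\in S\}$ is finite, so $W_0=\bigcap_{s\in S}sW$ is open with $sW_0\subseteq W$ for all $s\in S$. Then $W_1=W\setminus W_0$ is compact and misses the fixed point $x$. Filtering plus compactness give $O\in\mc{O}$ with $W_1\cap O=\emptyset$. Thus $W\cap O=W_0\cap O$, and this set is $S$-invariant.

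Your set of ``bad'' points is $\{y\in W : sy\notin W \text{ for some } s\in S\}$, which is exactly $W_1$ when $S=S^{-1}$. Your $F_O$ projects onto $W_1\cap O$. So your compactness argument in $S\times W$ effectively shows $W_1$ is compact, as the projection of a closed subset of the compact space $S\times W$. The paper instead deduces this from the openness of $W_0$. Your route is self-contained and uses only joint continuity of the action on $S\times W$. The paper's route produces the explicit open set $W_0$ but relies on the external stabilizer lemma.

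One minor correction to your closing remark: closedness of $F_O$ in $S\times W$ only needs $W$ to be open, so that $X\setminus W$ is closed, together with $O$ being closed. Closedness of $W$ itself is never used.
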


\begin{proof}
Take $S$ compact in $H$ such that  $S = S^{-1}$ and $H = \langle S \rangle$. Since $W$ is compact and open, the stabilizer of $W$ in $H$ is open \cite[III,5,Th.1]{Bourbaki-top-gen}. Therefore $\{ sW \mid s \in S \}$ is finite, and $W_0 := \bigcap_{s \in S}sW$ is open. Note that since $S = S^{-1}$ we have $sW_0 \subseteq W$ for every $s \in S$. Let $W_1 = W \setminus W_0$.  We note that $H$ fixes $x$, so $x \not\in W_1$ and hence $W_1 \cap \bigcap \mc{O} = \emptyset$.  Since $W_1$ is compact and $\mc{O}$ is filtering, it follows that there is $O \in \mc{O}$ such that $W_1 \cap O = \emptyset$. Hence  $W \cap O = W_0 \cap O$. By the construction of $W_0$, it follows that $s (W \cap O) \subseteq W \cap O$ for every $s \in S$.  Since $S = S^{-1}$ and $H = \langle S \rangle$, $W \cap O$ is $H$-invariant.
\end{proof}

\begin{proof}[Proof of Theorem~\ref{thm:open_envelope}]
Write $R = \Res_G(H)$.	Consider the closure $\overline{H}$ of $H$ in $G$. It is generated by $\overline{H}$ together with any neighborhood of $1$ in $\overline{H}$. Hence it is compactly generated. Since $\Res_G(H) = \Res_G(\overline{H})$, we can assume $H$ is closed and compactly generated. Let $\mc{O} = \{O/R \mid O \in \mc{O}(G,H)\}$, considered as a collection of closed subspaces of $X = G/R$.  The elements of $\mc{O}$ are invariant under the action of $H$ on $X$ by conjugation, $\mc{O}$ is closed under finite intersections, and $\bigcap\mc{O} = \{R\}$. Applying Proposition~\ref{prop:small_invariant_neighborhood} and lifting up to $G$, we deduce that for every compact open subgroup $U$ of $G$ there is $O \in \mc{O}(G,H)$ such that $O \cap UR$ is $H$-invariant. In particular there is a compact open subgroup $V$ of $G$ all of whose $H$-conjugates lie in $UR$. We check $V$ normalizes $R$. Take $v \in V$ and $O \in \mc{O}(G,H)$. It follows from the property satisfied by $V$ that $vOv^{-1}$ has only finitely many $H$-conjugates. Hence their intersection belongs to $\mc{O}(G,H)$, and hence $R \leq vOv^{-1}$. Since $O$ was arbitrary in $\mc{O}(G,H)$ we have $R \leq vRv^{-1}$, and then $R = vRv^{-1}$. Hence $V$ normalizes $R$. Therefore $VR$ is a subgroup of $G$, and hence if $O \in \mc{O}(G,H)$ is such that $O' = O \cap VR$ is $H$-invariant, then $O' \in \mc{O}(G,H)$ and $O'$ contains $R$ as a cocompact normal subgroup. It is then a simple verification to see that $E = O' H$  is an open envelope for $H$. 

It remains to check that $R = \Res(E)$ (which by Lemma \ref{lem-commens-normalizes-res} is equivalent to $R = \Res(E')$ for any open envelope $E' $ for $H$). The inclusion $R \leq \Res(E)$ is clear. Conversely, for $O \in \mc{O}(E,H)$ we have that the normalizer of $O$ in $E$ has finite index by the defining property of an open envelope. Therefore the intersection of all $E$-conjugates of $O$ is an open normal subgroup of $E$ contained in $O$, yielding the inclusion  $\Res(E) \leq R$.
\end{proof}

\begin{Remark} \label{rmk-envelope-cpct-gen}
If $E$ is an open envelope of $H$, then it follows from the definition that $\left\langle H, U \right\rangle $ has finite index in $H$ for every compact open subgroup $U$ of $E$. In particular for $H$ as in Theorem~\ref{thm:open_envelope}, any open envelope of $H$ is compactly generated. 
\end{Remark}

Let $C$ be a group with commensurated subgroups $A_1,A_2$.  In general, the subgroup $\langle A_1,A_2 \rangle$ is not commensurated by $C$, and its commensurability class is sensitive to the choice of $A_1$ and $A_2$ in $[A_1]$ and $[A_2]$. A basic example is when $C = A_1 *_B A_2$, where the amalgamating subgroup $B$ maps to both a proper finite index subgroup of $A_1$ and a proper finite index subgroup of $A_2$.  However, one might ask whether, among subgroups in which $A_1,A_2$ are virtually contained, there is one that is the unique smallest such up to commensurability. Provided it exists, given its uniqueness such a subgroup is necessarily commensurated in $C$.  The following result gives a positive answer in certain circumstances. 

\begin{Theorem}\label{thm:commensurated_join}
	Let $C$ be a group with subgroups $A_1,A_2$ satisfying one of the following assumptions:
	\begin{enumerate}[label={(\alph*)}]
		\item \label{A1-fg} $A_2$ is commensurated in $C$, and $A_1$ is generated by $A_1 \cap A_2$ together with finitely many elements;
		\item \label{A2-fg} both $A_1$ and $A_2$ are commensurated in $C$, and $A_2$ is generated by $A_1 \cap A_2$ together with finitely many elements.
	\end{enumerate}
	Then there is a subgroup $A$ of $C$, unique up to commensurability, such that:
	\begin{enumerate}
		\item \label{item-A-generators} $A_1$ and $A_2$ are virtually contained in $A$; 
		\item \label{item-A-smallest} For every subgroup $B$ of $C$ such that $A_1$ and $A_2$ are virtually contained in $B$, $A$ is virtually contained in $B$.
	\end{enumerate}
	Moreover one can take $A = \langle A_1,A_2' \rangle$ for some finite index subgroup $A_2'$ of $A_2$, and $A_2'$ can be taken to be a finite intersection of conjugates of $A_2$;
\end{Theorem}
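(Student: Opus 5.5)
The plan is to transfer the problem to the Schlichting completion and apply Theorem~\ref{thm:open_envelope} there. Uniqueness up to commensurability is immediate from (\ref{item-A-generators}) and (\ref{item-A-smallest}), so only existence needs work.

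\textbf{Setup in case \ref{A1-fg}.} Let $G = C /\!\!/ A_2$ and $\tau = \tau_{C,A_2} : C \to G$. Let $U$ be the closure of $\tau(A_2)$; it is compact open and $\tau^{-1}(U) = A_2$. Recall that the pointwise stabilizers of finite subsets of $C/A_2$ form a basis of compact open subgroups of $G$. Hence every open subgroup $V \le U$ contains some $V_0$ with $\tau^{-1}(V_0)$ a finite intersection of conjugates of $A_2$.

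\textbf{Dictionary between $C$ and $G$.} Since $\tau(C)$ is dense, I expect the following three facts, each of which is a direct check.
\begin{enumerate}
\item If $O \le G$ is open, then $O = \overline{\tau(\tau^{-1}(O))}$. Moreover $\tau^{-1}$ preserves indices of open subgroups, because every coset of an open subgroup meets $\tau(C)$.
\item If $B \le C$ contains $A_2' = \tau^{-1}(V)$ with $V \le U$ open, then $\overline{\tau(B)} = \tau(B)V$ is open and $\tau^{-1}(\overline{\tau(B)}) = B A_2' = B$.
\item Consequently, open subgroups of $G$ correspond, up to finite index, to subgroups of $C$ that virtually contain $A_2$.
\end{enumerate}

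\textbf{Construction of $A$.} By hypothesis $A_1 = \langle A_1 \cap A_2, F\rangle$ with $F$ finite. Thus $H = \tau(A_1)$ is generated by the relatively compact set $\tau(A_1\cap A_2) \cup \tau(F) \subseteq U \cup \tau(F)$. Theorem~\ref{thm:open_envelope} then gives an open envelope $E$ of $H$. By Remark~\ref{rmk-envelope-cpct-gen}, for any small open $V \le E \cap U$ the subgroup $\langle H, V\rangle$ has finite index in $E$, so it is again an open envelope. Choose $V$ so that $A_2' := \tau^{-1}(V)$ is a finite intersection of conjugates of $A_2$, and set $A = \langle A_1, A_2'\rangle$. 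By fact 2, $\overline{\tau(A)}$ is open and contains $\langle H, V\rangle$. Since $\langle H,V\rangle$ is open, hence closed, and contains $\tau(A)$, we get $\overline{\tau(A)} = \langle H,V\rangle$. Therefore $\tau^{-1}(\langle H, V\rangle) = A$, and property (\ref{item-A-generators}) is clear.

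\textbf{Minimality.} Let $B$ virtually contain $A_1$ and $A_2$. Pass to a finite index subgroup $B' \le B$ containing some $A_2'' = \tau^{-1}(V'')$ with $V''$ open. Then $O = \overline{\tau(B')}$ is open, $\tau^{-1}(O) = B'$, and $O \supseteq \tau(A_1 \cap B')$, so $H$ is virtually contained in $O$. The envelope property gives $E$, hence $\langle H,V\rangle$, virtually contained in $O$. Pulling back via fact 1 shows $A$ is virtually contained in $B'$, and hence in $B$.

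\textbf{Case \ref{A2-fg}.} This is the same argument with the roles of $A_1$ and $A_2$ exchanged: complete with respect to the commensurated subgroup $A_1$ and take the envelope of $\tau(A_2)$. The join is symmetric, so the resulting $A$ serves for both cases.

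\textbf{Main obstacle.} The main point requiring care is the bookkeeping in the dictionary between subgroups of $C$ and open subgroups of $G$. In particular, I must make sure the chosen $V$ yields $A_2'$ of the required form (a finite intersection of conjugates), and that "virtually contained" transfers correctly in both directions through $\tau$ and $\tau^{-1}$.
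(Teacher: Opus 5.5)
Your case (a) follows the paper's route: complete $C/\!\!/A_2$, take the open envelope of $\tau(A_1)$, and pull back. However, the minimality step fails as written. You pass to a finite-index subgroup $B'\le B$ containing some $\tau^{-1}(V'')$ with $V''$ open. That requires $B\cap A_2$ to contain a finite intersection of conjugates of $A_2$, which is a congruence-type property that fails in general. For instance, take $C=A_2=\mathbb{Z}$, $A_1=1$ and $B=2\mathbb{Z}$. Then $C/\!\!/A_2$ is trivial, so every $\tau^{-1}(V'')$ equals $\mathbb{Z}$ and no such $B'$ exists.

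Your fact 3 points the right way, but you need to enlarge $B$ rather than shrink it. The subgroup $W=\overline{\tau(B\cap A_2)}$ is closed of finite index in $U$, hence open. So $O=\overline{\tau(B)}=\tau(B)W$ is open and $\tau^{-1}(O)=B\,\tau^{-1}(W)$. This contains $B$ with finite index, since $\tau^{-1}(W)\le A_2$ contains $B\cap A_2$ with finite index. Run your envelope argument with $\tau^{-1}(O)$ in place of $B'$. You get that $A$ is virtually contained in $\tau^{-1}(O)$, hence in $B$; this is exactly what the paper does.

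In case (b), exchanging the roles of $A_1$ and $A_2$ only yields $A'=\langle A_1',A_2\rangle$, with $A_1'$ a finite intersection of conjugates of $A_1$. This $A'$ has properties (1) and (2). But the \emph{Moreover} clause asks for the other shape, $\langle A_1,A_2'\rangle$ with $A_2'$ a finite intersection of conjugates of $A_2$. Symmetry of the join gives the same commensurability class, not this shape. Case (a) cannot be invoked directly either, since $A_1$ is not assumed to be generated by $A_1\cap A_2$ plus finitely many elements. This shape is used later, e.g.\ for $\Lambda=\langle\tau_{C,\Gamma}^{-1}(U),H'\rangle$ in the proof of Theorem~\ref{thm-local-contraint-completion}.

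The missing step uses that $A_1$ is commensurated:
\begin{enumerate}
\item Every conjugate of $A'$ still satisfies (1) and (2), hence is commensurable with $A'$.
\item Since $A_1$ is virtually contained in $A'$, $A'$ has only finitely many $A_1$-conjugates.
\item Their intersection $A''$ has finite index in $A'$, is normalized by $A_1$, and contains a finite intersection $A_2'$ of conjugates of $A_2$.
\item Then $\langle A_1,A_2'\rangle\le A''A_1$, which is commensurable with $A'$. Property (2) of $A'$ gives the reverse virtual containment, so $\langle A_1,A_2'\rangle$ is the required representative.
\end{enumerate}
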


\begin{proof}
	Observe that uniqueness of $A$ up to commensurability is automatic from (\ref{item-A-generators}) and (\ref{item-A-smallest}). Let us first prove the result under hypothesis \ref{A1-fg}. Consider the completion $C / \! \! / A_2$ of $C$ with respect to $A_2$. For simplicity we write $L = C / \! \! / A_2$ and $\tau$ instead of $\tau_{C,A_2}$ for the homomorphism  $C \to L$. 

Since  $\tau(A_2)$ is relatively compact in $L$, \ref{A1-fg} ensures $\tau(A_1)$ is generated by a relatively compact subset of $L$. Hence we can appeal to Theorem~\ref{thm:open_envelope} to obtain an open envelope $E$ of $\tau(A_1)$ in $L$. Let $A_0 = \tau^{-1}(E)$.  Then $A_0$ contains $A_1$, and $A_0$ contains a finite intersection $A_2'$ of conjugates of $A_2$ because $E$ is open in $L$. In particular $A_2$ is virtually contained in $A_0$. Now suppose $B$ is a subgroup of $C$ such that $A_1$ and $A_2$ are virtually contained in $B$. Consider $O = \overline{\tau(B)}$. Since  $A_2$ is virtually contained in $B$, $O$ is open in $L$. Since $\tau(A_1)$ is virtually contained in $O$, by the characterization of $E$ we infer that $E$ is virtually contained in $O$. Hence $\tau^{-1}(E) = A_0$ is virtually contained in  $ \tau^{-1}(O)$. We now claim that $B$ has finite index in $\tau^{-1}(O)$. Indeed, letting $U = \overline{\tau(A_2 \cap B)}$, we see that $\tau^{-1}(U)$ is commensurable with $A_2$, and hence $\tau^{-1}(U)$ contains $A_2 \cap B$ as a finite index subgroup. Combined with the equation $O = \tau(B) U$, we infer that $\tau^{-1}(O) = B \tau^{-1}(U)$ contains $B$ with finite index. So we deduce $A_0$ is virtually contained in $B$. Taking $A = \langle  A_1,A_2' \rangle$ satisfies the conclusions of the theorem: note that $A \le A_0$ by construction, so $A$ inherits (\ref{item-A-smallest}) from $A_0$.

Now suppose instead that hypothesis \ref{A2-fg} holds.  By the previous argument we obtain a subgroup ${A}'$ with property (\ref{item-A-smallest}) of the form ${A}' = \langle A_1', A_2 \rangle$  with $A_1'$  a finite index subgroup of $A_1$. The point is to see that there is a subgroup $A$ commensurable with ${A}'$ with $A$ as in the last sentence of the statement. Since  $A_1$ and $A_2$ are commensurated, any conjugate of ${A}'$ keeps (\ref{item-A-generators}) and (\ref{item-A-smallest}), and hence is commensurable with $A'$. Hence ${A}'$ is commensurated in $C$. In particular any two $A_1$-conjugates of ${A}'$ are commensurable. Now since $A_1$ is virtually contained in ${A}'$, there are only finitely many of those conjugates. Hence their intersection provides a finite index subgroup $A''$ of $A'$ that is normalized by $A_1$, and $A''' = A'' A_1$ remains commensurable with $A'$. By construction $A'''$ contains a subgroup $A_2'$ of $A_2$ that is a finite intersection of conjugates of $A_2$ (as $A''$ already does). It follows from (\ref{item-A-smallest}) that the subgroup $A = \langle A_1,A_2' \rangle$ of $A'''$ satisfies the conclusion. 
\end{proof}

Given a group $C$, we denote by $\mathrm{S}(C)$ the set of subgroups of $C$, and $\mathrm{S}(C) / \! \sim$ the set of commensurability classes of subgroups of $C$. Write $[A_1] \preccurlyeq [A_2]$ if $A_1$ is virtually contained in $A_2$. This relation makes $\mathrm{S}(C) / \! \sim$  a poset. 

\begin{Corollary}\label{cor:commensurated_join}
	Let $C$ be a group with commensurated subgroups $A_1$ and $A_2$, at least one of which is finitely generated.  Then $\left\lbrace [A_1] , [A_2] \right\rbrace $ has a least upper bound $[A_1] \vee [A_2]$ in $\mathrm{S}(C) / \! \sim$, and $[A_1] \vee [A_2]$ consists of commensurated subgroups. Moreover we can find $A$ such that $[A] = [A_1] \vee [A_2]$ of the form $A = \langle A_1, A_2 \cap A\rangle$.  In particular, if $A_1$ and $A_2$ are both finitely generated, then so is $A$. We call $[A]$ the \textbf{join} of $[A_1]$ and $[A_2]$. 
\end{Corollary}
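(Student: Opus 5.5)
The plan is to apply Theorem~\ref{thm:commensurated_join} and then extract the remaining properties directly from conditions (\ref{item-A-generators}) and (\ref{item-A-smallest}). Relabel if necessary so that $A_1$ is the finitely generated one. Then hypothesis \ref{A1-fg} holds trivially, since $A_1$ is generated by $A_1 \cap A_2$ together with finitely many elements. The theorem produces a subgroup $A$, unique up to commensurability, satisfying (\ref{item-A-generators}) and (\ref{item-A-smallest}). This says exactly that $[A]$ is an upper bound of $\{[A_1],[A_2]\}$ which lies below every other upper bound, so $[A] = [A_1]\vee[A_2]$ is the least upper bound in $\mathrm{S}(C)/\!\sim$. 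If instead $A_2$ is the finitely generated one, we use hypothesis \ref{A2-fg} (or simply swap the indices), since both subgroups are commensurated.

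Next we show every member of $[A]$ is commensurated. For $c \in C$, the conjugate $cAc^{-1}$ virtually contains $cA_ic^{-1}$, which is commensurable with $A_i$ because $A_i$ is commensurated. Hence $cAc^{-1}$ again satisfies (\ref{item-A-generators}). Property (\ref{item-A-smallest}) transports under conjugation in the same way, because conjugation by $c$ permutes the set of subgroups that virtually contain both $A_1$ and $A_2$ (again using that $A_1$ and $A_2$ are commensurated). Uniqueness then gives $[cAc^{-1}] = [A]$. Any subgroup commensurable with a commensurated subgroup is itself commensurated, so the whole class consists of commensurated subgroups.

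For the normal form, start from the representative $A = \langle A_1, A_2'\rangle$ provided by the theorem, with $A_2' \le A_2$ of finite index. Then $A_2' \le A_2 \cap A \le A_2$, so $\langle A_1, A_2\cap A\rangle$ lies between $A$ and $A$ itself, and hence equals $A$. (If we had to swap roles, we would use the last sentence of Theorem~\ref{thm:commensurated_join} under \ref{A2-fg}, which still yields a representative of the form $\langle A_1, A_2'\rangle$.) Finally, if both $A_i$ are finitely generated, then $A_2'$ has finite index in the finitely generated group $A_2$ and is therefore finitely generated, so $A = \langle A_1, A_2'\rangle$ is finitely generated.

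The only point needing care is the conjugation-invariance step, namely checking that (\ref{item-A-smallest}) survives conjugation. This is routine, but it relies on $A_1$ and $A_2$ both being commensurated and not merely on one of them.
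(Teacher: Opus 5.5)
Your proof is correct and follows the paper's route. The paper applies case \ref{A1-fg} of Theorem~\ref{thm:commensurated_join} and observes that, because $A_1$ and $A_2$ are commensurated, the resulting $A$ has all the stated properties; your conjugation-invariance argument is the same one the paper uses in the proof of case \ref{A2-fg}, and your explicit appeal to case \ref{A2-fg} when only $A_2$ is finitely generated is a useful extra precision, since it gives the asymmetric normal form $A = \langle A_1, A_2 \cap A\rangle$ exactly as stated.
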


\begin{proof}
Apply case \ref{A1-fg} of Theorem~\ref{thm:commensurated_join}. Since both $A_1,A_2$ are commensurated, the subgroup $A$ provided by the theorem satisfies the present conclusion. 
\end{proof}

\section{The proof of Theorem \ref{thm-intro-main-not-loc-fg}} \label{sec-not-loc-fg}

The main goal of this section is to prove Theorem \ref{thm-main-not-loc-fg}, of which Theorem \ref{thm-intro-main-not-loc-fg} from the introduction is a special case.

\begin{Proposition} \label{prop-one-latt-normalizes-other}
	Let $\Gamma$ be a finitely generated cocompact lattice of a tdlc group $G$, and suppose $G$ admits a compact open subgroup with trivial normal core. Let $\rho: \Comm_G(\Gamma) \to \Comm(\Gamma)$ be the natural homomorphism from the commensurator of $\Gamma$ in $G$ to the abstract commensurator of $\Gamma$. If $H$ is a finitely generated subgroup of $\Comm_G(\Gamma)$ such that  $\rho(H)$ is virtually contained in $\rho(\Gamma)$, then $H$ is virtually contained in $\Gamma$. 
\end{Proposition}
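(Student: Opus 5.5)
Since $\rho(H)$ is virtually contained in $\rho(\Gamma)$, there is a finite index subgroup $H_1$ of $H$ with $\rho(H_1) \leq \rho(\Gamma)$. $H_1$ is still finitely generated, so it suffices to show that $H_1$ is virtually contained in $\Gamma$. The kernel of $\rho$ is the set of $g \in \Comm_G(\Gamma)$ centralizing some finite index subgroup of $\Gamma$. So for each generator $h$ of $H_1$ there is $\gamma_h \in \Gamma$ such that $h\gamma_h^{-1}$ centralizes a finite index subgroup of $\Gamma$.

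The plan is to show that such centralizing elements form a small set. Let $Z$ be the set of $g \in G$ centralizing some finite index subgroup of $\Gamma$; it is a subgroup of $\Comm_G(\Gamma)$ normalized by $\Gamma$. We claim $Z$ is finite, or at least that $Z\Gamma$ contains $\Gamma$ with finite index.

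\emph{Step 1 (the key step).} Fix a finite index subgroup $\Gamma_0$ of $\Gamma$, and let $Z_0 = C_G(\Gamma_0)$. This is a closed subgroup normalized by $\Gamma_0$, and $Z_0 \cap \Gamma$ is the center of $\Gamma_0$ intersected with $\Gamma$, which is discrete. I would show that $Z_0$ is compact. Consider $M = \overline{Z_0 \Gamma_0}$, a closed subgroup containing the cocompact lattice $\Gamma_0$, hence cocompact and in fact containing $\Gamma_0$ as a cocompact lattice.

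\emph{Step 2.} Show that $Z_0$ is discrete. The group $\Gamma_0$ centralizes $Z_0$, and $Z_0$ is normal in $M$. Take a compact open $V$ of $M$. Conjugation by $\Gamma_0$ acts trivially on $Z_0$, and $M = \Gamma_0 \cdot$(compact), so the $M$-conjugates of $V \cap Z_0$ are obtained from conjugation by a compact set. Hence $\bigcap_{m\in M} m(V\cap Z_0)m^{-1}$ contains an open subgroup of $Z_0$ that is normal in $M$. Concretely, pick $K$ compact with $M = K\Gamma_0$; then finitely many $K$-conjugates suffice, since $K$ normalizes some compact open subgroup. This yields a compact normal subgroup $W$ of $M$ that is open in $Z_0$. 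By Proposition \ref{prop-coc-cpt-normal} and Lemma \ref{lem-faith-Cay-Ab-coco} together with the trivial-core hypothesis, $W$ lies in a compact normal subgroup of $G$. Combining this with the argument of Proposition \ref{prop-latt-fi-normalizer}, we then get that $W$ is finite (here I expect some care is needed). So $Z_0$ is discrete. Since $Z_0\Gamma_0$ is then a discrete group containing the lattice $\Gamma_0$, the index $[Z_0\Gamma_0:\Gamma_0]$ is finite.

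\emph{Step 3.} Now $H_1$ is generated by finitely many elements $h_i = z_i\gamma_i$ with $z_i \in C_G(\Gamma_i)$. Take $\Gamma_0 = \bigcap \Gamma_i$, intersected with its conjugates by the $h_i$ so that it is normalized appropriately (possible since the $h_i$ commensurate $\Gamma$). Then $\langle H_1, \Gamma_0\rangle \leq \langle Z_0, \Gamma\rangle$, where $Z_0$ is normalized by $\Gamma$ up to finite index. Using finiteness of $Z_0$ modulo $\Gamma_0$, the group $\langle Z_0, \Gamma\rangle$ is discrete and contains $\Gamma$ with finite index, hence so does its intersection with $H_1$. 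Therefore $H_1$, and hence $H$, is virtually contained in $\Gamma$.

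The main obstacle is Step 2: proving that centralizers of finite index subgroups of $\Gamma$ are finite, that is, discrete. The first thing I would try is to mimic Proposition \ref{prop-latt-fi-normalizer}, namely to show that $Z_0\Gamma_0$ normalizes a compact open subgroup of $M$, and then use the trivial core to force discreteness.
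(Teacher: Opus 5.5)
Your overall plan is the right one: write the generators of $H_1$ as $z_i\gamma_i$, find a common finite index $\Gamma_0$ centralized by all $z_i$, then invoke Proposition \ref{prop-latt-fi-normalizer}. However, the step you flag as the main obstacle (Step 2) is never proved, and the route you sketch for it cannot work as stated. Knowing that $W$ lies in a compact normal subgroup of $G$ gives no finiteness, because groups with a compact open subgroup of trivial normal core can have infinite compact normal subgroups.

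Take for instance $G = F^{\Z} \rtimes \Z$, with $\Z$ acting by the shift, $F$ a non-trivial finite group, and $\Gamma = \Z$. Here $\{x : x_n = 1 \text{ for } |n|\le N\}$ is compact open with trivial core, yet $F^{\Z}$ is an infinite compact normal subgroup. Several of your intermediate claims are also false in this example:
\begin{itemize}
\item $Z_0 = C_G(\Gamma_0)$ contains the center of $\Gamma_0$. So it is neither compact (your Step 1) nor finite (your closing sentence) in general.
\item Your set $Z$ contains every periodic sequence in $F^{\Z}$, so $[Z\Gamma : \Gamma]$ is infinite, contrary to your opening claim.
\end{itemize}
The only true statement you need is $[C_G(\Gamma_0)\Gamma_0 : \Gamma_0] < \infty$ for a fixed $\Gamma_0$.

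The missing observation is the trivial inclusion $C_G(\Gamma_0) \le N_G(\Gamma_0)$. Since $\Gamma_0$ has finite index in $\Gamma$, it is itself a finitely generated cocompact lattice. So Proposition \ref{prop-latt-fi-normalizer} applies to it directly: $N_G(\Gamma_0)$ is discrete and contains $\Gamma_0$ with finite index. This makes $M$, $W$ and Proposition \ref{prop-coc-cpt-normal} unnecessary.

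In Step 3, the useful normalization is not intersecting with $h_i$-conjugates. Instead, choose $\Gamma_0$ normal in $\Gamma$, for example by taking each $\Gamma_i$ normal in $\Gamma$. Then $h_i\Gamma_0h_i^{-1} = z_i\gamma_i\Gamma_0\gamma_i^{-1}z_i^{-1} = z_i\Gamma_0 z_i^{-1} = \Gamma_0$. Hence $H_1 \le N_G(\Gamma_0)$, and $H_1 \cap \Gamma_0$ has finite index in $H_1$. This is exactly the paper's argument.
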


\begin{proof}
Upon passing to a finite index subgroup of $H$ one can assume $\rho(H) \leq \rho(\Gamma)$. So every $h$ in $H$ can be written $c \gamma$ with $c \in G$ centralizing a finite index subgroup of $\Gamma$, and $\gamma \in \Gamma$. Take $h_1,\ldots,h_n$ a finite generating subset of $H$, and write $c_i \gamma_i = h_i$ as above. Let $\Gamma_i$ be a finite index subgroup of $\Gamma$ centralized by $c_i$. We can assume $\Gamma_i$ is normal in $\Gamma$. By construction $\Gamma' = \bigcap^n_{i=1}\Gamma_i$ is normalized by $H$. By Proposition \ref{prop-latt-fi-normalizer}, $H$ must be virtually contained in $\Gamma'$, and hence in $\Gamma$.
\end{proof}

\begin{Proposition} \label{prop-latt-cpct-by-discrete}
		Let $\Gamma$ be a finitely generated cocompact lattice of a tdlc group $G$, and suppose $G$ admits a compact open subgroup with trivial normal core. Suppose $K$ is a compact normal subgroup of $G$ and $H$ a finitely generated subgroup of $\Comm_G(\Gamma)$ such that $KH$ is virtually contained in $K \Gamma$. Then $H$ is virtually contained in $\Gamma$.
\end{Proposition}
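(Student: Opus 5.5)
The plan is to reduce the statement to Proposition~\ref{prop-one-latt-normalizes-other}. Let $\rho: \Comm_G(\Gamma) \to \Comm(\Gamma)$ be as there. It suffices to show that, after passing to a finite index subgroup of $H$, we have $\rho(H) \leq \rho(\Gamma)$.

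First, since $KH$ is virtually contained in $K\Gamma$, some finite index subgroup $H_1$ of $H$ satisfies $H_1 \leq K\Gamma$. We replace $H$ by $H_1$; it is still finitely generated and contained in $\Comm_G(\Gamma)$. Every $h \in H$ can then be written as $h = k\gamma$ with $k \in K$ and $\gamma \in \Gamma$. Since $h$ and $\gamma$ both lie in $\Comm_G(\Gamma)$, so does $k = h\gamma^{-1}$. Hence it suffices to prove the following claim: every $k \in K \cap \Comm_G(\Gamma)$ centralizes a finite index subgroup of $\Gamma$, i.e.\ $\rho(k) = 1$. Granting the claim, $\rho(h) = \rho(\gamma) \in \rho(\Gamma)$, so $\rho(H) \leq \rho(\Gamma)$. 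Proposition~\ref{prop-one-latt-normalizes-other} (which uses the trivial-core hypothesis) then gives that $H$ is virtually contained in $\Gamma$.

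To prove the claim, fix $k \in K \cap \Comm_G(\Gamma)$ and let $\Gamma_0 = \Gamma \cap k^{-1}\Gamma k$, a finite index subgroup of $\Gamma$. For $\gamma \in \Gamma_0$ the element $k\gamma k^{-1}\gamma^{-1}$ lies in $\Gamma$, because $k\gamma k^{-1} \in \Gamma$. It also lies in $K$, because $K$ is normal, so $\gamma k^{-1}\gamma^{-1} \in K$. Now $F := K \cap \Gamma$ is finite, being the intersection of a compact set with a discrete subgroup. So we have a map $\Gamma_0 \to F$, $\gamma \mapsto f_\gamma := k\gamma k^{-1}\gamma^{-1}$. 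If $f_\gamma = f_{\gamma'}$, then $k\gamma k^{-1} = f\gamma$ and $k\gamma' k^{-1} = f\gamma'$ for the common value $f$. Hence $k(\gamma^{-1}\gamma')k^{-1} = \gamma^{-1}f^{-1}f\gamma' = \gamma^{-1}\gamma'$, so $\gamma^{-1}\gamma'$ lies in the centralizer $Z$ of $k$ in $\Gamma_0$. Thus the left cosets of $Z$ in $\Gamma_0$ inject into $F$, so $[\Gamma_0 : Z] \leq |F| < \infty$. Therefore $k$ centralizes the finite index subgroup $Z$ of $\Gamma$, as claimed.

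I do not expect a serious obstacle. The only point needing care is the coset-counting in the claim: the map $\gamma \mapsto f_\gamma$ is not a homomorphism, but equal values still force $\gamma^{-1}\gamma'$ into the centralizer, which is all that is needed. One also has to note that passing to the finite index subgroup $H_1$ is harmless for the conclusion. Apart from Proposition~\ref{prop-one-latt-normalizes-other}, which is where the trivial-core hypothesis enters, the argument uses only finiteness of $K \cap \Gamma$ and normality of $K$.
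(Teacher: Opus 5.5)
Your proof is correct. It differs from the paper's at the key step, namely showing that $K \cap \Comm_G(\Gamma)$ lies in $\ker\rho$.

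The paper first changes the ambient group. It replaces $G$ by $K\Gamma$, and then by the closure of $\Comm_G(\Gamma)$, so that the commensurator is dense. It then observes that $E = K \cap N$, with $N$ the intersection of all finite index subgroups of $\Gamma$, is a finite subgroup with dense normalizer, hence normal in $G$. Passing to $G/E$, some finite index subgroup of $\Gamma$ meets $K$ trivially. Only then does the commutator $[c,\gamma] \in K \cap \Gamma$ become trivial, so that $c$ centralizes $\Gamma \cap c^{-1}\Gamma c$.

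You instead allow $F = K \cap \Gamma$ to be finite but non-trivial, and use the standard counting argument. The fibers of $\gamma \mapsto k\gamma k^{-1}\gamma^{-1}$ from $\Gamma_0$ to $F$ lie in left cosets of the centralizer of $k$ in $\Gamma_0$, so this centralizer has index at most $|F|$.

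Your route is shorter and more elementary. It needs no density of the commensurator, no change of ambient group and no quotient. It therefore also avoids checking that the trivial-core hypothesis survives in $K\Gamma$, in the closure of the commensurator, and in $G/E$; the paper does not spell out the last of these. In your argument that hypothesis enters only through the final appeal to Proposition~\ref{prop-one-latt-normalizes-other}, exactly as in the paper. The paper's reduction yields exact centralization of $\Gamma \cap c^{-1}\Gamma c$ in the reduced setting, whereas yours gives a uniform index bound $|F|$. Either is enough for the proposition.
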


\begin{proof}
	Upon passing from $H$ to a finite index subgroup one can assume $H \leq K \Gamma$. Hence upon replacing $G$ by $K \Gamma$ (which also admits a compact open subgroup with trivial normal core by Lemma  \ref{lem-faith-Cay-Ab-coco}), one can assume $G = K \Gamma$. In particular $\Comm_G(\Gamma) = (K \cap \Comm_G(\Gamma)) \Gamma$. Also, upon replacing $G$ by the closure of $\Comm_G(\Gamma)$ one can assume $\Comm_G(\Gamma)$ is dense. 
	
	Set $F := K \cap \Gamma$. Since $\Gamma$ is discrete and $K$ is compact, $F$ is finite. We want to reduce to the case $F$ is trivial. Consider the intersection $N$ of all finite index subgroups of $\Gamma$, and let $E = F \cap N = K \cap N$. The subgroup $N$ is normalized by $\Comm_G(\Gamma)$. Therefore so is $E$ (because $K$ is normal in $G$). Therefore the finite subgroup $E$ has dense normalizer in $G$, and hence is normal in $G$. Hence upon considering $G/E$ one can assume $E$ is trivial. That means there is a finite index subgroup of $\Gamma$  that intersects $K$ trivially, and hence there is no loss of generality in assuming $K \cap \Gamma$ is trivial.
	
	Now take $c \in K \cap \Comm_G(\Gamma)$, and let $\gamma \in \Gamma \cap c^{-1} \Gamma c$. The commutator $[c,\gamma]$ belongs to $\Gamma$ by definition, and also to $K$ because $c \in K $ and $K$ is normal. Therefore $[c,\gamma]$ is trivial, and $c$ centralizes $\Gamma \cap c^{-1} \Gamma c$. This means $K \cap \Comm_G(\Gamma)$ lies in the kernel of $\rho: \Comm_G(\Gamma) \to \Comm(\Gamma)$, and in view of the equation $\Comm_G(\Gamma) = (K \cap \Comm_G(\Gamma)) \Gamma$ it follows that the image of $\rho$ is equal to $\rho(\Gamma)$. The statement then follows from Proposition \ref{prop-one-latt-normalizes-other}. 
\end{proof}

We now return to our main setting where $\Gamma$ is a cocompact lattice of $G$, and $C$  a dense subgroup of $G$ such that $\Gamma \leq C \leq \Comm_G(\Gamma)$. One important feature of the completion $C/\! \!/\Gamma$ in that situation is that $C$ embeds as an irreducible cocompact lattice in the product $G \times C/\! \!/\Gamma$ \cite{CM-NPC-disc-sub}. The following proposition recasts properties of the join operation defined in Corollary \ref{cor:commensurated_join} in that situation.

\begin{Proposition} \label{prop-join-lattice-situation}
	Let $G$ be a locally compact group, and $\Gamma \leq G$ a cocompact lattice. Let $C$ be a dense subgroup of $G$ such that $\Gamma \leq C \leq \Comm_G(\Gamma)$, and let $H$ be a finitely generated  commensurated subgroup of $C$. Let $\tau_{C,\Gamma} : C \to C/\! \!/\Gamma$ be the homomorphism from $C$ to $C/\! \!/\Gamma$. Let $E$ be an open envelope of $\tau_{C,\Gamma}(H)$ in  $C/\! \!/\Gamma$, let $\Lambda = \tau_{C,\Gamma}^{-1}(E)$ (so that $\Lambda$ is a representative of the join of $[\Gamma]$ and $[H]$), and let $L = \overline{\Lambda} \leq G$. Then: \begin{enumerate}
	\item the diagonal homomorphism $\Lambda \to L \times E$ has discrete and cocompact image, and the projection of $\Lambda$ to each factor is dense. 
\end{enumerate}
Moreover for $G$ tdlc, we have  :  \begin{enumerate}[resume]
	\item the group $L$ is locally finitely generated; 
	\item $\Res(L)$ is a closed normal subgroup of $G$. 
\end{enumerate}
\end{Proposition}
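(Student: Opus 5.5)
The plan is to derive part (1) from the fact that $C$ sits as an irreducible cocompact lattice in $G \times C/\! \!/\Gamma$ \cite{CM-NPC-disc-sub}, part (2) from the compact generation of $E$ via a density argument, and part (3) from Lemma \ref{lem-commens-normalizes-res} together with the density of $C$. Throughout I write $\tau = \tau_{C,\Gamma}$ and $Q = C/\! \!/\Gamma$, and I identify $C$ with its diagonal image in $G \times Q$.

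\textbf{Part (1).} Since $E$ is open in $Q$, we have $\Lambda = C \cap (G \times E)$. The first step is to show that the intersection of a cocompact lattice with an open subgroup is a cocompact lattice in that subgroup. The orbits of the open subgroup $G \times E$ on the compact space $(G\times Q)/C$ are open. Hence they are also closed, being complements of unions of other orbits. So the orbit of the base point is compact, and $\Lambda$ is cocompact in $G \times E$. Now $\Lambda \le L \times E$, and $L \times E$ is a closed subgroup of $G \times E$ containing the cocompact subgroup $\Lambda$, so $\Lambda$ is cocompact in $L\times E$; discreteness is inherited. The projection of $\Lambda$ to $L$ is dense by definition of $L$. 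For the projection to $E$: $\tau(C)$ is dense in $Q$ and $E$ is open, so $\tau(\Lambda) = \tau(C)\cap E$ is dense in $E$.

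\textbf{Part (2).} By Remark \ref{rmk-envelope-cpct-gen}, $E$ is compactly generated. The group $\Lambda$ is commensurable with the finitely generated group $\langle H, \Gamma' \rangle$ from Corollary \ref{cor:commensurated_join}, hence finitely generated; this shows $L$ is compactly generated, but I need more.
\begin{enumerate}
\item Fix a compact open subgroup $U$ of $L$. Then $U \times E$ is open in $L \times E$, so by the argument of part (1), $\Lambda_U := \Lambda \cap (U \times E)$ is a cocompact lattice in $U\times E$.
\item Since $U \times E$ is compactly generated, $\Lambda_U$ is finitely generated.
\item The projection $p_L$ satisfies $p_L(\Lambda_U) = p_L(\Lambda) \cap U$, which is dense in $U$ because $p_L(\Lambda)$ is dense in $L$ and $U$ is open.
\end{enumerate}
Thus $U$ has a dense finitely generated subgroup, so $L$ is locally finitely generated.

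\textbf{Part (3).} The join class consists of commensurated subgroups (Corollary \ref{cor:commensurated_join}), so $C \le \Comm_G(\Lambda)$. Taking closures, for $c\in C$ the subgroup $\overline{\Lambda \cap c\Lambda c^{-1}}$ is a closed subgroup of finite index in both $L$ and $cLc^{-1}$. It is therefore open in both, so $C \le \Comm_G(L)$ in the sense of Lemma \ref{lem-commens-normalizes-res}. That lemma then gives that $C$ normalizes $\Res(L)$. Now $\Res(L)$ is closed in $L$, hence in $G$. The normalizer of a closed subgroup is closed, and it contains the dense subgroup $C$, so it is all of $G$.

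I expect the main obstacle to be the lattice-intersection bookkeeping in parts (1) and (2). In particular, one must check carefully that $\Lambda_U$ is a \emph{cocompact} lattice in $U\times E$, so that finite generation passes from $U \times E$ to $\Lambda_U$.
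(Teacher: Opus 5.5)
Your proposal is correct and follows the paper's proof essentially step for step: the Caprace--Monod fact that $C$ is a discrete cocompact subgroup of $G \times C/\!\!/\Gamma$, intersecting with $G \times E$ and then with $U \times E$, compact generation of $E$ via Remark~\ref{rmk-envelope-cpct-gen}, and, for part (3), commensuration of $L$ by $C$ combined with Lemma~\ref{lem-commens-normalizes-res} and density of $C$; you also fill in routine details the paper leaves implicit or cites, namely openness of orbits of open subgroups and closures of finite-index subgroups. The one thing to drop is your unused aside that $\Lambda$ is finitely generated, since that would require $\Gamma$ to be finitely generated, which this proposition does not assume.
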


\begin{proof}
We note that $E$ indeed exists since $H$ is finitely generated. Write $G_1 = G$ and $G_2 = C/\! \!/\Gamma$. Since $\Gamma$ is discrete and cocompact in $G_1$, and $C$ maps densely to $G_2$, and there is a compact open subgroup of $G_2$ whose pre-image in $C$ is equal to $\Gamma$, the image of $C$ in $G_1 \times G_2$ under the diagonal homomorphism is discrete and cocompact (see \cite[Lemma 5.15]{CM-NPC-disc-sub}).  For the rest of the proof we identify $C$ and its subgroups with their images in $G_1 \times G_2$. By definition one has $\Lambda = C \cap (G_1 \times E)$, so that $\Lambda$ is indeed discrete cocompact in $G_1 \times E$. By definition of $L$ we have that $\Lambda$ sits inside $L \times E$, and has a dense projection on each of $L$ and $E$. 

If $G$ is tdlc and $U$ is a compact open subgroup of $L$, then $\Lambda_U = \Lambda \cap (U \times E)$ is discrete and cocompact in $U \times E$. The subgroup $H$ being finitely generated, $E$ is compactly generated (Remark \ref{rmk-envelope-cpct-gen}). Therefore so is $U \times E$, and we infer $\Lambda_U$ is finitely generated (being a discrete and cocompact subgroup of a compactly generated group). By assumption $\Lambda_U$ has a dense projection to $U$, so $U$ is topologically finitely generated. (This observation goes back at least to  \cite[Proposition 1.1.2]{Burger-Mozes-Zimmer--linear-rep}). 

Finally the subgroup $\Lambda$ is commensurated in $C$ by Corollary \ref{cor:commensurated_join}. Therefore its closure $L$ in $G$ remains commensurated by $C$ (as is easily verified, see for instance \cite[Lemma 2.7]{LB-Wes-commens}). By Lemma  \ref{lem-commens-normalizes-res} this implies $C$ normalizes $\Res(L)$. Since  $C$ is dense in $G$ and $\Res(L)$ is closed, $\Res(L)$ is normal in $G$. 
\end{proof}

Theorem \ref{thm-intro-main-not-loc-fg} from the introduction is a special case of the following result. 

\begin{Theorem} \label{thm-main-not-loc-fg}
Let $\Gamma$ be a finitely generated cocompact lattice of a tdlc group $G$, and $C$ a dense subgroup of $G$ such that $\Gamma \leq C \leq \Comm_G(\Gamma)$. Suppose: \begin{enumerate}
	\item  \label{item-faith-CayAb} $G$ admits a compact open subgroup with trivial normal core;
	\item \label{item-alternative-normal-subgroups} closed normal subgroups $N$ of $G$ satisfy the following alternative: either there exists a compact subgroup $K$ of $N$ such that $K$ is open in $N$ and $K$ is normal in $G$, or $\Res(G) \leq N$; 
	\item \label{item-not-loc-fg} for every closed subgroup $J$ of $G$ satisfying $\Res(G) \leq J$, the group $J$ is not locally finitely generated.
\end{enumerate} 
Then every finitely generated commensurated subgroup of $C$ is virtually contained in $\Gamma$. 
\end{Theorem}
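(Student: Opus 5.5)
Let $H$ be a finitely generated commensurated subgroup of $C$. The plan is to use Proposition \ref{prop-join-lattice-situation}. Let $E$ be an open envelope of $\tau_{C,\Gamma}(H)$ in $C/\!\!/\Gamma$. Put $\Lambda = \tau_{C,\Gamma}^{-1}(E)$, which represents the join $[\Gamma]\vee[H]$, and put $L = \overline{\Lambda} \leq G$. That proposition gives three facts: $L$ is locally finitely generated, $\Res(L)$ is a closed normal subgroup of $G$, and $\Lambda$ is commensurated by $C$. Since $\Gamma \leq \Lambda$ and $\Gamma$ is cocompact, $L$ is closed and cocompact in $G$. We then apply assumption (\ref{item-alternative-normal-subgroups}) to $N = \Res(L)$.

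\emph{Case $\Res(G) \leq \Res(L)$.} Then $\Res(G) \leq L$, so by (\ref{item-not-loc-fg}) the group $L$ is not locally finitely generated. This contradicts Proposition \ref{prop-join-lattice-situation}, so this case cannot occur.

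\emph{Case $\Res(L)$ has a compact open subgroup $K$ normal in $G$.} Pass to $\bar L = L/K$. Here $\Res(L)/K$ is discrete, and we would like to conclude that $\Res(\bar L)$ is discrete. Compact generation of $L$ follows from cocompactness in $G$ when $G$ is compactly generated. Otherwise we would use that $L$ is generated by $\Gamma$ (finitely generated) and a compact open subgroup, since $\Gamma$ is cocompact. Theorem \ref{thm-res-disc-res-triv} then says the compact open normal subgroups of $\bar L$ form a neighbourhood basis of the identity. Pulling back gives a compact open normal subgroup $K' \geq K$ of $L$. We would like $K'$ normal in $G$. Since $C$ commensurates $L$, the intersection of $L$-conjugates of $K'$ is commensurated by $C$. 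I would take $K'$ inside $\Res(L)$-type characteristic pieces, or replace $K'$ by $\Res(L)K$-compatible choices, and use density of $C$ to arrange normality. Failing that, we apply Proposition \ref{prop-coc-cpt-normal} to $K' \trianglelefteq L$ with $L$ cocompact in $G$. This yields a compact normal subgroup $M$ of $G$ containing $K'$, which is all we need.

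\emph{Conclusion.} With $M$ compact and normal in $G$, $ML$ is a closed cocompact subgroup containing $M$ as an open normal subgroup, so $ML/M$ is discrete. Then $\Lambda M/M$ is a discrete subgroup of $G/M$ containing the cocompact lattice $\Gamma M/M$. A discrete subgroup containing a cocompact lattice contains it with finite index, so $\Gamma M$ has finite index in $\Lambda M$. Since $H$ is virtually contained in $\Lambda$, the group $MH$ is virtually contained in $M\Gamma$. Assumption (\ref{item-faith-CayAb}) and Proposition \ref{prop-latt-cpct-by-discrete} (with $H \leq C \leq \Comm_G(\Gamma)$) then give that $H$ is virtually contained in $\Gamma$.

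The main obstacle is the second case: producing a compact normal subgroup $M$ of $G$ such that $L/M$ is discrete. This needs the passage from "$\Res(L)$ is compact-by-discrete" to "$L$ is compact-by-discrete" via Theorem \ref{thm-res-disc-res-triv}, which requires checking compact generation of $L$ and that the quotient behaves. It then needs the upgrade from normality in $L$ to normality in $G$, for which Proposition \ref{prop-coc-cpt-normal} is the key input.
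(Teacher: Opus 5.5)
Your proof is correct and matches the paper's outline (join representative $\Lambda$, $L=\overline{\Lambda}$, the dichotomy for $\Res(L)$, Theorem~\ref{thm-res-disc-res-triv} on $L/K$ giving a compact open normal subgroup $K'$ of $L$, then Proposition~\ref{prop-latt-cpct-by-discrete}) up to how you finish. You treat normality of $K'$ in $G$ as the main obstacle. You resolve it with Proposition~\ref{prop-coc-cpt-normal}, which gives a compact normal subgroup $M$ of $G$ with $K'\le M$. You then check that $LM/M$ is discrete, deduce that $MH$ is virtually contained in $M\Gamma$, and apply Proposition~\ref{prop-latt-cpct-by-discrete} in $G$. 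The discreteness holds because $L\cap M\supseteq K'$ is open in $L$ and $M$ is compact.

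The paper never leaves $L$. It applies Proposition~\ref{prop-latt-cpct-by-discrete} with $L$ as the ambient group, $K'$ as the compact normal subgroup, and a finite index subgroup of $H$ lying in $\Lambda\le L$. The only inputs are these:
\begin{itemize}
\item $K'\Gamma$ is an open subgroup of $L$ containing the cocompact $\Gamma$, hence of finite index.
\item $L$ has a compact open subgroup with trivial normal core, by Lemma~\ref{lem-faith-Cay-Ab-coco}.
\end{itemize}
So the paper's finish uses only an elementary lemma, while yours uses Wang's theorem and an extra quotient computation. The obstacle you flagged disappears once you stop insisting on working in $G$. You should delete the speculation about characteristic pieces and using density of $C$ to arrange normality, since it is not an argument; your fallback via Proposition~\ref{prop-coc-cpt-normal} is what actually carries the proof.

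Two small points to tidy up:
\begin{itemize}
\item $\Lambda$ need not contain $\Gamma$, only the finite index subgroup $\Gamma\cap\Lambda$. Replace $\Gamma$ by it, as the paper does.
\item The discreteness of $\Res(L/K)$ needs the identity $\Res(L/K)=\Res(L)/K$. Its nontrivial inclusion uses compactness of $K$: for the filtering family of open normal subgroups $O$ of $L$, one has $\bigcap_O OK=(\bigcap_O O)K$.
\end{itemize}
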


\begin{proof}
Take $H \leq C$ finitely generated and commensurated, and let $\Lambda$ and $L$ be as in Proposition  \ref{prop-join-lattice-situation}. After replacing $\Gamma$ with a finite index subgroup, we may assume $\Gamma \le L$. Since $\Gamma$ is finitely generated and $\Gamma$ is cocompact in $L$, $L$ is compactly generated. By the last point of the proposition, the subgroup $R := \Res(L)$ is a closed normal subgroup of $G$. Suppose $\Res(G) \leq R$. Then in particular $\Res(G) \leq L$ because $R \leq L$. Then by (\ref{item-not-loc-fg}) we have that $L$ is not locally finitely generated, in contradiction with Proposition  \ref{prop-join-lattice-situation}. So $\Res(G) \leq R$ cannot hold, and therefore by (\ref{item-alternative-normal-subgroups}) there must exist a compact subgroup $K \leq R$ such that $K$ is open in $R$ and $K$ is normal in $G$. In particular $K$ is normal in $L$. One easily verifies that $R/K = \Res(L/K)$. Since $K$ was open in $R$, it follows that $\Res(L/K)$ is discrete. Since $L/K$ is compactly generated, by Theorem \ref{thm-res-disc-res-triv} there is a compact open normal subgroup $K'/K$ of $L/K$; thus $K'$ is a compact open normal subgroup of $L$.  Since $\Gamma$ is cocompact in $L$, we see that $K'\Gamma$ is a finite index subgroup of $L$. Now the subgroup $L$ being cocompact in $G$, it inherits the property of admitting a compact open subgroup with trivial normal core (Lemma  \ref{lem-faith-Cay-Ab-coco}). All together we have that $\Gamma,H,L$ satisfy all the properties of Proposition \ref{prop-latt-cpct-by-discrete}. The conclusion follows. 
\end{proof}

\begin{Remark}
It is worth mentioning recent examples by Huang-Mj that suggest Theorem \ref{thm-main-not-loc-fg} seem rather optimal in that level of generality, even for rich ambient tdlc group $G$. Theorem 1.5 and Corollary 1.6 in \cite{Huang-Mj} exhibit examples of right angled Artin groups $\Gamma$, viewed as a cocompact lattice in $G$ the automorphism group of the cube complex associated to $\Gamma$, a subgroup $H$ of $\Gamma$ that is finitely generated, infinite and infinite index in $\Gamma$, and a non-discrete subgroup $C$ of $G$ with $\Gamma \leq C \leq \Comm_G(\Gamma)$ such that $H$ is commensurated in $C$. 
\end{Remark}

\section{The proof of Theorem \ref{thm-intro-main-completion}} \label{sec-local-assumption-completion}

\subsection{An auxiliary result}

The goal of this subsection is to prove Theorem  \ref{thm-local-contraint-completion}. The setting is rather abstract: unlike elsewhere in the paper, the groups $\Gamma$ and $C$ are not assumed to be respectively discrete and dense in some ambient locally compact group. 

Recall that a locally compact group is  \textbf{locally elliptic} if every compact subset is contained in a compact subgroup. Every locally compact group $G$ admits a unique locally elliptic closed normal subgroup containing any locally elliptic closed normal subgroup of $G$. It is called the locally elliptic radical (LE-radical) of $G$, and is denoted $\RadLE(G)$. It is a topologically characteristic subgroup of $G$. 

\begin{Definition}
The local prime content of a profinite group $U$ is the set of primes $p$ such that $p$ divides the order of every open subgroup of $U$. It is denoted $\lpc(U)$.
\end{Definition}

An equivalent formulation is that $p$ belongs to $\lpc(U)$ if and only if $U$ contains an infinite pro-$p$ subgroup.

\begin{Theorem} \label{thm-local-contraint-completion}
	Let $\Gamma$ be a commensurated subgroup of a group $C$. Suppose that: \begin{enumerate}
		\item \label{item-normalizer-fi-ind} for every subgroup $\Gamma' $ commensurable with $\Gamma$, $\Gamma' $ has finite index in $N_C(\Gamma')$. 
		\item \label{item-LERad-fi} The LE-radical of the group $C / \! \! /\Gamma$ is finite; 
		\item \label{item-local-assumption-completion} $C / \! \! /\Gamma$ admits a compact open subgroup $U$ such that: \begin{enumerate}
			\item \label{item-inf-loc-primes} $\lpc(U)$ is infinite;
			\item \label{item-assu-local-normal}  there is no pair $(M,N)$ of infinite closed normal subgroups of $U$ such that $M \cap N$ is finite and $\lpc(U/N)$ is finite.
		\end{enumerate} 
	\end{enumerate}
	Then every finitely generated commensurated subgroup of $C$ is virtually contained in $\Gamma$.
\end{Theorem}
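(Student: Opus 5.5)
Let $H \leq C$ be finitely generated and commensurated, and write $G_2 = C/\!\!/\Gamma$ and $\tau = \tau_{C,\Gamma}$. As in Proposition~\ref{prop-join-lattice-situation}, I take an open envelope $E$ of $\tau(H)$ in $G_2$; it exists by Theorem~\ref{thm:open_envelope}, since $\tau(H)$ is finitely generated. Then $\Lambda = \tau^{-1}(E)$ represents the join $[\Gamma]\vee[H]$, and $\Lambda$ is commensurated in $C$ by Corollary~\ref{cor:commensurated_join}. Hence the closure $\overline{\tau(\Lambda)} = E$ is commensurated by the dense subgroup $\tau(C)$. By Lemma~\ref{lem-commens-normalizes-res}, $R := \Res(E)$ is then a closed normal subgroup of $G_2$. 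Since $E$ is compactly generated (Remark~\ref{rmk-envelope-cpct-gen}), the plan is to show that $R$ is discrete. Theorem~\ref{thm-res-disc-res-triv} then shows that $E$ has a compact open normal subgroup $W$. Since $\tau(\Gamma)$ is dense in the compact open subgroup $U_\Gamma$ of $E$, $\tau^{-1}(W)$ is commensurable with $\Gamma$ and normalized by $\Lambda$. Hypothesis (\ref{item-normalizer-fi-ind}) then gives that $\Lambda$, and hence $H$, is virtually contained in $\tau^{-1}(W)$, and so in $\Gamma$.

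The core step is to rule out $R$ being non-discrete, using the local hypotheses on $U$. Fix a compact open $V \leq U$ inside $E$ and set $N := R \cap V$, an infinite closed normal subgroup of $V$ if $R$ is non-discrete. Since $R$ is normal in $G_2$, the core of $V$ in the commensurator-type data should also give normal subgroups. The idea is that $\Res(E)$ ``sees'' only finitely many local primes. Indeed, $\tau(\Lambda)$ is discrete and cocompact in (closure in $G$)$\times E$ in the lattice situation, but here there is no ambient $G$. Instead I would use that $E/R$ is residually discrete and compactly generated, so by Theorem~\ref{thm-res-disc-res-triv} $E/R$ has small compact open normal subgroups. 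I then aim to argue that the image of $V$ in $E/R$ has finite local prime content, using that $E$ is the envelope of a finitely generated group together with a compact open subgroup, so that $E/R$ is compact-by-(finitely generated discrete) with $\tau(H)$ controlling it. This is the step I am least sure of. I would try to derive finiteness of $\lpc(V/N)$ from the fact that otherwise infinitely many primes would yield a normal subgroup complementary to $N$.

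With $N$ infinite and $\lpc(V/N)$ finite, hypothesis (\ref{item-assu-local-normal}), transported to $V$ by commensurability, says that every infinite closed normal subgroup $M$ of $V$ meets $N$ infinitely. I would then consider $M := \Core$ of $V$-conjugates of the centralizer $C_{G_2}(R) \cap V$, or better, the quasi-center-type subgroup built from $R$ being normal in $G_2$. If $R$ is non-discrete and normal, I would compare $R$ with $\RadLE(G_2)$. If $R$ is not locally elliptic, hypothesis (\ref{item-inf-loc-primes}) combined with finiteness of $\lpc(V/N)$ forces $N$ to carry all but finitely many primes. Hypothesis (\ref{item-LERad-fi}) rules out $R$ being compact-by-discrete, hence locally elliptic.

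The main obstacle is producing the pair $(M,N)$ with $M\cap N$ finite. My first attempt would be to take $M$ to be the kernel of the action of $V$ on a suitable quotient associated with $\Gamma$, or the intersection of $V$ with a complement obtained from $R$ being normal, and use finiteness of $\RadLE(G_2)$ to force $R = G_2$-large, i.e.\ $R$ open. Then $E$ would have open residual, and I would derive a contradiction with $\lpc(E/R)$ via the finitely generated discrete quotient. If this fails, I would reverse roles: $N$ infinite normal with finite prime quotient contradicts (\ref{item-assu-local-normal}) by taking $M$ as a Sylow-type product over the infinitely many primes not seen in $V/N$.
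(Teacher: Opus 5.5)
\textbf{Verdict: there is a genuine gap.} The key step, that $R$ is discrete, is not proved, and the route you sketch for it cannot work.

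Your outer framework is sound and essentially matches the paper's. You build a representative $\Lambda$ of $[\Gamma]\vee[H]$, let $E$ be the closure of $\tau(\Lambda)$, and note that $R=\Res(E)$ is normal in $C/\!\!/\Gamma$. Once $R$ is known to be discrete, you apply Theorem~\ref{thm-res-disc-res-triv} and hypothesis (\ref{item-normalizer-fi-ind}) to $\tau^{-1}(W)$ for a compact open normal subgroup $W$ of $E$.

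A minor point: hypothesis (\ref{item-assu-local-normal}) concerns the given $U$. It does not obviously pass to an open subgroup $V$ ``by commensurability'', since the $U$-cores of normal subgroups of $V$ can shrink. The paper avoids this by taking $\Lambda=\langle\tau^{-1}(U),H'\rangle$ via case~\ref{A2-fg} of Theorem~\ref{thm:commensurated_join}, so that $U\le E$.

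The missing ingredient is \cite[Proposition 4.6]{CRW-part-II}: if $E$ is a compactly generated tdlc group and $U\le E$ is compact open, then $\lpc(U/\Core_E(U))$ is finite. With it, the pair in (\ref{item-assu-local-normal}) is as follows.
\begin{enumerate}
\item The member whose quotient has finite local prime content is $K=\Core_E(U)$. It is infinite by (\ref{item-inf-loc-primes}).
\item The other member is $U\cap R$, which is normal in $U$ because $R$ is normal in $C/\!\!/\Gamma$.
\item Their intersection $K\cap R$ is a compact normal subgroup of $R$, so it lies in $\RadLE(R)$. That subgroup is topologically characteristic in $R$, hence normal in $C/\!\!/\Gamma$, hence contained in the finite group $\RadLE(C/\!\!/\Gamma)$. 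So $K\cap R$ is finite; this is the actual role of (\ref{item-LERad-fi}), not ruling out that $R$ be compact-by-discrete.
\item Now (\ref{item-assu-local-normal}) forces $U\cap R$ to be finite, that is, $R$ is discrete.
\end{enumerate}

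You assign the roles the other way round. You take $N=R\cap V$ and hope to show $\lpc(V/N)$ is finite from the compact generation of $E/R$ and the triviality of its discrete residual. Those properties hold whether or not $R$ is discrete, and they do not bound local prime content. For instance, for $H=\Gamma$ the envelope $E$ is a compact open subgroup, so it is profinite and $R$ is trivial. Every compact open subgroup of $E/R=E$ then has local prime content $\lpc(U)$, which is infinite by (\ref{item-inf-loc-primes}). So that step would fail.

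The complementary subgroup $M$ with $M\cap N$ finite is never produced either. The candidates you list (cores of centralizers, quasi-center-type subgroups, Sylow-type products over primes) are not shown to be infinite, closed, normal in $V$, or to meet $N$ finitely.
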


Note that assumption (\ref{item-LERad-fi}) is a global condition on the completion $C / \! \! /\Gamma$, while assumptions (\ref{item-local-assumption-completion}) are local conditions.

\begin{proof}
	Let $H$ be a finitely generated commensurated subgroup of $C$. The subgroup $\tau_{C,\Gamma}^{-1}(U)$ is commensurable with $\Gamma$. Let $\Lambda$ be a representative of the join of $[\Gamma]$ and $[H]$ of the form $\Lambda = \left\langle \tau_{C,\Gamma}^{-1}(U), H' \right\rangle $ with $H'$ a finite index subgroup of $H$. Such a $\Lambda$ exists by case \ref{A2-fg} of Theorem~\ref{thm:commensurated_join}. Let $E$ be the closure of $ \tau_{C,\Gamma}(\Lambda)$ in $C / \! \! /\Gamma$. The group $E$ is generated by $U$ and $ \tau_{C,\Gamma}(H)$, and hence is compactly generated. Since $\Lambda$ is commensurated in $C$ and $\tau_{C,\Gamma}(C)$ is dense in $C / \! \! /\Gamma$, $E$ is commensurated in $C / \! \! /\Gamma$. Therefore $R:= \Res(E)$ is normal in $C / \! \! /\Gamma$ by Lemma  \ref{lem-commens-normalizes-res}. 
	
	Let $K = \Core_E(U)$. As observed in \cite[Proposition 4.6]{CRW-part-II}, whenever $G$ is a compactly generated tdlc group and $V$ is a compact open subgroup, $\lpc(V / \Core_G(V))$ is finite. So $\lpc(U/K)$ is finite. Let $K_R:=K \cap R$. The subgroup $K_R$ is a compact normal subgroup of $R$, and hence $K_R \leq \RadLE(R)$. Since $R$ is normal in $C / \! \! /\Gamma$ and $\RadLE(R)$ is topologically characteristic in $R$, we have that $\RadLE(R)$ is normal in $C / \! \! /\Gamma$. Hence $\RadLE(R)$ is contained in $\RadLE(C / \! \! /\Gamma)$. By (\ref{item-LERad-fi}) the latter is finite, so we deduce $\RadLE(R)$ is finite, and then $K_R$ is finite. Since $R$ is normal in $C / \! \! /\Gamma$, $U \cap R$ is normal in $U$. Since $K_R$ is the intersection between $K$ and $U \cap R$, it follows from (\ref{item-assu-local-normal}) that one of $K$ or $U \cap R$ is finite. The subgroup $K$ is infinite because $\lpc(U)$ is infinite by (\ref{item-inf-loc-primes}) and $\lpc(U/K)$ is finite. So $U \cap R$ is finite. So we deduce $R$ is discrete. Since $E$ is compactly generated, by Theorem \ref{thm-res-disc-res-triv} this implies that compact open normal subgroups of $E$ form a basis of identity neighborhoods. Let $V$ be a compact open normal subgroup of $E$, and $\Gamma' =  \tau_{C,\Gamma}^{-1}(V)$. Since $V$ is normal in $E$ and $\tau_{C,\Gamma}(H) \leq E$, we have $H \leq N_C(\Gamma')$. The subgroup $\Gamma'$ being commensurable with $\Gamma$,  by (\ref{item-normalizer-fi-ind}) the subgroup $N_C(\Gamma')$ contains $\Gamma'$ as a finite index subgroup. In particular $H$ is virtually contained in $\Gamma'$, and hence in $\Gamma$ as well. 
\end{proof}

\subsection{Maximal commensurated subgroups}

For Theorem  \ref{thm-local-contraint-completion} to be applicable, one needs to have at one’s disposal sufficient conditions on a pair $(C,\Gamma)$ ensuring that the associated Schlichting completion has finite LE-radical. The purpose of the present subsection as well as the next one is to provide such conditions.  

\begin{Proposition} \label{prop-completion-max-co}
	Let $C$ be a group, and let $\Gamma_m$ a commensurated subgroup of $C$ such that $\Gamma_m$ is maximal in its commensurability class in $C$. Let $U_m$ be the closure of $\tau_{C,\Gamma_m}(\Gamma_m)$ in  $C / \! \! /\Gamma_m$. Then: \begin{enumerate}
		\item \label{item-Um-max-co} $U_m$ is a maximal compact open subgroup of $C / \! \! /\Gamma_m$;
		\item  \label{item-completion-LE-trivial} $C / \! \! /\Gamma_m$ has trivial LE-radical;
		\item \label{item-completion-QZ-trivial} Suppose in addition that $\Gamma$ has finite index in $N_C(\Gamma)$ for every $\Gamma$ commensurable with $\Gamma_m$. Then $C / \! \! /\Gamma_m$ has trivial quasi-center. 
	\end{enumerate}
\end{Proposition}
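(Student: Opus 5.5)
We work throughout with the Schlichting completion $G_m := C /\!\!/ \Gamma_m$ and $\tau := \tau_{C,\Gamma_m}$. The one basic fact we rely on is the correspondence between open subgroups and commensurable subgroups. Recall that $\tau^{-1}(U_m) = \Gamma_m$ and that $\tau(C)$ is dense. Consequently, for every compact open subgroup $V$ of $G_m$, the preimage $\tau^{-1}(V)$ is commensurable with $\Gamma_m$. Moreover $V = \overline{\tau(\tau^{-1}(V))}$, because $V$ is open and $\tau(C)$ is dense, so $V \cap \tau(C)$ is dense in $V$. Finally, for open subgroups $V \leq W$ we have $V = W$ if and only if $\tau^{-1}(V) = \tau^{-1}(W)$.

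For (\ref{item-Um-max-co}), let $V \geq U_m$ be a compact open subgroup. Then $\tau^{-1}(V)$ contains $\Gamma_m$ and is commensurable with it. By maximality of $\Gamma_m$ in its commensurability class, $\tau^{-1}(V) = \Gamma_m = \tau^{-1}(U_m)$, and the correspondence above gives $V = U_m$.

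For (\ref{item-completion-LE-trivial}), set $R = \RadLE(G_m)$. First we show $R \leq U_m$. The group $R$ is normalized by $U_m$, so $RU_m$ is a subgroup. Take any $r \in R$. The set $\{r\} \cup (R\cap U_m)$ is compact, and $R$ is locally elliptic, so there is a compact subgroup $Q \leq R$ containing $r$ and $R \cap U_m$. Since $Q$ is normalized by nothing in particular, we instead use $\langle U_m, r\rangle \leq U_m R$. This group is compact: write it as $U_m Q'$, where $Q'$ is the closed subgroup generated by the $U_m$-conjugates of $r$. These conjugates form a compact subset of $R$, hence lie in a compact subgroup, so $Q'$ is compact. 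Since $Q'$ is normalized by $U_m$, the product $U_m Q'$ is a compact open subgroup containing $U_m$. By (\ref{item-Um-max-co}), $U_m Q' = U_m$, so $r \in U_m$. Hence $R \leq U_m$. Now $R$ is normal in $G_m$, so $R \leq \Core_{G_m}(U_m)$, and this core is trivial by the defining property of the Schlichting completion. Therefore $R = 1$.

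For (\ref{item-completion-QZ-trivial}), take $g \in \QZ(G_m)$. Its centralizer is open, so it contains a compact open subgroup $V \leq U_m$. Put $\Gamma := \tau^{-1}(V)$, which is commensurable with $\Gamma_m$. The idea is to produce a compact open subgroup normalized by elements of $C$ close to $g$. Choose $V$ normal in $U_m$, which is possible since $U_m$ is profinite. The element $g$ centralizes $V$, so $\langle V, g \rangle$ normalizes $V$, and the normalizer $N_{G_m}(V)$ is an open subgroup containing $g$. Density of $\tau(C)$ then gives $c \in C$ with $\tau(c) \in gV \subseteq N_{G_m}(V)$, and $c$ normalizes $\Gamma$. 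More generally, $\tau^{-1}(N_{G_m}(V)) = N_C(\Gamma)$ (using that $\tau^{-1}(\tau(c)V\tau(c)^{-1}) = c\Gamma c^{-1}$ and the correspondence above). By the hypothesis, $N_C(\Gamma)$ contains $\Gamma$ with finite index. Taking closures, $N_{G_m}(V)$ contains $V$ with finite index, so it is compact. Hence $g$ lies in a compact subgroup.

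The same argument applies to the whole group $\QZ(G_m)$: every finite subset of $\QZ(G_m)$ is centralized by a common open normal subgroup $V$ of $U_m$, hence lies in the compact group $N_{G_m}(V)$. So $\QZ(G_m)$ is locally elliptic in the abstract sense. The main obstacle is then to pass from this to the conclusion, because $\QZ(G_m)$ need not be closed, and we must avoid using $\RadLE$ directly on a non-closed subgroup. The plan here is to argue that $\QZ(G_m) \subseteq U_m$, by the same maximality trick as in (\ref{item-completion-LE-trivial}), using conjugates under $U_m$. Since $\QZ(G_m)$ is normal, we then get $\QZ(G_m) \leq \Core_{G_m}(U_m) = 1$.

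To carry out that step, take $g \in \QZ(G_m)$. Its $U_m$-conjugacy class is finite, because the centralizer of $g$ is open and $U_m$ is compact. Moreover this class is contained in $\QZ(G_m)$. By the previous paragraph, the finitely many conjugates lie in some compact open subgroup $N_{G_m}(V)$, with $V$ normal in $U_m$. Then the group $\langle U_m, g\rangle$ is generated by $U_m$ together with a finite normal-under-$U_m$ set of elements of the compact group $N_{G_m}(V')$. Here $V'$ is taken to be the intersection of the finitely many conjugates of $V$ that arise, which is still normal in $U_m$ and still centralized by all the conjugates of $g$. Because $U_m$ normalizes $V'$, we have $U_m \leq N_{G_m}(V')$. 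Hence $\langle U_m, g\rangle \leq N_{G_m}(V')$, which is compact, so $\langle U_m, g\rangle$ is compact. Maximality in (\ref{item-Um-max-co}) then gives $g \in U_m$, which completes the argument.
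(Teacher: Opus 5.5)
Your proof is correct and follows the paper's argument. Part (1) uses that preimages of compact open subgroups are commensurable with $\Gamma_m$; part (2) is Lemma~\ref{lem-LE-rad-compact}, where your compactness of $U_mQ'$ just makes explicit that $\RadLE(G_m)U_m$ is locally elliptic; and part (3) is Lemma~\ref{lem-max-co-QZ}, where you use maximality of $U_m$ from (1) instead of maximality of $\Gamma_m$ to get $N_{G_m}(V)=U_m$. Your first paragraph of (3) already finishes the job, since $N_{G_m}(V)$ is a compact open subgroup containing $U_m$ and $g$, hence equal to $U_m$; the detour through local ellipticity of $\QZ(G_m)$ and the subgroup $V'$ is unnecessary.
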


\begin{Lemma} \label{lem-max-co-QZ}
	Let $L$ be a tdlc group with a maximal compact open subgroup $U_m$, and with the property that for every open normal subgroup $V$ of $U_m$, $N_L(V) = U_m$. Let $K$ be a compact normal subgroup of $L$, and write $Q = L/K$ and $\pi: L \to Q$ the canonical projection. Then $\pi^{-1}\left(\QZ(Q) \right) \leq \Core_G(U_m)$.
\end{Lemma}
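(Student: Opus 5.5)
The plan is a direct argument that uses the normalizer hypothesis once and the maximality of $U_m$ once. We read $\Core_G(U_m)$ in the statement as $\Core_L(U_m)$, since $L$ is the ambient group here.

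\emph{Step 1: $K \leq \Core_L(U_m)$.} Since $K$ is compact and normal, $KU_m$ is a subgroup of $L$. It is compact, and it is open because it contains $U_m$. By maximality of $U_m$ we get $KU_m = U_m$, so $K \leq U_m$. As $K$ is normal in $L$, this gives $K \leq \Core_L(U_m)$.

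\emph{Step 2: build an open normal subgroup of $U_m$ that is normalized by $x$.} Take $x \in L$ with $\pi(x) \in \QZ(Q)$. The centralizer $C_Q(\pi(x))$ is open in $Q$, so its preimage $P = \pi^{-1}(C_Q(\pi(x)))$ is an open subgroup of $L$ containing $K$. Set $V' = P \cap U_m$; this is open in the compact group $U_m$ and contains $K$ by Step 1. Let $V$ be the intersection of the finitely many $U_m$-conjugates of $V'$. Then $V$ is an open normal subgroup of $U_m$, and $V$ still contains $K$ because $K$ is normal.

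For $v \in V$ we have $\pi(xvx^{-1}) = \pi(v)$, so $xvx^{-1} \in vK \subseteq V$. Hence $xVx^{-1} \leq V$. The same argument with $x^{-1}$ in place of $x$ (note that $\pi(x^{-1})$ is also quasi-central) gives the reverse inclusion. Therefore $x \in N_L(V)$, and the hypothesis $N_L(V) = U_m$ yields $x \in U_m$.

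\emph{Step 3: pass to the core.} Step 2 shows $\pi^{-1}(\QZ(Q)) \leq U_m$. Since $\QZ(Q)$ is normal in $Q$, its preimage $\pi^{-1}(\QZ(Q))$ is normal in $L$. A normal subgroup of $L$ contained in $U_m$ lies in every conjugate of $U_m$, so it lies in $\Core_L(U_m)$, which is the claim.

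The only point needing care is in Step 2: the subgroup $V$ must contain $K$, so that the relation $xvx^{-1} \in vK$ actually places $xvx^{-1}$ inside $V$. This is exactly why Step 1 is done first. It guarantees $K \leq U_m$, so that intersecting with $U_m$ and then taking the normal core in $U_m$ does not lose $K$.
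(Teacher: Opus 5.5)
Your proof is correct and follows the same route as the paper. Normality reduces the claim to $\pi^{-1}(\QZ(Q)) \leq U_m$; lifting the open centralizer of $\pi(x)$ gives an open normal subgroup $V$ of $U_m$ containing $K$ and normalized by $x$, and $N_L(V) = U_m$ concludes. Your Step 1 ($K \leq U_m$ by maximality) makes explicit a point the paper uses tacitly when it writes ``upon reducing $V$'', since the reduced subgroup must still contain $K$ for $x$ to normalize it, and you are right that $\Core_G(U_m)$ should read $\Core_L(U_m)$.
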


\begin{proof}
Since $\pi^{-1}\left(\QZ(Q) \right) $ is  normal in $L$, it suffices to show that $\pi^{-1}\left(\QZ(Q) \right) \leq U_m$. So let $g \in \pi^{-1}\left(\QZ(Q) \right)$. By lifting to $L$ a compact open subgroup of $Q$ contained in the centralizer of $\pi(g)$, we obtain a compact open subgroup $V$ of $L$ such that $V$ contains $K$ and $[g,V] \leq K$. In particular $g$ normalizes $V$. Upon reducing $V$ we can assume that $V$ is contained in $U_m$ and $V$ is normal in $U_m$.  By our assumption this implies $N_L(V) = U_m$. Therefore $g \in U_m$, as desired. 
\end{proof}

\begin{Lemma} \label{lem-LE-rad-compact}
	Let $L$ be a tdlc group with a maximal compact open subgroup $U_m$. Then $\RadLE(L)$ is contained in $U_m$.
\end{Lemma}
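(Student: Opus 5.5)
The plan is to show that the compact open subgroup $R U_m$ contains $U_m$, where $R = \RadLE(L)$, and then invoke maximality of $U_m$. The first step is to show that $RU_m$ is a subgroup. Since $R$ is a closed normal subgroup of $L$, the product $RU_m$ is a subgroup of $L$. It is open because it contains $U_m$.

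The core of the argument is to show that $RU_m$ is compact, or at least contained in a compact open subgroup. Since $R$ is locally elliptic, it is the directed union of its compact open subgroups, and these may be chosen normalized by $U_m$. To see this, I would argue as follows. The subgroup $U_m \cap R$ is compact open in $R$. For any compact subset $S$ of $R$, local ellipticity gives a compact subgroup $W$ of $R$ containing $S \cup (U_m\cap R)$, and $W$ is then open in $R$. Because $U_m$ is compact and normalizes $R$, the $U_m$-conjugates of $W$ vary over a compact family; since $U_m \cap R \le W$ has finite index in $W$, only finitely many conjugates occur. Alternatively, $U_m W U_m$ is a compact set, and local ellipticity of the closed subgroup $R$ can be applied to the compact set $\bigcup_{u\in U_m} uWu^{-1}$. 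This yields a compact subgroup $W' \le R$ containing all $U_m$-conjugates of $W$, and the subgroup generated by those conjugates is normalized by $U_m$ and relatively compact. Taking the closure gives a compact subgroup $W''$ of $R$ normalized by $U_m$. Then $W'' U_m$ is a compact open subgroup containing $U_m$, so maximality forces $W'' U_m = U_m$, hence $W'' \le U_m$. Since $S$ was arbitrary, every compact subset of $R$ lies in $U_m$, and therefore $R \le U_m$.

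I expect the main obstacle to be producing a compact subgroup of $R$ that is normalized by $U_m$, and the argument above handles it. The key point is that the set $\bigcup_{u \in U_m} uWu^{-1}$ is the image of the compact set $U_m \times W$ under the continuous conjugation map, hence compact. Local ellipticity of $R$, which contains this set, then provides a compact subgroup of $R$ containing it. The subgroup generated by the set is $U_m$-invariant and lies inside that compact subgroup, so its closure is a compact $U_m$-invariant subgroup, as required.
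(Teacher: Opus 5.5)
Your proof is correct, but it takes a slightly different route from the paper's. The paper observes that $O=\RadLE(L)U_m$ is locally elliptic, quoting the stability of local ellipticity under extensions ($O/\RadLE(L)\cong U_m/(U_m\cap\RadLE(L))$ is compact). Then for each $g\in\RadLE(L)$ the closure of $\langle g,U_m\rangle$ is a compact open subgroup containing $U_m$, hence equal to $U_m$. You avoid the extension result. Using compactness of $U_m$, continuity of conjugation and local ellipticity of $R$ alone, you build a compact $U_m$-invariant subgroup $W''$ of $R$ containing a given compact set, so that $W''U_m$ is a compact open subgroup containing $U_m$. In effect you reprove the special case of extension-stability that the paper cites (locally elliptic by compact). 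This makes your argument self-contained at the cost of a few extra lines; the paper's version is shorter but rests on a general fact.

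A few cosmetic points.
\begin{itemize}
\item The auxiliary subgroup $W$ is unnecessary. You can apply local ellipticity of $R$ directly to the compact set $\{usu^{-1} : u\in U_m,\ s\in S\}$, or simply take $S=\{g\}$ for $g\in R$.
\item The justification that only finitely many $U_m$-conjugates of $W$ occur ``since $U_m\cap R$ has finite index in $W$'' is not valid as written. The correct reason is that the normalizer in $U_m$ of the compact open subgroup $W$ of $R$ is open in $U_m$. Your ``alternatively'' argument does not use this claim, so you can just delete that sentence.
\item Your opening sentence refers to $RU_m$ as compact before this is known. What you actually prove is that every compact subset of $R$ lies in $U_m$, which gives $R\le U_m$ directly.
\end{itemize}
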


\begin{proof}
	Consider the subgroup $O = \RadLE(L) U_m$. Since being locally elliptic is stable under extensions, $O$ remains locally elliptic. So for $g \in \RadLE(L)$, the subgroup generated by $g$ and $U_m$ must be compact. By maximality it has to be equal to $U_m$, and hence $g \in U_m$.
\end{proof}

\begin{proof}[Proof of Proposition  \ref{prop-completion-max-co}]
(\ref{item-Um-max-co}) follows from the fact that the pre-image in $C$ of a compact open subgroup of $C / \! \! /\Gamma_m$ is commensurable with $\Gamma_m$. By Lemma \ref{lem-LE-rad-compact} the LE-radical of $C / \! \! /\Gamma_m$ is contained in $U_m$. Since $U_m$ has trivial normal core in $C / \! \! /\Gamma_m$, conclusion (\ref{item-completion-LE-trivial}) follows. 

Under the additional assumption that the commensurability class of $\Gamma_m$ in $C$ is stable under taking normalizers, it follows that the normalizer in $C / \! \! /\Gamma_m$ of any open normal subgroup $V$ of $U_m$ is exactly $U_m$. (\ref{item-completion-QZ-trivial}) therefore follows from Lemma \ref{lem-max-co-QZ} using again that $U_m$ has trivial normal core in $C / \! \! /\Gamma_m$. 
\end{proof}

\begin{Remark}
The combination of Proposition  \ref{prop-completion-max-co} and 	Theorem \ref{thm-local-contraint-completion} yields a method to show the conclusion of Theorem \ref{thm-local-contraint-completion} holds true, provided the local conditions on the completion in Theorem \ref{thm-local-contraint-completion} can be checked. We do not develop further this approach in the present paper. Let us just mention that, in the case of automorphism groups of regular trees, based on the existence of maximal cocompact lattices in $\Aut(T_d)$ (as follows from \cite[Theorem 1.4]{Trofimov-Weiss-95}) together with the CSP discussed in \S  \ref{subsec:U(F)}, this approach can also lead to a proof of Theorem \ref{thm-main-trees}. 
\end{Remark}

\subsection{The proof of Theorem \ref{thm-intro-main-completion}}

The first goal of this subsection is to  provide another way to ensure assumption (\ref{item-LERad-fi}) in Theorem  \ref{thm-local-contraint-completion} holds true (see Proposition~\ref{prop:LE-radical-restriction} and
Remark \ref{rmk-non-cpt-LE}). Unlike in the previous subsection, the approach here does not rely on any maximality assumption. Instead, we make use of the following result from \cite{CLB-covol}. For a locally compact group $G$, we denote by $\Sub(G)$ the space of closed subgroups of $G$, equipped with the Chabauty topology (see e.g.\ \cite{Schochetman-71}). A subgroup $K$ of a tdlc group $G$ is called l\textbf{ocally normal} in $G$ if there is a compact open subgroup $U$ of $G$ such that $U$ normalizes $K$.

\begin{Proposition}[{See \cite[Proposition 3.6]{CLB-covol}}]\label{prop-approx-disc-subgroups}
Let $G$ be a compactly generated tdlc group that admits a compact open subgroup with trivial normal core, and such that $\QZ(G)$ is discrete. Suppose $G$ is non-discrete, and there exists a sequence $(\Gamma_n)$ of discrete subgroups of $G$ that converges in $\Sub(G)$ to a finite index subgroup of $G$. Then $G$ admits an infinite compact locally normal subgroup $K$ that is pro-$p$ for some prime $p$. 
\end{Proposition}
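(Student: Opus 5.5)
The plan is to extract, from the approximating discrete subgroups, large finite subgroups of a fixed compact open subgroup, and then to pass to Chabauty limits of characteristic $p$-subgroups of these finite groups.

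First, I would reduce to the case where the $\Gamma_n$ converge to $G$ itself. Let $G_0$ be the finite index limit subgroup. Since $\Sub(G)$ is compact and $\{\Gamma_n \cap G_0\}$ accumulates only on subgroups of $G_0$, after passing to a subsequence I may replace $\Gamma_n$ by $\Gamma_n\cap G_0$. The group $G_0$ is open of finite index, so it is again compactly generated and non-discrete. It has discrete quasi-center, since $\QZ(G_0)=\QZ(G)\cap G_0$. It has a compact open subgroup with trivial normal core, by Lemma \ref{lem-faith-Cay-Ab-coco}. A compact locally normal subgroup of $G_0$ is also one of $G$. So we may assume $\Gamma_n \to G$.

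Next, I would fix a compact open subgroup $U$ and set $F_n := \Gamma_n\cap U$. Because $U$ is open and closed, Chabauty convergence $\Gamma_n\to G$ implies $F_n \to U$ in $\Sub(U)$. Each $F_n$ is finite, since $\Gamma_n$ is discrete and $U$ is compact. Concretely, for every open normal subgroup $V$ of $U$ we have $F_nV=U$ for all large $n$. Moreover, for a fixed finite symmetric generating set $S$ of $G$ modulo $U$, the group $\Gamma_n$ eventually contains elements $s_n\in sU$ close to each $s\in S$.

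The core of the argument is to take, for each prime $p$, the largest normal $p$-subgroup $O_p(F_n)$. After passing to subsequences, it has a Chabauty limit $P_p\le U$. A Chabauty limit of finite $p$-groups is a pro-$p$ group. Since $O_p(F_n)$ is normalized by $F_n$ and $F_n\to U$, the limit $P_p$ is normalized by $U$, hence is a compact locally normal pro-$p$ subgroup. It therefore suffices to show that $P_p$ is infinite for some $p$. I would argue by contradiction. If every $P_p$ is finite, then the Fitting subgroups of the $F_n$ are asymptotically negligible. By the structure of the generalized Fitting subgroup $F^*(F_n)$, the groups $F_n$ are then controlled by their layers $E(F_n)$, which are central products of non-abelian finite simple groups. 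Passing to limits, the layers yield a locally normal subgroup $M\le U$ which is a (possibly infinite) product of finite quasi-simple groups with centralizer in $U$ essentially trivial. So $U$ would be virtually a product of finite simple groups.

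The main obstacle is ruling out this last possibility. This is where the global hypotheses must be used. The elements $s_n\in\Gamma_n$ converge to the generators $s$, and they normalize $\Gamma_n$, hence nearly normalize $F_n$. So conjugation by $G$ permutes the simple factors of $M$ up to commensurability, in a way compatible with the finite permutation actions. I would try to show that this forces one of two outcomes. In the first, some simple factor has open centralizer in $G$, producing non-discrete quasi-central elements and contradicting discreteness of $\QZ(G)$. In the second, the factors can be grouped into a compact normal subgroup lying in the core of $U$, contradicting trivial normal core together with non-discreteness. Making this "almost normalization" argument precise, uniformly in $n$, is the step I expect to require the most care.
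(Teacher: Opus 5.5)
The paper does not prove this statement itself; it quotes it from \cite{CLB-covol}. So your argument has to stand on its own.

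Your preliminary steps are fine: passing to $G_0$, the convergence $F_n=\Gamma_n\cap U\to U$, and the fact that the limits $P_p$ of the $O_p(F_n)$ are pro-$p$ and normalized by $U$. The genuine gap is the step from ``every $P_p$ is finite'' to ``$U$ is virtually a product of finite simple groups''. That step uses only that the $F_n$ are finite subgroups converging to $U$, and for such sequences it is false. The property $C_{F_n}(F^*(F_n))\le F^*(F_n)$ does not survive Chabauty limits: limits of centralizers lie in the centralizer of the limit, but not conversely. In particular $F^*(F_n)$ can converge to $\{1\}$.

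For example, let $U$ be the group of automorphisms of the $5$-ary rooted tree all of whose local actions lie in $\mathrm{Alt}(5)$. Let $F_n$ be the finite subgroup of elements with trivial local action at every vertex of depth $\geq n$.
\begin{itemize}
\item Then $F_n\to U$, and $F_n$ is an $n$-fold iterated wreath product of $\mathrm{Alt}(5)$.
\item Its unique minimal normal subgroup is the base group $\mathrm{Alt}(5)^{5^{n-1}}$ acting at depth $n-1$. Hence $O_p(F_n)=1$ for every $p$, and $F^*(F_n)$ is this base group.
\item So $F^*(F_n)\to\{1\}$, because it fixes level $n-1$ pointwise.
\item Yet no open subgroup of $U$ has a non-trivial finite normal subgroup, so $U$ is not virtually a product of finite groups.
\item Moreover $U^6$ is the pointwise stabilizer of a ball of radius $1$ in $U(\mathrm{Alt}(6))$. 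That group is compactly generated and non-discrete, has trivial quasi-centre, and has a compact open subgroup with trivial core.
\end{itemize}
So the local data $F_n\to U$ cannot by themselves give the conclusion. The discreteness of $\Gamma_n$ in $G$ has to enter at this point, not afterwards.

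Conversely, the case you single out as the main obstacle is immediate. Suppose an open subgroup $V$ of $U$ were an infinite product $\prod_i S_i$ of finite groups. Each $S_i$ is normal in $V$ with $C_V(S_i)$ of finite index, hence open, so $S_i\le\QZ(G)$. Since the $S_i$ accumulate at $1$, this contradicts discreteness of $\QZ(G)$, with no need for the elements $s_n$.

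What is really missing is a mechanism that prevents the normal subgroups of $F_n$ from sinking into ever smaller open subgroups. A natural source is homogeneity. For large $n$, $\Gamma_n$ acts vertex-transitively on a faithful Cayley--Abels graph of $G$ with vertex stabilizer $F_n$. On each ball of fixed radius it induces the same permutation group as $U$. This lets you transport normal subgroups of stabilizers of distant vertices back to the base vertex; arguments of this kind underlie Thompson--Wielandt-type theorems.

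There is also a smaller issue. Your claim that the Fitting subgroups become negligible when each $P_p$ is finite requires first reducing to finitely many primes. One way is via finiteness of $\lpc(U)$, which holds because $U$ has trivial core in the compactly generated group $G$ (see \cite[Proposition 4.6]{CRW-part-II}). Without this, $U=\prod_p \Z/p\Z$ with $F_n=\prod_{p\le n}\Z/p\Z$ already breaks the implication.
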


The following lemma is a variation of \cite[Lemma 3.14]{CRW-part-II}. We say a tdlc group $G$  is \textbf{locally pro-$p$} if some compact open subgroup of $G$ is pro-$p$. 

\begin{Lemma} \label{lem-loc-normal-pro-p-loc-pro-p}
	Let $G$ be a $\sigma$-compact tdlc group, $K \leq G$ a compact locally normal subgroup of $G$ such that the abstract normal subgroup of $G$ generated by $K$ is open in $G$. Then there is a family $\left\lbrace L_1, \ldots, L_n \right\rbrace $ of open subgroups of conjugates of $K$ that normalize each other and such that $L_1 \cdots L_n$ is a compact open subgroup of $G$. In particular if $K$ is pro-$p$ then $G$ is locally pro-$p$. 
\end{Lemma}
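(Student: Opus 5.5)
Let $N$ denote the abstract normal subgroup of $G$ generated by $K$. It is the union of the increasing sequence of products of finitely many conjugates of $K$, and the plan is to use $\sigma$-compactness to show that one such finite product already contains a compact open subgroup.

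\textbf{Step 1: $N$ is an increasing union of compact sets.} Fix a compact open subgroup $U$ of $G$ that normalizes $K$. Conjugates $gKg^{-1}$ depend only on the coset $gU$, and $G$ is $\sigma$-compact, so $G/U$ is countable. Hence there are only countably many conjugates $K_1,K_2,\ldots$ of $K$. Then $N=\bigcup_m N_m$, where $N_m$ is the set of products of at most $m$ elements from $K_1\cup\dots\cup K_m$. Each $N_m$ is compact, hence closed.

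\textbf{Step 2: Baire category.} Since $N$ is open, it is locally compact and therefore Baire. So some $N_m$ has nonempty interior in $N$, and hence in $G$. Translating by an element of $N_m^{-1}$ and enlarging $m$ to $2m$, we find $N_{2m}\supseteq V$ for some identity neighbourhood $V$, and in fact for a compact open subgroup $W\le V$.

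\textbf{Step 3: passing to mutually normalizing open subgroups.} We now want the required family $L_1,\dots,L_n$. Each $K_i$ is normalized by a compact open subgroup $U_i=g_iUg_i^{-1}$.

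Take $W'\le W\cap U_1\cap\dots\cap U_{2m}$ compact open, and set
\[
L_i:=K_i\cap W' \quad (i\le 2m).
\]
Each $L_i$ is an open subgroup of $K_i$. For $j\neq i$, the group $L_j$ normalizes $K_i$ (since $L_j\le U_i$) and normalizes $W'$ (it lies in $W'$). Hence $L_j$ normalizes $L_i$, so the $L_i$ normalize each other. It follows that $P:=L_1\cdots L_{2m}$ is a compact subgroup.

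It remains to check that $P$ is open. The inclusion $W\subseteq K_1\cdots K_{2m}$ (as sets, repeating factors) gives this only after intersecting with $W'$, and that is the delicate point. The approach I would try:
\begin{enumerate}
\item Replace $W$ by $W'$ in Step 2, applying Baire to the countable closed cover $\{(K_{i_1}\cdots K_{i_r})\cap W'\}$ of $W'$ (which works since $W'\le N$).
\item Use the mutual normalization to rewrite products as $K_{i_1}\cdots K_{i_r}\cap W'$.
\item Show that $P$ has nonempty interior; a compact subgroup with nonempty interior is open.
\end{enumerate}

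\textbf{Main obstacle.} The most difficult step will be showing that $(K_{i_1}\cdots K_{i_r})\cap W'$ is contained in the subgroup generated by the $K_{i_j}\cap W'$ up to finite index. Here the normalization of each $K_i$ by $W'$ should be used, together with the following fact: the $K_i$ normalize one another locally, so their product modulo the open subgroups $K_i\cap W'$ is finite.

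\textbf{The pro-$p$ consequence.} If $K$ is pro-$p$, then each $L_i$ is pro-$p$. A product of finitely many mutually normalizing pro-$p$ subgroups is pro-$p$: inductively, $L_1\cdots L_{k+1}$ is an extension of a pro-$p$ normal subgroup by a quotient of $L_{k+1}$. Hence $P$ is a compact open pro-$p$ subgroup, and $G$ is locally pro-$p$.
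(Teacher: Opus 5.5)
Your Steps 1 and 2, the construction of the mutually normalizing subgroups $L_i=K_i\cap W'$, and the pro-$p$ deduction are all correct and follow the same route as the paper. The gap is the step you flag yourself: you never prove that $P=L_1\cdots L_{2m}$ is open. The plan you sketch has three parts: re-applying Baire inside $W'$, comparing $(K_{i_1}\cdots K_{i_r})\cap W'$ with the group generated by the $K_{i_j}\cap W'$, and using that ``the $K_i$ normalize one another locally''. None of these is proved, and they aim at the wrong fact. Whether the $K_i$ normalize each other plays no role, and there is no need to intersect the products with $W'$.

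The missing observation is that $P$ itself normalizes every $K_i$ with $i\le 2m$, because $P\le W'\le U_i$. Hence $K_iP=PK_i$. Since $L_i$ is open in the compact group $K_i$, you also have $K_i\subseteq F_iL_i\subseteq F_iP$ for a finite set $F_i$. By induction on $r$,
\[
K_{i_1}\cdots K_{i_r}K_{i_{r+1}}\subseteq FPK_{i_{r+1}}=FK_{i_{r+1}}P\subseteq FF_{i_{r+1}}P ,
\]
so each product $K_{i_1}\cdots K_{i_r}$ with indices $\le 2m$ lies in finitely many left cosets of $P$. As $N_{2m}$ is a finite union of such products, the open set $W\subseteq N_{2m}$ is covered by finitely many closed sets $fP$. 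One of them therefore has non-empty interior, so $P$ has non-empty interior and is open.

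This is exactly the paper's argument. It picks $K_1,\dots,K_n$ with $K_1\cdots K_n$ of non-empty interior and sets $L_i=U\cap K_i$ for a compact open $U$ normalizing all the $K_i$. It then notes that $L=L_1\cdots L_n\le U$ normalizes each $K_i$, so $K_1\cdots K_n$ is covered by finitely many cosets of $L$.
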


\begin{proof}
Since $G$ is $\sigma$-compact and $K$ is locally normal, there are countably many conjugates $(K_n)$ of $K$ in $G$. The subgroup they generate is open by assumption, so by the Baire category theorem there are $K_1, \ldots, K_n$ such that the subset $K_1 \cdots K_n$ has non-empty interior. Let $U$ be a compact open subgroup of $G$ that normalizes $K_i$ for every $i$, and let $L_i = U \cap K_i$. Then the subgroups $L_i$ normalize each other, and hence $L = L_1 \cdots L_n$ is a subgroup of $G$, which normalizes each $K_i$ since $L \leq U$. This implies $K_1 \cdots K_n$ is covered by finitely many cosets of $L$. Therefore $L$ also has non-empty interior, and hence is open. For the last assertion, if $K$ is pro-$p$ then so is every $L_i$, and hence so is $L$.
\end{proof}

\begin{Proposition}\label{prop:LE-radical-restriction}
Let $\Gamma$ be a finitely generated cocompact lattice of a tdlc group $G$, and $C$ a dense subgroup of $G$ such that $\Gamma \leq C \leq \Comm_G(\Gamma)$. Suppose that $\QZ(G)$ is discrete, $\Res(G)$ is open and non-compact, and every abstract normal subgroup of $G$ is contained in $\QZ(G)$ or contains $\Res(G)$. Suppose also the group $C / \! \! /\Gamma $ admits an open subgroup $O$ that is locally elliptic, commensurated, and non-compact. Then there is a prime $p$ such that $G$ is locally pro-$p$ and $G$ is not locally finitely generated. 
\end{Proposition}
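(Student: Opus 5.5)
The plan is to extract from the non-compact locally elliptic subgroup $O$ an increasing sequence of cocompact lattices of $G$ whose union is not discrete. Its Chabauty limit then feeds into Proposition~\ref{prop-approx-disc-subgroups}, and a Frattini argument finishes.

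\textbf{Step 1: the standing hypotheses.} Write $\tau=\tau_{C,\Gamma}$. Since $\Gamma$ is finitely generated and cocompact, $G$ is compactly generated, hence $\sigma$-compact. By Lemma~\ref{lem-simplicity-assumptions-faith-Cay-Ab}, $G$ has a compact open subgroup with trivial normal core. Indeed, a closed normal subgroup either lies in the discrete group $\QZ(G)$ or contains $\Res(G)$. The same holds for every closed cocompact subgroup of $G$ by Lemma~\ref{lem-faith-Cay-Ab-coco}. Moreover $G$ is non-discrete, because $\Res(G)$ is open and non-compact.

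\textbf{Step 2: the lattices $\Lambda_n$.} Since $O$ is locally elliptic and non-compact, I will write $O$ (or a cofinal part of it, using that $\tau(C)$ is dense) as a strictly increasing union of compact open subgroups $U_1<U_2<\cdots$. Set $\Lambda_n=\tau^{-1}(U_n)$. Each $\Lambda_n$ is commensurable with $\Gamma$, hence is a cocompact lattice of $G$, and $[\Lambda_{n+1}:\Lambda_n]\ge 2$. So the covolumes of the $\Lambda_n$ tend to $0$. Let $L$ be the closure of $\Lambda=\bigcup_n\Lambda_n$, which is also the Chabauty limit of the $\Lambda_n$. A discrete subgroup containing all the $\Lambda_n$ would have positive covolume bounded above by every $\mathrm{covol}(\Lambda_n)$, which is impossible. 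Hence $L$ is non-discrete. Moreover $\Lambda=\tau^{-1}(O)$ is commensurated by $C$, since $O$ is commensurated and $\tau^{-1}$ of a compact open subgroup lies in $[\Gamma]$. So $L$ is commensurated by the dense subgroup $C$, and $\Res(L)$ is normal in $G$ by Lemma~\ref{lem-commens-normalizes-res}. Finally $L$ is compactly generated, since it contains $\Gamma$ cocompactly.

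\textbf{Step 3: $L$ is open of finite index.} By the hypothesis on abstract normal subgroups, either $\Res(L)\le\QZ(G)$ or $\Res(G)\le\Res(L)$.

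In the first case $\Res(L)$ is discrete, so by Theorem~\ref{thm-res-disc-res-triv} the group $L$ has a compact open normal subgroup $K$. Then the $\Lambda_nK$ form an increasing chain of open subgroups of $L$ containing the finite-index subgroup $\Gamma K$, so the chain stabilizes. Consequently the finite groups $\Lambda_n\cap K$ must grow without bound. I expect to derive a contradiction here via the compact-normal-subgroup machinery, namely Lemma~\ref{lem-faith-Cay-Ab-variety} together with the argument of Proposition~\ref{prop-latt-cpct-by-discrete}, which shows that elements of $C\cap K$ virtually centralize $\Gamma$. Combined with Proposition~\ref{prop-latt-fi-normalizer} and the trivial core of $U_n$, this should bound $|\Lambda_n\cap K|$.

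In the second case $\Res(G)\le L$, so $L$ is open. Being cocompact as well, $L$ has finite index in $G$.

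This case analysis is the step I consider most delicate.

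\textbf{Step 4: conclusion.} Now Proposition~\ref{prop-approx-disc-subgroups} applies to $G$ with the sequence $(\Lambda_n)$. It yields an infinite compact locally normal pro-$p$ subgroup $K$. The abstract normal subgroup generated by $K$ is not contained in the discrete group $\QZ(G)$, so it contains $\Res(G)$ and is therefore open. Lemma~\ref{lem-loc-normal-pro-p-loc-pro-p} then shows that $G$ is locally pro-$p$.

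For non-local finite generation, suppose for contradiction that some compact open pro-$p$ subgroup $U\le L$ is finitely generated. Then its Frattini subgroup $\Phi(U)$ is open. Since $\Lambda_n\to L$ in the Chabauty topology, the finite groups $F_n=\Lambda_n\cap U$ eventually meet every coset of $\Phi(U)$ in $U$, so $F_n\Phi(U)=U$. This forces $F_n$ to be dense in $U$, hence equal to $U$ since $F_n$ is finite and therefore closed. Then $U$ is finite, contradicting the non-discreteness of $G$. Since all compact open subgroups of $G$ are commensurable, no compact open subgroup of $G$ is finitely generated.
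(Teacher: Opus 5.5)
Steps 1, 2, the second case of Step~3, and Step~4 are the paper's argument: ascending lattices $\tau^{-1}(U_n)$ with non-discrete Chabauty limit $L$, normality of $\Res(L)$ in $G$, openness of $L$ when $\Res(G)\le\Res(L)$, then Proposition~\ref{prop-approx-disc-subgroups}, Lemma~\ref{lem-loc-normal-pro-p-loc-pro-p} and the Frattini argument. The genuine gap is the first case of Step~3, $\Res(L)\le\QZ(G)$, which you leave as an expectation.

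Your deduction that $|\Lambda_n\cap K|\to\infty$ is correct. But the tools you propose to contradict it (Lemma~\ref{lem-faith-Cay-Ab-variety}, the argument of Proposition~\ref{prop-latt-cpct-by-discrete}, Proposition~\ref{prop-latt-fi-normalizer}) only see $L$, $K$ and the $\Lambda_n$, and that configuration is consistent on its own. Take $\Delta$ finitely generated and residually finite, $F$ a non-trivial finite group, and $L=F^{\Delta}\rtimes\Delta$ with the shift action. Then $L$ has a compact open subgroup with trivial core, namely $\{k : k(1)=1\}$, and $\Delta$ is a cocompact lattice with dense commensurator. Let $\Lambda_n=K_n\rtimes\Delta$, where $K_n$ is the finite group of $N_n$-invariant functions and $(N_n)$ is a decreasing chain of finite index normal subgroups with trivial intersection. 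Then $(\Lambda_n)$ is a strictly ascending chain of lattices with dense union and $|\Lambda_n\cap F^{\Delta}|\to\infty$. So the contradiction must come from the hypotheses on $G$. Note that you never use that $\Res(G)$ is non-compact, except to get non-discreteness of $G$.

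The missing ingredient is Proposition~\ref{prop-coc-cpt-normal}. In the first case, Theorem~\ref{thm-res-disc-res-triv} gives a compact open normal subgroup $K$ of $L$. Since $L$ is closed and cocompact in $G$, $K$ lies in a compact normal subgroup $M$ of $G$. By hypothesis, either $M\le\QZ(G)$, so $M$ is discrete and hence finite, or $M$ contains $\Res(G)$, which is impossible because $\Res(G)$ is non-compact. Hence $K$ is finite; being open in $L$, this forces $L$ to be discrete, contradicting Step~2. This is exactly how the paper closes this case, and with it your proof goes through.

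A minor slip: $\Gamma$ itself need not lie in $L$, only a finite index subgroup of it does. So use $\Lambda_1$, which is finitely generated and cocompact, both for compact generation of $L$ and in place of $\Gamma K$.
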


\begin{proof}
Denote by $\tau_{C,\Gamma} : C \to C/\! \!/\Gamma$ the homomorphism from $C$ to $C/\! \!/\Gamma$. Consider an open and commensurated subgroup $O$ of $C/\! \!/\Gamma$ that is written as an increasing union $O = \bigcup O_n$ of compact open subgroups of $C/\! \!/\Gamma$. Set $\Gamma_n = \tau_{C,\Gamma}^{-1}(O_n)$ and $\Lambda = \tau_{C,\Gamma}^{-1}(O)$. Then $(\Gamma_n)$ forms a sequence of cocompact lattices of $G$ ascending to $\Lambda$, and $\Lambda$ is commensurated in $C$ because $O$ is commensurated in $C/\! \!/\Gamma$. Let $L := \overline{\Lambda}$ be the closure of $\Lambda$ in $G$. Since $(\Gamma_n)$ is ascending, $L$ is equal to the limit of $(\Gamma_n)$ in $\Sub(G)$ \cite[Theorem I]{Schochetman-71}. Since $O$ is not compact, no member of the sequence $\Gamma_n$ has finite index in $\Lambda$, and $L$ is non-discrete. Also $L$ is compactly generated because $L$ is cocompact in $G$.

Since $\Lambda$ is commensurated in $C$, the subgroup $L$ is also commensurated by $C$. Hence $C$ normalizes $\Res(L)$ by Lemma  \ref{lem-commens-normalizes-res}, and $C$ being dense in $G$ and $\Res(L)$ being closed, $\Res(L)$ is normal in $G$. Since $L$ is compactly generated, if $\Res(L)$ is discrete then by Theorem \ref{thm-res-disc-res-triv}  one deduces that $L$ is compact-by-discrete. Using Proposition \ref{prop-coc-cpt-normal} and the fact that compact normal subgroups of $G$ are finite (consequence of the present assumptions), we deduce $L$ is discrete, which is a contradiction. Therefore $\Res(L)$ cannot be discrete, and therefore it is open since $\Res(L)$ is normal in $G$. In particular $L$ is open, and therefore $L$ has finite index in $G$. Therefore we are in the situation of Proposition \ref{prop-approx-disc-subgroups}. We deduce that $G$ has an infinite compact locally normal subgroup $K$ that is pro-$p$. Since $K$ is compact infinite, the abstract normal subgroup of $G$ generated by $K$ is non-discrete. Therefore it is open, and we are in position to apply Lemma  \ref{lem-loc-normal-pro-p-loc-pro-p}. We deduce $G$ has a compact open subgroup $U$ that is pro-$p$. From here, arguing as in the end of the proof of Theorem F in \cite{CLB-covol}, we deduce $U$ cannot be  finitely generated. We repeat the argument for completeness. Upon replacing $U$ by $U \cap L$ we can assume $U \leq L$. Since $U$ is open, taking the intersection with $U$ defines a continuous map on $\Sub(G)$ \cite[Proposition 3]{Schochetman-71}. Hence $(\Gamma_n \cap U)$ converges to $U$. Recall that finitely generated pro-$p$ groups have open Frattini subgroup. Hence if $U$ were finitely generated we would have $U = \Phi(U) (\Gamma_n \cap U)$ for $n$ large enough, and hence  $U = \Gamma_n \cap U$. Since $U$ is open and $\Gamma_n$ is discrete, this is absurd. 
\end{proof}

\begin{Remark} \label{rmk-non-cpt-LE}
The existence in $C / \! \! /\Gamma $ of an open subgroup $O$ that is locally elliptic, commensurated and non-compact, holds true whenever $C / \! \! /\Gamma $ has a non-compact LE-radical. Indeed, $O = \RadLE(C / \! \! /\Gamma ) U$, where $U$ is a compact open subgroup of $C / \! \! /\Gamma $, is such a subgroup. 
\end{Remark}

The following result is the main result of this subsection. It combines the approach of Section \ref{sec-not-loc-fg} together with Theorem  \ref{thm-local-contraint-completion} and  Proposition~\ref{prop:LE-radical-restriction}. 

\begin{Theorem} \label{thm-combined-arguments}
Let $\Gamma$ be a finitely generated cocompact lattice of a tdlc group $G$, and $C$ a dense subgroup of $G$ such that $\Gamma \leq C \leq \Comm_G(\Gamma)$. Suppose $\QZ(G)$ is discrete, $\Res(G)$ is open, and every abstract normal subgroup of $G$ is contained in $\QZ(G)$ or contains $\Res(G)$. Suppose also the $C$-congruence completion $\widehat{\Gamma}^C$ of $\Gamma$ has the following properties:
	\begin{enumerate}
		\item \label{item-many-primes} $\lpc(\widehat{\Gamma}^C)$ is infinite;
		\item \label{item-no-commuting-normal}  there is no pair $(M,N)$ of infinite closed normal subgroups of $\widehat{\Gamma}^C$ such that $M \cap N$ is finite and $\lpc(\widehat{\Gamma}^C/N)$ is finite.
		\item \label{item-no-finite-exponent-normal} there is no infinite closed normal subgroup $M$ of $\widehat{\Gamma}^C$ such that $M$ embeds as a closed subgroup of $F^I$ for some finite group $F$ and set $I$.
	\end{enumerate}
	Then every finitely generated commensurated subgroup of $C$ is virtually contained in $\Gamma$.
\end{Theorem}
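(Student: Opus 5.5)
The plan is to deduce the statement from Theorem~\ref{thm-local-contraint-completion}. I apply it with $U$ the closure of $\tau_{C,\Gamma}(\Gamma)$ in $C/\!\!/\Gamma$, which is a compact open subgroup isomorphic to $\widehat{\Gamma}^C$. There is one case where the hypotheses of that theorem may fail, namely when the LE-radical of $C/\!\!/\Gamma$ is non-compact. I handle that case separately, through Theorem~\ref{thm-main-not-loc-fg} and Proposition~\ref{prop:LE-radical-restriction}.

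\emph{Preliminary reductions.} Since $\Gamma$ is finitely generated and cocompact, $G$ is compactly generated. Every closed normal subgroup of $G$ either lies in $\QZ(G)$, which is discrete, or contains $\Res(G)$. So Lemma~\ref{lem-simplicity-assumptions-faith-Cay-Ab} applies, and $G$ has a compact open subgroup with trivial normal core.

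If $\Res(G)$ is compact, then it is a compact open normal subgroup. By Lemma~\ref{lem-faith-Cay-Ab-variety} together with the trivial core, some open subgroup of it is normal with trivial intersection, which I expect forces $G$ to be discrete. In that case $\Gamma$ has finite index in $G \supseteq C$ and the conclusion is trivial. Hence I may assume $\Res(G)$ is non-compact.

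\emph{Checking the hypotheses of Theorem~\ref{thm-local-contraint-completion}.} Hypothesis~(\ref{item-normalizer-fi-ind}): every $\Gamma'$ commensurable with $\Gamma$ is again a finitely generated cocompact lattice of $G$, so Proposition~\ref{prop-latt-fi-normalizer} gives $[N_G(\Gamma'):\Gamma']<\infty$, and a fortiori $[N_C(\Gamma'):\Gamma']<\infty$. Hypotheses~(\ref{item-inf-loc-primes}) and~(\ref{item-assu-local-normal}) are exactly conditions~(\ref{item-many-primes}) and~(\ref{item-no-commuting-normal}) of the statement, read through the identification $U\simeq\widehat{\Gamma}^C$. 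It remains to verify hypothesis~(\ref{item-LERad-fi}), that $R=\RadLE(C/\!\!/\Gamma)$ is finite, and for this I split into two cases.

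\emph{Case $R$ compact.} Then $R$ is a compact normal subgroup of $C/\!\!/\Gamma$. Since $U$ has trivial normal core there, Lemma~\ref{lem-faith-Cay-Ab-variety} embeds $R$ as a closed subgroup of some $F^I$. Hence $R\cap U$ is a closed normal subgroup of $U\simeq\widehat{\Gamma}^C$ embedding in $F^I$, so it is finite by condition~(\ref{item-no-finite-exponent-normal}). Since $R\cap U$ is open in the compact group $R$, $R$ is finite. Theorem~\ref{thm-local-contraint-completion} then concludes.

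\emph{Case $R$ non-compact.} This is the main obstacle. By Remark~\ref{rmk-non-cpt-LE} there is an open, locally elliptic, commensurated, non-compact subgroup $O$ of $C/\!\!/\Gamma$. Proposition~\ref{prop:LE-radical-restriction} applies, since $\Res(G)$ is open and non-compact, and gives that $G$ is not locally finitely generated. I then verify the hypotheses of Theorem~\ref{thm-main-not-loc-fg}:
\begin{enumerate}
\item $G$ has a compact open subgroup with trivial normal core, as shown in the preliminary reductions.
\item If a closed normal subgroup $N$ lies in the discrete group $\QZ(G)$, take $K=\{1\}$, which is open in $N$ and normal in $G$; otherwise $N\geq\Res(G)$.
\item Any closed $J\geq\Res(G)$ is open in $G$, because $\Res(G)$ is open. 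An open subgroup shares its compact open subgroups with $G$, so $J$ is not locally finitely generated.
\end{enumerate}
Theorem~\ref{thm-main-not-loc-fg} then gives the conclusion in this case as well.
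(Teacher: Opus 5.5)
Your proof uses the same ingredients as the paper's: Lemma~\ref{lem-simplicity-assumptions-faith-Cay-Ab}, Remark~\ref{rmk-non-cpt-LE} with Proposition~\ref{prop:LE-radical-restriction}, Lemma~\ref{lem-faith-Cay-Ab-variety} with condition~(\ref{item-no-finite-exponent-normal}), and then either Theorem~\ref{thm-local-contraint-completion} or Theorem~\ref{thm-main-not-loc-fg}. You split according to whether $\RadLE(C/\!\!/\Gamma)$ is compact. The paper instead splits according to whether $G$ is locally finitely generated, and uses Proposition~\ref{prop:LE-radical-restriction} contrapositively. The two splits are equivalent, and your check of the hypotheses of Theorem~\ref{thm-main-not-loc-fg} is correct.

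The step that does not hold up as written is the reduction when $\Res(G)$ is compact. The fact you appeal to, an open normal subgroup of $\Res(G)$ whose $G$-conjugates intersect trivially, does not force discreteness. Take $\mathbb{Z}\ltimes\prod_{\mathbb{Z}}F$ with the shift action, $F$ a non-trivial finite group. There $\Res(G)=\prod_{\mathbb{Z}}F$ is compact open, and $\{x : x_0=1\}$ is such a subgroup, yet the group is not discrete.

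Discreteness does in fact follow from the present hypotheses, but only by using the assumptions on $\QZ(G)$ and on normal subgroups in an essential way. Here is one way to see it, with $P=\Res(G)$ assumed infinite:
\begin{enumerate}
\item $P$ must be topologically perfect. Otherwise $\overline{[P,P]}$ is finite, so every element of $P$ has open centralizer and $P\le\QZ(G)$, which is impossible.
\item Take a maximal open normal subgroup of $P$ with non-abelian simple quotient $S$. It has infinitely many $G$-conjugates, since otherwise their intersection would be an open normal subgroup of $G$ properly inside $\Res(G)$.
\item The intersection $M$ of these conjugates is finite, and $P/M\cong S^I$ with $I$ infinite.
\item The preimage of $\bigoplus_I S$ in $P$ is then an infinite subset of $\QZ(G)\cap P$, contradicting discreteness of $\QZ(G)$.
\end{enumerate}

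None of this is needed. If $\Res(G)$ is compact, it is a compact open normal subgroup $K$ of $G$. Then $K\Gamma$ is open and cocompact, hence of finite index in $G$. So for every finitely generated $H\le C$, the group $KH$ is virtually contained in $K\Gamma$. Proposition~\ref{prop-latt-cpct-by-discrete} then gives the conclusion; its trivial-core hypothesis you have already established. This is exactly how the paper handles the compact-by-discrete case, and with this replacement your argument is complete.
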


\begin{proof}
By Lemma \ref{lem-simplicity-assumptions-faith-Cay-Ab} $G$ admits a compact open subgroup with trivial normal core. Hence in the situation where $G$ is compact-by-discrete the conclusion holds thanks to Proposition \ref{prop-latt-cpct-by-discrete}.  Hence for the rest of the proof we assume $G$ is not compact-by-discrete. In particular $\Res(G)$ is not compact. 
	
	If $G$ is not locally finitely generated, the conclusion follows by Theorem \ref{thm-intro-main-not-loc-fg}.  So we may assume $G$ is locally finitely generated. By Remark \ref{rmk-non-cpt-LE},  Proposition~\ref{prop:LE-radical-restriction} ensures that $\RadLE(G_2)$ is compact, where $G_2 := C / \! \! /\Gamma$. Since $G_2$ admits a compact open subgroup with trivial normal core, by Lemma \ref{lem-faith-Cay-Ab-variety} there is a finite group $F$ and a set $I$ such that $\RadLE(G_2)$ is isomorphic to a closed subgroup of $F^I$. Hence it follows from (\ref{item-no-finite-exponent-normal}) that $\RadLE(G_2) \cap \widehat{\Gamma}^C$ is finite (recall that we identify $\widehat{\Gamma}^C$ with the image closure of $\Gamma$ in $G_2$). Therefore $\RadLE(G_2)$ is finite, and hence Theorem \ref{thm-local-contraint-completion} is applicable. The condition on normalizers is indeed satisfied here by Proposition \ref{prop-latt-fi-normalizer}. The conclusion follows.
\end{proof}

\begin{Remark} \label{rmq-normal-lpc-infinite-sufficient}
For $M,N$ such that $M \cap N$ is finite and $\lpc(\widehat{\Gamma}^C/N)$ is finite, $\lpc(M)$ must be finite because $\lpc(M) \subseteq \lpc(\widehat{\Gamma}^C/N)$. Also $\lpc(M)$ is finite whenever $M$ is as in (\ref{item-no-finite-exponent-normal}). Hence the condition \textit{every infinite closed normal subgroup $M$ of $\widehat{\Gamma}^C$ is such that $\lpc(M)$ is infinite} is enough to ensure  (\ref{item-no-commuting-normal}) and (\ref{item-no-finite-exponent-normal}).
\end{Remark}

\section{Groups of automorphisms of trees } \label{sec-trees}

\subsection{Preliminaries}

Let $T$ be a locally finite tree. The group $\Aut(T)$ is equipped with the compact-open topology for the action on $T$. If $G$ is a subgroup of $\Aut(T)$, denote by $G^+$ the subgroup of $G$ generated by fixators of edges. This is a normal subgroup of $G$. Note that if $G$ is equipped with the induced topology from $\Aut(T)$, then $G^+$ is an open subgroup of $G$.
We refer to \cite{Tits-arbres} for the definition of the independence property $(P)$.

\begin{Theorem}[{Tits \cite[Théorème 4.5]{Tits-arbres}}]\label{thm-Tits}
Suppose the action of $G \leq \Aut(T)$ on $T$ is minimal and has no fixed end, and $G$ has Tits' independence property $(P)$. Then every non-trivial  normal subgroup of $G$ contains $G^+$. 
\end{Theorem}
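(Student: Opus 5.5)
This is Tits' simplicity theorem, so the plan is to rebuild Tits' original argument rather than derive it from earlier results in the paper. Let $N$ be a non-trivial normal subgroup of $G$. I would prove the result in three steps.

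\textbf{Step 1: $N$ contains a hyperbolic element.} Since the action of $G$ is minimal and has no fixed end, $G$ fixes no point, end or proper subtree of $T$. As $N$ is normal, any $N$-invariant object of the following kinds would be permuted by $G$: the fixed-point set of $N$, the minimal $N$-invariant subtree, or the set of ends fixed by $N$. Tits' classification of actions on trees says $N$ either contains a hyperbolic element, or fixes a vertex, or stabilizes an edge (inversion), or fixes a unique end. I would rule out the last three cases one by one.
\begin{itemize}
\item If $N$ fixes a vertex or an end, its fixed-point set (or its set of fixed ends) is a non-empty $G$-invariant set. In the end case, a single fixed end contradicts $G$ having no fixed end. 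A fixed-point set that is a proper subtree contradicts minimality.
\item So the fixed-point set of $N$ would have to be all of $T$, which forces $N=1$, a contradiction.
\end{itemize}
Hence $N$ contains a hyperbolic element $h$, with axis $A$. The $G$-translates of $A$ are axes of elements of $N$. Minimality then makes the union of all $N$-axes equal to $T$.

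\textbf{Step 2: $N$ contains the fixator of any half-tree along an axis of $N$.} Fix an edge $e$ and the two half-trees $T_1,T_2$ it determines. The goal is to show that $N$ contains the full fixator $G_e$. I would use $(P)$ to factor elements of $G$ fixing a path as products of elements supported on the complementary branches. Take $g$ in the fixator of a half-tree containing an axis $A$ of some $h\in N$. Property $(P)$, applied along $A$ (or to finite segments of it), decomposes $g$. The key computation is the commutator trick: elements of the form $[h,g]=hgh^{-1}g^{-1}$ lie in $N$ and can be arranged to produce any prescribed element supported on a branch hanging off $A$. I expect this to need a telescoping product $g=\prod_k h^{k} g_k h^{-k}$, with $g_k$ supported on branches at distance about $k\,\ell(h)$. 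The product converges because the supports go to infinity, and closedness of $G$ (or completeness of the fixator) makes it legitimate. From this, $N$ contains the fixator of the half-tree determined by an edge of $A$.

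\textbf{Step 3: generate $G^+$.} By Step 1 every edge lies on some $N$-axis. By $(P)$, the fixator $G_e$ is the product of the fixators of its two half-trees. Step 2 places both fixators in $N$, so $G_e\le N$ for every edge $e$. Therefore $G^+\le N$.

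The main obstacle is Step 2: making the telescoping decomposition precise, and in particular ensuring that the pieces $g_k$ exist using only $(P)$. I would first try it for $g$ supported on a single half-tree disjoint from $A$, then assemble the general case.
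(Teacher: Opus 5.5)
The paper gives no proof here; it simply quotes Tits. So the comparison is with Tits' argument. You do write down its key configuration at the start of Step 2, but Steps 2 and 3 do not fit together.

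\textbf{What Step 2 actually proves.} Your commutator trick shows: if $g\in G$ fixes pointwise a half-tree $T_1$ containing the \emph{whole} axis $A$ of a hyperbolic $h\in N$, then $g\in N$. The reason is as follows. Let $\pi_A$ be the nearest-point projection onto $A$. Then $g$ is supported in a single branch $\pi_A^{-1}(p)$. The conjugates $h^kgh^{-k}$, $k\ge 0$, fix $A$ and are supported in the distinct branches $\pi_A^{-1}(h^kp)$. So property (P) for the infinite chain $A$ provides $f=\prod_{k\ge 0}h^kgh^{-k}\in G_A$. This uses (P), not closedness of $G$, which is not assumed. Then $f=g\cdot hfh^{-1}$, that is, $g=(fhf^{-1})h^{-1}\in N$.

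\textbf{Where it breaks.} In Step 3 you apply Step 2 to the two half-trees of an edge $e$ lying on an axis $A$, using only that every edge lies on some $N$-axis. Neither half-tree of such an $e$ contains $A$. Moreover, an element of $\mathrm{Fix}(T_1)$ generally moves the half-axis $A\cap T_2$, so it is not a product of branch-supported elements of $G_A$. Hence the sentence ``From this, $N$ contains the fixator of the half-tree determined by an edge of $A$'' does not follow. You could instead telescope directly on $g\in\mathrm{Fix}(T_1)$ with $h$ translating into $T_2$. That product does converge in $\Aut(T)$, but the limit is only known to lie in $\overline{G}$, so this route needs $G$ closed.

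\textbf{The missing lemma.} What you need is that \emph{every half-tree contains the entire axis of some hyperbolic element of $N$}. Here is a proof sketch.
Let $T_1$ be bounded by $e$. Take $h\in N$ whose axis $L$ passes through $e$, with attracting end $\omega_+$ in $T_1$ and repelling end $\omega_-$.

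Suppose some hyperbolic $k\in N$ has an axis not ending at $\omega_-$. Then the projection of that axis to $L$ is bounded on the $\omega_-$ side. Hence $h^nkh^{-n}\in N$ has its axis inside $T_1$ for $n$ large.

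Suppose no such $k$ exists. Since $G$ permutes the hyperbolic elements of $N$ by conjugation, every end in $G\omega_-$ is an endpoint of every $N$-axis. So $|G\omega_-|\le 2$, and $G$ either fixes an end or preserves a line. This is excluded unless $T$ is a line, in which case $G^+=1$ and there is nothing to prove.

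With this lemma in place of ``every edge lies on an $N$-axis'', Step 2 applies to both half-trees of every edge. Step 3 then concludes via $G_e=\mathrm{Fix}(T_1)\,\mathrm{Fix}(T_2)$, which is (P) for the chain $e$.

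\textbf{Smaller points.} Step 1 should also exclude the inversion case: the unique edge stabilized by $N$ would then be $G$-invariant, contradicting minimality. Also, the claim that the union of the $N$-axes is $T$ uses the standard fact that this union is a subtree.
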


We will need the following result, which follows from M.\ Hall's theorem that any finitely generated subgroup of a free groups is a free factor of a finite index subgroup. 

\begin{Proposition} \label{prop-commens-free}
	Let $F$ be a finitely generated free group, and $H$ an infinite finitely generated commensurated subgroup.  Then $H$ has finite index in $F$.
\end{Proposition}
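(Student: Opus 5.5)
We derive the statement from M.\ Hall's theorem: every finitely generated subgroup $H$ of a free group $F$ is a free factor of some finite index subgroup $F_0$ of $F$. So we may write $F_0 = H_0 * K$ with $H_0 = H \cap F_0$. Since $H$ is finitely generated, we can in fact arrange $H \leq F_0$, so that $F_0 = H * K$. Suppose for a contradiction that $H$ has infinite index in $F$. Then $K$ is non-trivial: if $K$ were trivial, $H = F_0$ would have finite index. We aim to contradict the fact that $H$ is commensurated in $F_0$, which is inherited from $F$ since $F_0 \leq F$.

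Pick a non-trivial $k \in K$. In the free product $H * K$, I claim that $kHk^{-1} \cap H$ is trivial. An element of $kHk^{-1}$ has the form $k h k^{-1}$ with $h \in H$. If $h \neq 1$, this word is already in normal form for the free product: it alternates $k \in K\setminus\{1\}$, $h \in H \setminus\{1\}$, $k^{-1} \in K \setminus\{1\}$, so it has syllable length $3$. By uniqueness of normal forms it therefore does not lie in $H$. Hence $kHk^{-1} \cap H = \{1\}$.

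Commensuration requires $H \cap kHk^{-1}$ to have finite index in $H$. Since $H$ is infinite, the trivial subgroup does not have finite index in $H$, which is the desired contradiction. Thus $H$ has finite index in $F$.

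The only genuine input is Hall's theorem. The one step needing care is the reduction to $H \leq F_0$ with $H$ itself, rather than merely $H \cap F_0$, as a free factor. This is standard: Hall's theorem, via Stallings foldings completing the core graph of $H$ to a finite cover, directly produces a finite index $F_0 \supseteq H$ with $H$ a free factor. Alternatively, if one only obtains $H_0 = H\cap F_0$ as a free factor, run the same argument with $H_0$, which is infinite, finitely generated and commensurated since it is commensurable with $H$.
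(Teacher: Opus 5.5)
Your proposal is correct and takes the same route as the paper, which simply says the result follows from M.\ Hall's theorem. Your normal-form observation, that for a non-trivial $k$ in the complementary factor $K$ of $F_0 = H * K$ one has $H \cap kHk^{-1} = \{1\}$, is exactly the intended deduction; the detour through $H_0 = H \cap F_0$ is unnecessary, since Hall's theorem already provides $H \leq F_0$ as a free factor.
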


\subsection{The proof of Theorem \ref{thm-intro-main-trees}}

Let $T$ be a locally finite tree. Fix an edge $e$ of $T$, and let $\Aut(T)_e$ be the fixator of $e$ in $\Aut(T)$. Consider the continuous homomorphism $\Aut(T)_e \to \prod_{n \geq 1} \left\lbrace \pm 1\right\rbrace $ which associates to an element of $\Aut(T)_e$ the sequence of signatures of the permutations induced by $g$ on the spheres around $e$. A basic observation from  \cite{Mozes-ICM, Burger-Mozes-Zimmer--linear-rep} is that when $T$ is regular of degree $ \geq 3$, this homomorphism is surjective. Since an infinite profinite group of exponent $2$ is never finitely generated, this implies that the profinite group $\Aut(T)_e$ is not finitely generated. Hence $\Aut(T)$  is not locally finitely generated. When $T$ is no longer regular, $\Aut(T)_e \to \prod_{n \geq 1} \left\lbrace \pm 1\right\rbrace $ need not be surjective, but it is not difficult to check that it has infinite image provided $ \Aut(T)$ is non-discrete. This yields:

\begin{Lemma} \label{lem-tree-not-loc-fg}
	Let $T$ be a locally finite tree such that $ \Aut(T)$ is non-discrete. Then $ \Aut(T)$ is not locally finitely generated. 
\end{Lemma}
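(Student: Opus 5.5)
Since the profinite group $\Aut(T)_e$ is compact open in $\Aut(T)$ and all compact open subgroups are commensurable, it suffices to show that $\Aut(T)_e$ is not topologically finitely generated. The plan is to produce a continuous homomorphism from $\Aut(T)_e$ onto an infinite elementary abelian $2$-group, namely the homomorphism $\sigma: \Aut(T)_e \to \prod_{n \geq 1}\{\pm 1\}$ recording signatures of the permutations induced on the spheres $S_n(e)$ around $e$. Once $\sigma(\Aut(T)_e)$ is known to be infinite, we are done. Indeed, it is a closed (compact) subgroup of $\prod_n \{\pm1\}$, hence an infinite profinite group of exponent $2$. Such a group is an infinite-dimensional $\mathbb{F}_2$-vector space with continuous dual of infinite dimension, so it is not topologically finitely generated, since a $k$-generated one has at most $2^k$ elements. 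And topological finite generation passes to continuous quotients.

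The whole work is thus to show $\sigma$ has infinite image whenever $\Aut(T)$ is non-discrete. I would first note that non-discreteness means every $\Aut(T)_e$-neighbourhood of the identity, in particular each pointwise fixator $F_n$ of the ball $B_n(e)$, is non-trivial. Given $n$, pick $g \in F_n$ non-trivial, and let $m > n$ be minimal such that $g$ acts non-trivially on $S_m(e)$. Then $g$ fixes $S_{m-1}(e)$ pointwise and permutes non-trivially the children of some vertex $v \in S_{m-1}(e)$.

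The main obstacle is that $g$ itself may have even signature on $S_m(e)$, and even on every sphere. To fix this, I would use that $\Aut(T)$ has the independence property on subtrees hanging off vertices. Concretely, if $g$ swaps two children $w, w'$ of $v$, then $T_w$ and $T_{w'}$ are isomorphic rooted subtrees. So there is an element $h \in \Aut(T)$ that swaps $T_w$ and $T_{w'}$ and is the identity elsewhere. This $h$ lies in $F_{m-1}$ and induces a single transposition on $S_m(e)$, so its $m$-th signature coordinate is $-1$. More precisely, on $S_k(e)$ for $k \ge m$ it induces a product of $|S_{k-m}|$-many transpositions, where this count is the size of the corresponding sphere in $T_w$. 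Meanwhile its coordinates $1,\dots,m-1$ are all $+1$.

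Carrying this out for each $n$ yields elements $h_n \in F_{n}$ whose first nontrivial signature coordinate occurs at a level $m_n > n$, with $m_n \to \infty$. The images $\sigma(h_n)$ are then pairwise distinct, because their first coordinates equal to $-1$ occur at distinct levels after passing to a subsequence. Hence $\sigma(\Aut(T)_e)$ is infinite, and by the previous paragraph $\Aut(T)$ is not locally finitely generated.
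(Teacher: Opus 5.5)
Your proposal is correct and follows the paper's route: the signature homomorphism $\Aut(T)_e \to \prod_{n \geq 1}\{\pm 1\}$, combined with the fact that an infinite profinite group of exponent $2$ is not topologically finitely generated, with your branch-swapping element $h$ supplying the infinite-image check that the paper only asserts is ``not difficult to check''. One cosmetic slip: $g$ need not swap $w$ and $w' = g(w)$, but $g|_{T_w}$ already gives the rooted isomorphism $T_w \cong T_{w'}$, and that is all your construction of $h$ uses.
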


\begin{Theorem}[{Bass--Kulkarni for bi-regular trees \cite{Bass-Kulk-90}, Liu \cite{Liu-density}}] \label{thm-dense-tree}
Let $T$ be a locally finite  tree such that  $\Aut(T)$ acts cocompactly on $T$. Then every cocompact lattice of $\Aut(T)$ has dense commensurator. 
\end{Theorem}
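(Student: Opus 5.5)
The plan is to reduce density of $\Comm_G(\Gamma)$, where $G = \Aut(T)$, to a combinatorial statement about automorphisms of finite quotients of $T$.

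\emph{Step 1: reduction to finite covers.} Since $\Gamma$ is a cocompact lattice of $\Aut(T)$, it is finitely generated and virtually free. Passing to a finite index subgroup does not change the commensurator, so we may assume $\Gamma$ acts freely and without inversions on $T$. Then $X = \Gamma \backslash T$ is a finite graph with universal cover $T$. Let $Y \to X$ be a finite covering, corresponding to a finite index subgroup $\Gamma_Y \leq \Gamma$ with $Y = \Gamma_Y \backslash T$. Every automorphism $\varphi$ of $Y$ lifts to an automorphism $\tilde\varphi$ of $T$ normalizing $\Gamma_Y$, so $\tilde\varphi \in \Comm_G(\Gamma)$. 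Density then amounts to the following. Given $g \in G$, a vertex $v$ and $n \geq 1$, we must find a finite cover $Y$ of $X$ and $\varphi \in \Aut(Y)$ whose lift agrees with $g$ on the ball $B(v,n)$. Moreover, lifts may be composed with elements of $\Gamma$, so it suffices to realise the \emph{isomorphism type} of the partial map $g|_{B(v,n)} : B(v,n) \to B(gv,n)$ inside some $Y$.

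\emph{Step 2: making balls embed.} By residual finiteness of $\Gamma$, we first pass to a cover $Y_0$ whose injectivity radius exceeds $2n + \mathrm{diam}(X) + 1$. Then the balls $B(v,n)$ and $B(gv,n)$ project isomorphically into $Y_0$, and the partial map induced by $g$ makes sense there. The task becomes extending this partial isomorphism between two finite subgraphs of a finite graph covered by $T$ to a global automorphism of some further finite cover $Y \to Y_0$.

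\emph{Step 3 (main obstacle): extension.} Here the hypothesis that $\Aut(T)$ acts cocompactly enters, through the finite edge-indexed quotient $A = \Aut(T)\backslash\backslash T$. Every graph covered by $T$ carries a map to $A$ (a covering of edge-indexed graphs), and $g$ respects this labelling. I would attempt a Leighton-type construction, as in the Bass--Kulkarni proof of commensurability of uniform tree lattices. The first step is to build a finite cover $Y$ that is ``locally homogeneous up to radius $n$'': any two vertices with the same $A$-label have $n$-neighbourhoods that are matched by an automorphism of $Y$ inducing any prescribed label-preserving isomorphism of $n$-balls. One natural candidate is the following. Take the finite set $\mathcal B_n$ of $\Aut(T)$-orbits of pointed labelled $n$-balls. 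Build a finite graph whose vertices are pairs (vertex of $Y_0$, choice of identification of its $n$-ball with a model in $\mathcal B_n$), with edges joining compatible pairs. This graph is a finite cover of $Y_0$ on which the finite group of model symmetries acts by permuting identifications. That action should contain an automorphism inducing $g|_{B(v,n)}$. The delicate points are checking that this ``frame bundle'' graph is still covered by $T$, i.e.\ that its local structure matches $A$ (Liu's contribution in the non-bi-regular case), and that compatibility can be arranged consistently along edges. If this direct construction fails, I would fall back to Liu's approach via edge-indexed graph coverings: construct $Y$ as a fibre product of $Y_0$ with a suitable finite cover of $A$ on which the relevant local automorphism acts.
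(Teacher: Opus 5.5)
\textbf{There is a genuine gap: Step 3, which carries the whole content of the theorem, is not carried out, and neither construction you propose can work as described.} Your Steps 1 and 2 are a sound reduction. They give a sufficient condition rather than an equivalence, but that is all you need.

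\emph{The frame-bundle graph is not a covering of $Y_0$.} Fix an identification $\phi$ of $B(y,n)$ with its model and a neighbour $y'$ of $y$. The identifications $\phi'$ of $B(y',n)$ that agree with $\phi$ on $B(y,n)\cap B(y',n)$ form a coset of the pointwise stabiliser of this overlap. That stabiliser is non-trivial in general: in $T_d$ with $d\geq 3$ it can permute the $d-1$ vertices of $B(y',n)\setminus B(y,n)$ attached to a common vertex of the overlap. So vertices of your graph have more neighbours than their images in $Y_0$.

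\emph{More fundamentally, the symmetries you intend to use cannot help.} They act by $(y,\phi)\mapsto(y,\alpha\circ\phi)$, so they commute with the projection $p\colon Y\to Y_0$. The same holds for automorphisms $(\mathrm{id},\psi)$ of a fibre product $Y_0\times_A Z$. An automorphism $\varphi$ with $p\circ\varphi=p$ lifts to a deck transformation of $T\to Y_0$, that is, to an element of $\Gamma_{Y_0}\leq\Gamma$. These act freely on $T$, so they can never agree on $B(v,n)$ with an elliptic $g$ acting non-trivially on that ball. Automorphisms covering some automorphism of $Y_0$ fare no better: their lifts lie in $N_G(\Gamma_{Y_0})$, and you would be asking $Y_0$ itself to realise $g|_{B(v,n)}$.

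\emph{What is actually needed.} You need a finite cover $Y$ carrying an automorphism $\varphi$ that does not descend to $Y_0$ at all. Then $Y$ is a common finite cover of $Y_0$ for the two covering maps $p$ and $p\circ\varphi$, with the lifts of $B(v,n)$ and $B(gv,n)$ matched as prescribed. This is a Leighton-type theorem with local control, and it is exactly what Bass--Kulkarni and Liu establish. The paper does not reprove it; it cites them.

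\emph{A route for $T=T_d$.} Here you could close the gap directly.
\begin{enumerate}
\item By Leighton's theorem, some conjugate of $\Gamma$ is commensurable with the group $W$ of colour-preserving automorphisms. Density of the commensurator is invariant under conjugation and commensurability, so you may work with $W$.
\item Every finite connected loopless properly $d$-edge-coloured $d$-regular graph is $\Lambda\backslash T_d$ for a finite index subgroup $\Lambda\leq W$.
\item After composing $g$ with an element of $W$, you may assume $gv=v$.
\item Take two copies of the coloured ball $B(v,n)$ and join each leaf to its twin by $d-1$ parallel edges of the missing colours. This gives such a graph, and $g|_{B(v,n)}$ extends to an automorphism of it, since colours need not be preserved.
\item The lift of this automorphism fixing $v$ lies in $N_G(\Lambda)\subseteq\Comm_G(W)$ and agrees with $g$ on $B(v,n)$.
\end{enumerate}
For general $T$ this doubling is unavailable. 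Joining twin leaves requires the missing edges to lie over loops of $A$, which fails already for bi-regular trees with distinct degrees. Handling the edge-indexed structure is precisely the extra work in Bass--Kulkarni (bi-regular case) and Liu (general case).
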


\begin{Theorem} \label{thm-main-trees}
	Let $T$ be a locally finite  tree such that  $\Aut(T)$ acts cocompactly on $T$. Let $\Gamma$ be a cocompact lattice of $G = \Aut(T)$. Then, up to commensurability, $\Gamma$ is the only infinite finitely generated commensurated subgroup of $\Comm_G(\Gamma)$.
\end{Theorem}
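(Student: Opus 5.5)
Let $C = \Comm_G(\Gamma)$ and $H \leq C$ infinite, finitely generated and commensurated. By Theorem~\ref{thm-dense-tree}, $C$ is dense in $G$. The plan has two parts. First show that $H$ is virtually contained in $\Gamma$. Then use Proposition~\ref{prop-commens-free} to upgrade this to commensurability. The first part cannot come directly from Theorem~\ref{thm-main-not-loc-fg} applied to $G$, because $G = \Aut(T)$ may have compact normal subgroups; for instance, finite ones, or the kernel of the action on a ``collapsed'' tree when $T$ has vertices of degree $\leq 2$. Such subgroups are not open unless they are finite. So I would first reduce to a nicer group.

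The reduction goes as follows. If $T$ is finite, or $\Aut(T)$ is compact, or $\Gamma$ is finite, everything is trivial. So assume $T$ is infinite and $\Gamma$ is infinite, hence virtually free. If $\Gamma$ is virtually $\Z$ (two-ended tree), then $G$ is compact-by-discrete. In that case $G$ has a compact open normal subgroup $K$ with $K\Gamma$ of finite index, and Proposition~\ref{prop-latt-cpct-by-discrete} applies directly, using that $G$ acts faithfully so some vertex stabilizer has trivial core.

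In the main case, $T$ has infinitely many ends, and $G$ acts minimally on its minimal subtree after pruning. Replace $T$ by the tree $T'$ obtained by forgetting degree-2 vertices. There $\Aut(T)$ acts on $T'$ with a compact kernel $K_0$ (permutations of parallel branches), and the image is closed in $\Aut(T')$ with property $(P)$. The key input is Tits' Theorem~\ref{thm-Tits} for $\Aut(T)$ itself, which has property $(P)$ and acts without fixed end. Every non-trivial normal subgroup of $G$ then contains $G^+$, which is open. This gives hypothesis~(\ref{item-alternative-normal-subgroups}) of Theorem~\ref{thm-main-not-loc-fg}, with $\Res(G) \leq G^+$ and the trivial compact subgroup handling $N = 1$. (If the minimal subtree is proper, first pass to it: the invariant minimal subtree is preserved, and the restriction has compact kernel. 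This is handled by Proposition~\ref{prop-latt-cpct-by-discrete} in the same way, replacing $G$ by its image and $H, \Gamma$ by their images.)

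Hypothesis~(\ref{item-faith-CayAb}) holds since edge stabilizers have trivial core for a faithful action on a tree with infinitely many ends. For hypothesis~(\ref{item-not-loc-fg}), I would adapt Lemma~\ref{lem-tree-not-loc-fg}. If $J \geq \Res(G)$ is closed, then $J$ contains the open subgroup $\Res(G)$ of finite index in $G$. The signature homomorphism restricted to $J_e$ still has infinite image: $J_e$ has finite index in $G_e$, and the infinite elementary abelian $2$-group image has finite-index image from $J_e$. So $J$ is not locally finitely generated. Theorem~\ref{thm-main-not-loc-fg} then yields that $H$ is virtually contained in $\Gamma$. This is the step where I expect the main obstacle: verifying the normal subgroup alternative and non-local-finite-generation carefully when $T$ is only cocompact (non-regular, possibly with degree-2 vertices or a proper minimal subtree). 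I would handle it by reducing via the compact kernel $K_0$ and Proposition~\ref{prop-latt-cpct-by-discrete}.

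Finally, passing to finite-index subgroups, let $F \leq \Gamma$ be a free subgroup of finite index, and let $H' = H \cap F$. Then $H'$ is of finite index in $H$, hence finitely generated and infinite. It is also commensurated in $F$, since $F \leq C$ commensurates $H$. By Proposition~\ref{prop-commens-free}, $H'$ has finite index in $F$. Therefore $H$ and $\Gamma$ are commensurable.
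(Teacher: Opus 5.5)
Your overall route matches the paper's. You pass to the minimal subtree and absorb the compact kernel with Proposition~\ref{prop-latt-cpct-by-discrete}, obtain the hypotheses of Theorem~\ref{thm-main-not-loc-fg} from Theorem~\ref{thm-Tits}, density of the commensurator and the signature homomorphism, and finish with Proposition~\ref{prop-commens-free}.

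The one genuine gap is your case split. You use compact-by-discreteness only when $T$ is two-ended. In the infinitely-ended case you go straight to Theorem~\ref{thm-main-not-loc-fg} for the image $\overline{G}$ of $G$ in $\Aut(T_{min})$. But a cocompact $\Aut(T)$ with infinitely many ends can be compact-by-discrete, or even discrete. For instance, subdivide the edges of the Cayley tree of $F_2$ and hang rigid finite trees that record the label and orientation of each edge; then $\Aut(T)=F_2$.

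In that situation hypothesis~(3) of Theorem~\ref{thm-main-not-loc-fg} fails for $\overline{G}$: here $\Res(\overline{G})=1$, and $J=\{1\}$ is locally finitely generated. Correspondingly your signature argument breaks. The group $\overline{G}_e$ is finite, so its image in $\prod_n\{\pm 1\}$ is finite; Lemma~\ref{lem-tree-not-loc-fg} genuinely needs non-discreteness.

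The repair is the paper's split: ask whether $G$ has a compact open normal subgroup.
\begin{itemize}
\item If it does, Proposition~\ref{prop-latt-cpct-by-discrete} applies exactly as in your two-ended paragraph.
\item If it does not, then $\overline{G}=G/K_0$ is non-discrete, since otherwise $K_0$ would be open. Hence $\Res(\overline{G})\neq 1$, by Theorem~\ref{thm-res-disc-res-triv} and the trivial core. By Tits' theorem $\Res(\overline{G})$ then contains the open subgroup $\overline{G}^+$, and your argument for hypothesis~(3) goes through. That argument needs only that $\Res(\overline{G})$ is open, not that it has finite index, which you asserted without justification.
\end{itemize}

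Some smaller points.
\begin{itemize}
\item Your stated reason for not applying Theorem~\ref{thm-main-not-loc-fg} to $G$ itself is off. Hypothesis~(2) explicitly allows compact normal subgroups: take $K=N$. The real reason to pass to $T_{min}$ is that Tits' theorem requires a minimal action.
\item Having passed to $T_{min}$, you should note that $\overline{G}$, which need not be all of $\Aut(T_{min})$, still has property (P). To see this, lift an element fixing a path and apply (P) in $T$.
\item Drop the detour through erasing degree-$2$ vertices. That operation induces an injective map on automorphism groups. The compact kernel you describe is really the kernel of restriction to $T_{min}$, and degree-$2$ vertices cause no trouble for Tits' theorem.
\item When $T$ is not minimal, a single vertex stabilizer need not have trivial normal core. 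Intersect the stabilizers of one vertex from each of the finitely many $G$-orbits instead.
\end{itemize}
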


\begin{proof}
	Observe that since $G$ acts cocompactly on $T$, $G$ admits a compact open subgroup with trivial normal core. If $G$ admits a compact open normal subgroup then the statement follows from  Proposition \ref{prop-latt-cpct-by-discrete}. Hence we may assume $G$ is not of this form. In particular, $T$ has infinitely many ends. Since $\Gamma$ is a discrete subgroup of $G$ acting cocompactly on $T$,  $\Gamma$ does not fix any end of $T$, so neither does $G$. Let $H$ be an infinite finitely generated commensurated subgroup of $\Comm_G(\Gamma)$. We wish to show first that $H$ is virtually contained in $\Gamma$.
	
	Assume for a moment that the $G$-action on $T$ is minimal. Since $G = \Aut(T)$ satisfies Tits' independence property $(P)$, every non-trivial normal subgroup of $G$ contains $G^+$ by Theorem \ref{thm-Tits}. In particular every non-trivial normal subgroup of $G$ is open. By Lemma \ref{lem-tree-not-loc-fg} $G$ is not locally finitely generated, and by Theorem \ref{thm-dense-tree} the subgroup $\Comm_G(\Gamma)$ is dense in $G$. Hence all the assumptions of Theorem \ref{thm-intro-main-not-loc-fg} are satisfied, and we deduce $H$ is virtually contained in $\Gamma$.

In general the $G$-action on $T$ need not be minimal, but $G$ admits a unique minimal invariant subtree $T_{min}$. Let $K$ be the kernel of the action of $G$ on $T_{min}$ and $\overline{G}$  the image  of $G$ in $\Aut(T_{min})$.  The subgroup $\overline{G}$ is not necessarily the entire $\Aut(T_{min})$, but it keeps all the properties that we used about $G$ in the previous paragraph. Hence we deduce $KH$ is virtually contained in $K \Gamma$. Since $K$ is compact, Proposition \ref{prop-latt-cpct-by-discrete} ensures $H$ is virtually contained in $\Gamma$.
 
To conclude, observe that since $\Gamma$ acts properly and cocompactly on a tree, upon passing to a finite index subgroup we can assume $\Gamma$ is a free group. By the previous paragraph $H \cap \Gamma$ has finite index in $H$. Hence $H \cap \Gamma$ is finitely generated and infinite. On the other hand  $H \cap \Gamma$ is a commensurated subgroup of $\Gamma$. Proposition \ref{prop-commens-free} then asserts $H \cap \Gamma$ has finite index in $\Gamma$.
\end{proof}

\begin{Remark}
In the setting of Theorem \ref{thm-main-trees}, cocompact lattices exist in $\Aut(T)$ if and only if $\Aut(T)$ is a unimodular group, if and only if the tree $T$ covers a finite graph \cite[Corollary 4.10]{Bass-Kulk-90}.
\end{Remark}

\subsection{Groups of automorphisms of trees with prescribed local action} \label{subsec:U(F)}

We now turn to an application of Theorem~\ref{thm-combined-arguments} to closed cocompact subgroups of $\Aut(T)$.

\begin{Corollary}\label{cor-combined-arguments-tree}
Let $T$ be a locally finite tree, $G$ a closed subgroup of $\Aut(T)$ that acts cocompactly on $T$ and $\Gamma$ a cocompact lattice of $G$.  Suppose the following conditions hold:
\begin{enumerate}[(A)]
\item\label{item-dense} $\Comm_G(\Gamma)$ is dense in $G$;
\item\label{item-normal} $\QZ(G)$ is discrete, $\Res(G)$ is open, and every abstract normal subgroup of $G$ is contained in $\QZ(G)$ or contains $\Res(G)$;
\item\label{item-CSP} $\Gamma$ has the CSP in $\Comm_G(\Gamma)$.
\end{enumerate}
Then, up to commensurability, $\Gamma$ is the only infinite finitely generated commensurated subgroup of $\Comm_G(\Gamma)$.
\end{Corollary}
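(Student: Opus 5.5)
The plan is to apply Theorem~\ref{thm-combined-arguments} with $C = \Comm_G(\Gamma)$, which is dense in $G$ by (\ref{item-dense}). This gives that every finitely generated commensurated subgroup of $C$ is virtually contained in $\Gamma$. We then finish exactly as in the proof of Theorem~\ref{thm-main-trees}. Hypothesis (\ref{item-normal}) is precisely the assumption on normal subgroups required by Theorem~\ref{thm-combined-arguments}. So what remains is to verify the three local conditions on $\widehat{\Gamma}^C$.

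\textbf{Reducing to the profinite completion.} By (\ref{item-CSP}) the natural map $\widehat{\Gamma} \to \widehat{\Gamma}^C$ is an isomorphism, so it suffices to check the conditions for the profinite completion $\widehat{\Gamma}$.

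\textbf{A preliminary case.} If $G$ is compact-by-discrete, the conclusion of Theorem~\ref{thm-combined-arguments} already follows from Proposition~\ref{prop-latt-cpct-by-discrete}. This uses the fact that $G$ has a compact open subgroup with trivial normal core, which holds because $G$ acts cocompactly on $T$. Otherwise $G$ is non-discrete and acts cocompactly on $T$, so $T$ has infinitely many ends. The lattice $\Gamma$ acts properly and cocompactly on $T$, hence it is virtually a non-abelian free group $F$ of finite rank. Consequently $\widehat{\Gamma}$ contains the free profinite group $\widehat{F}$ of rank $\geq 2$ as an open subgroup.

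\textbf{Verifying the local conditions.} By Remark~\ref{rmq-normal-lpc-infinite-sufficient}, it suffices to check two things:
\begin{enumerate}
\item $\lpc(\widehat{\Gamma})$ is infinite;
\item every infinite closed normal subgroup $M$ of $\widehat{\Gamma}$ has $\lpc(M)$ infinite.
\end{enumerate}
Local prime content is unchanged on passing to open subgroups. Also, $M \cap \widehat{F}$ is an infinite closed normal subgroup of $\widehat{F}$. So we may work entirely in $\widehat{F}$.

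The first point is clear: $\widehat{F}$ surjects onto $\Z_p$ for every prime $p$, so $\widehat{F}$ contains an infinite pro-$p$ subgroup for each $p$.

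The second point is the main obstacle. For this I would invoke the known structure theory of normal subgroups of free profinite groups: a nontrivial closed normal subgroup $M$ of a free profinite group of rank $\geq 2$ has order divisible by $p^\infty$ for every prime $p$. One way to argue is as follows. The pro-$p$ completion of $M$ is nontrivial for each $p$, because $\widehat{F}/M$ acts on $M$ and $\widehat{F}$ is projective. Alternatively, $M$ contains nontrivial elements of infinite order generating copies of $\widehat{\Z}$. For the latter route, note that $\widehat{F}$ is torsion-free and every procyclic subgroup of a free profinite group is isomorphic to a product $\prod_{p \in \pi} \Z_p$. The remaining task is to show that $M$ contains procyclic subgroups involving infinitely many primes. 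I would try to deduce this from the fact that a nontrivial normal subgroup of $\widehat{F}$ cannot be contained in a pro-$\pi$ group for a finite set of primes $\pi$, using that $\widehat{F}/M$ would then have to be "almost free". If this fails, I would instead cite the literature on profinite completions of virtually free (or hyperbolic, virtually special) groups, as alluded to in Proposition~\ref{prop-completion-hyp-special}.

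\textbf{Concluding.} Once Theorem~\ref{thm-combined-arguments} applies, any infinite finitely generated commensurated subgroup $H$ of $C$ is virtually contained in $\Gamma$. Passing to a free finite index subgroup of $\Gamma$, the subgroup $H \cap \Gamma$ is infinite, finitely generated, and commensurated in this free group. Proposition~\ref{prop-commens-free} then shows that $H \cap \Gamma$ has finite index in $\Gamma$, so $H$ is commensurable with $\Gamma$.
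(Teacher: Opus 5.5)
Your proposal is correct and follows the paper's proof: apply Theorem~\ref{thm-combined-arguments} with $C=\Comm_G(\Gamma)$, use (\ref{item-CSP}) to identify $\widehat{\Gamma}^C$ with $\widehat{\Gamma}$, use virtual freeness of $\widehat{\Gamma}$ for the local conditions, and finish with Proposition~\ref{prop-commens-free}. Your preliminary treatment of the compact-by-discrete case is in fact slightly more careful than the paper, which only excludes bounded $T$, although condition~(\ref{item-no-commuting-normal}) fails for two-ended $T$, where $\widehat{\Gamma}$ is virtually $\widehat{\Z}$.

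The normal-subgroup fact you flag as the main obstacle is also just asserted in the paper, so citing it is fine, but your heuristic justifications are not arguments as written. A short proof runs as follows. Suppose a nontrivial closed normal subgroup $M\trianglelefteq\widehat{F}$ had order prime to $q$, and let $V\trianglelefteq\widehat{F}$ be open with $M\not\le V$. In $\widehat{F}/\overline{[V,V]V^q}$ the images of $M$ and $V$ are normal subgroups of coprime orders, so they commute, and hence $MV/V\neq 1$ acts trivially on $V/\overline{[V,V]V^q}$. This contradicts the fact that this module, the mod-$q$ first homology of a finite regular cover of a rose, is a faithful $\widehat{F}/V$-module when the rank is at least $2$. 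Finally, torsion-freeness of $M$ upgrades ``$q$ divides $|M|$'' to $q\in\lpc(M)$.
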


\begin{proof}
	The case where $T$ is bounded is trivial, so we assume $T$ unbounded.  Since  $\Gamma$ acts properly and cocompactly on a tree, $\Gamma$ has a finite index subgroup that is a non-trivial free group. Hence the profinite completion $\widehat{\Gamma}$ has a finite index subgroup that is a non-trivial  free profinite group. Therefore $\widehat{\Gamma}$ verifies the conditions (\ref{item-many-primes}), (\ref{item-no-commuting-normal}), (\ref{item-no-finite-exponent-normal}) from Theorem~\ref{thm-combined-arguments}. As in the previous subsection, since $\Gamma$ is virtually a free group, combining the conclusion of Theorem~\ref{thm-combined-arguments} with  Proposition \ref{prop-commens-free} one obtains the conclusion.
\end{proof}

In the rest of this subsection we prove Theorem~\ref{thm-intro-U(F)} from the introduction using Corollary \ref{cor-combined-arguments-tree}. Let $d \geq 3$, and we denote by $T_d$ the $d$-regular tree. Given a vertex $v$, write $\mathrm{St}(v)$ for the set of edges of $T$ around $v$. In all this subsection we fix a coloring $c$ of the edges of $T_d$ using colors $\{1,2,\dots,d\}$ that is locally bijective, meaning that for every $v$ all edges of $\mathrm{St}(v)$ have different colors. For $g \in \Aut(T_d)$ and $v \in V(T_d)$, the {local action} of $g$ at $v$ is the unique permutation $\sigma(g,v) \in \mathrm{Sym}(d)$ such that $c(ge) = \sigma(g,v)(c(e))$ for all $ e \in \mathrm{St}(v)$.  

Given a permutation group $F \le \mathrm{Sym}(d)$, the group $U(F)$ consists of automorphisms of $T_d$ all of whose local actions belong to $F$ \cite{BM00a}: \[ U(F):= \{g \in \Aut(T_d) \mid \forall v \in V(T_d) : \sigma(g,v) \in F\}.\] It is a closed subgroup of $\Aut(T_d)$ acting transitively on vertices. The group $U(F)$ is non-discrete if and only if the permutation group $F$ does not act feely on $\{1,2,\dots,d\}$.

 The result that $\Aut(T_d)$ admits only one commensurability class of cocompact lattices up to conjugation \cite{Leighton-finite-cover, Bass-Kulk-90} also holds true in the group $U(F)$: 

\begin{Proposition}\label{prop-U(F)-coc-conjugate}
	Let $\Gamma_1, \Gamma_2$ be cocompact lattices in $U(F)$.  Then there is $g \in U(F)$ such that $g\Gamma_1g^{-1}$ and $\Gamma_2$ are commensurable.
\end{Proposition}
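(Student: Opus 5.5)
The plan is to follow the covering-theory proof of Leighton and Bass--Kulkarni, keeping track of the colouring $c$ so that every isomorphism we build has local actions in $F$. A cocompact lattice $\Gamma$ of $U(F)$ acts properly and cocompactly on $T_d$, so some finite index subgroup $\Gamma' \le \Gamma$ acts freely on $T_d$ without edge inversions. Then $T_d \to X := \Gamma' \backslash T_d$ is a covering of a finite $d$-regular graph. Each element $\gamma \in \Gamma'$ lies in $U(F)$, so the colouring $c$ descends to $X$ only up to $F$. More precisely, at each vertex $x$ of $X$ we get a bijection from the edges at $x$ to $\{1,\dots,d\}$, well defined up to post-composition by an element of $F$. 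Along each edge of $X$ the two endpoint labellings differ by a well-defined element of $F$. This is an ``$F$-structure'' on $X$, in the style of a voltage assignment.

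Given two such lattices $\Gamma_1', \Gamma_2'$ with quotients $X_1, X_2$ carrying $F$-structures, we want to find a finite graph $Y$ that covers both $X_1$ and $X_2$ and that carries an $F$-structure lifting both. We would follow Leighton's construction, using Bass--Kulkarni's version with ``colourings''. First pass to finite covers of $X_1$ and $X_2$ with the same number of vertices. Then build the common cover as a finite graph whose vertices are pairs $(x_1,x_2)$, equipped with chosen bijections between the local edge sets that are compatible with $F$. The key point is that the local data at every vertex is the same: $d$ edges labelled modulo $F$. So one can match edges of colour $i$ at $x_1$ with edges of colour $i$ at $x_2$, and use a perfect-matching (Hall's theorem / regular bipartite graph) argument to assemble these local matchings into a finite graph $Y$ with covering maps $Y \to X_1$ and $Y \to X_2$ that respect labels modulo $F$.

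Once $Y$ is built, lift the universal covers. We obtain $\widetilde Y \cong T_d$ together with two label-respecting isomorphisms $\phi_j : \widetilde Y \to T_d$, for $j=1,2$, intertwining $\pi_1(Y)$ with finite index subgroups of $\Gamma_j'$. Set $g = \phi_2 \phi_1^{-1}$. Because both $\phi_j$ preserve labels up to $F$, so does $g$, and hence $g \in U(F)$. By construction $g\Gamma_1 g^{-1}$ and $\Gamma_2$ share the finite index subgroup $\phi_2\pi_1(Y)\phi_2^{-1}$, so they are commensurable.

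The main obstacle is the second step: carrying out Leighton's common-cover argument while keeping the edge labels modulo $F$ compatible, including the orientation and inversion issues at the two ends of an edge. Our first attempt will be to reduce to the case $F = 1$. Lift $X_j$ to the finite cover $\hat X_j$ on which the $F$-valued voltages become trivial; this is the cover associated to the kernel of the holonomy map $\pi_1(X_j) \to F$. Then $\hat X_j$ is honestly edge-coloured with $d$ colours, and its deck group corresponds to a subgroup of $\Gamma_j' \cap U(\{1\})$, up to conjugation in $U(F)$. Two finite graphs with proper $d$-edge-colourings have a common colour-preserving finite cover: take the fibre product over the bouquet with $d$ colours, using involutive edges, and pick a connected component. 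This gives $Y$ directly, with colour-preserving covering maps to both $\hat X_1$ and $\hat X_2$.

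What remains to check is that the trivialising cover exists. That amounts to saying that a conjugate by an element of $U(F)$ of a finite index subgroup of $\Gamma_j$ lies in $U(\{1\})$. We would prove this by choosing, vertex by vertex in a fundamental domain, elements of $F$ to recolour, and then propagating the choice equivariantly. This is possible because the holonomy takes values in $F$.
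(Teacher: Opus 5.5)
Your argument is correct, but it takes a different route from the paper. The paper gives no argument of its own. It points either to a mild adaptation of the referee's proof of Leighton's theorem in Bass's covering-theory article, or to the more general Leighton-type theorem of Shepherd and Gardam--Woodhouse. That is essentially the ``common cover by matchings'' strategy of your second paragraph, which you end up not needing.

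What you actually carry out is specific to $U(F)$. You show that each cocompact lattice has a finite index subgroup $\Gamma''$ fixing a legal colouring $c'$ with $c'|_{\mathrm{St}(v)}\circ (c|_{\mathrm{St}(v)})^{-1}\in F$ for every vertex $v$. The automorphism $h$ with $c\circ h=c'$ then lies in $U(F)$, and $h\Gamma''h^{-1}\le W=U(\{1\})$.

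One point needs repair. The element of $F$ attached to an edge is not well defined. Write $\ell_x$ for the chosen labelling at $x$. With $\ell_x,\ell_y$ fixed, the $\phi$ satisfying $\ell_x(e)=\phi(\ell_y(e))$ is determined only modulo the stabiliser of $\ell_y(e)$ in $F$, which is non-trivial exactly when $U(F)$ is non-discrete. The full local action read off from a lift is well defined, but it need not satisfy $\phi_{\bar e}=\phi_e^{-1}$.

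The fix is simply to choose one such $\phi_e$ per edge and set $\phi_{\bar e}=\phi_e^{-1}$. On the kernel cover $\hat X$, trivial holonomy then gives $\tau$ with $\tau_{\hat y}=\tau_{\hat x}\phi_{\hat e}$, and the recolouring $\ell'_{\hat x}=\tau_{\hat x}\ell_{\hat x}$ is edge-consistent. Note the division of labour: trivial holonomy is what makes the recolouring consistent, while all the data taking values in $F$ is what keeps $h$ in $U(F)$.

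Once both lattices are virtually $U(F)$-conjugate into $W$, the fibre product is superfluous. $W$ acts simply transitively on vertices, so its finite index subgroups are pairwise commensurable, and $g=h_2^{-1}h_1$ works.

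What each approach buys: your route is elementary and self-contained. It also yields exactly the normal form (commensurable with a $U(F)$-conjugate of $W$) that the paper uses in the proof of Theorem \ref{thm-CSP-U(F)}. The cited route is less explicit, but it is not tied to the colouring structure of $U(F)$. It applies, for instance, to the groups with property $(P_k)$ discussed at the end of Section \ref{sec-trees}.
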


\begin{proof}
	The statement can be deduced from a mild adaptation of the proof of Theorem 5.2 from Bass' original article \cite[5.4]{BassCovering} (see Referee's proof of Theorem 5.2). Alternatively, the statement also follows from a more general result proven by Shepherd and Gardam--Woodhouse  \cite[Theorem B-A.1]{Shepherd-GGD-22}.
\end{proof}

In particular any two cocompact lattices of $U(F)$ have their commensurators that are conjugate in $U(F)$. In the sequel it will be convenient to work with a specific one, namely the subgroup $W$ consisting of automorphisms $g$ of $T_d$ such that $\sigma(g,v) = 1$ for every vertex $v$. The group $W$ acts freely transitively on vertices of $T_d$. We denote by $C_d$ the commensurator of $W$ in $\Aut(T_d)$, and by $C_{d,F} = C_d \cap U(F)$ the commensurator of $W$ in $U(F)$. 

\begin{Proposition} \label{prop-dense-comm-U(F)}
	Any cocompact lattice of $U(F)$ has a dense commensurator in $U(F)$.
\end{Proposition}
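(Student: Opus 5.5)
By Proposition \ref{prop-U(F)-coc-conjugate}, any cocompact lattice of $U(F)$ is commensurable with a $U(F)$-conjugate of $W$. Commensurable subgroups have the same commensurator, and conjugation by $g \in U(F)$ is a homeomorphism of $U(F)$ carrying $\Comm_{U(F)}(W)$ onto $\Comm_{U(F)}(gWg^{-1})$. It therefore suffices to show that $C_{d,F} = C_d \cap U(F)$ is dense in $U(F)$. Since $W$ acts transitively on vertices and $W \leq C_{d,F}$, it is enough to approximate elements of a vertex stabilizer $U(F)_v$. Concretely, the plan is: given $g \in U(F)_v$ and $n \geq 1$, produce $h \in C_{d,F}$ fixing $v$ that agrees with $g$ on the ball $B(v,n)$.

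\textbf{Step 1: a criterion for membership in $C_d$.} An automorphism $h$ commensurates $W$ if its local action $v \mapsto \sigma(h,v)$ is \enquote{eventually $W$-periodic}, i.e.\ there is a finite index subgroup $W' \leq W$ with $\sigma(h,wu) = \sigma(h,u)$ for all $w \in W'$ and all vertices $u$. For such $h$ one checks that $hW'h^{-1}$ is again a group of colour-preserving-up-to-a-fixed-pattern automorphisms, and that it intersects $W$ in a finite index subgroup. This is the classical description (in the spirit of Bass--Kulkarni and Liu) showing that the commensurator of $W$ contains all automorphisms with finitely many \enquote{local action types} modulo a finite index subgroup of $W$. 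If this criterion is cumbersome to verify directly, I would instead use Theorem \ref{thm-dense-tree}-type arguments: consider finite quotient graphs $W'\backslash T_d$.

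\textbf{Step 2: construction of $h$.} Take a finite index normal subgroup $W' \leq W$ such that the quotient $X = W'\backslash T_d$ is a finite $d$-regular edge-coloured graph whose injectivity radius at the image of $v$ exceeds $n$. Such subgroups exist because $W \cong (\Z/2)^{*d}$ is residually finite, and a ball of radius $n+1$ around $v$ embeds in a finite quotient. The restriction of $g$ to $B(v,n)$ defines an automorphism of the embedded ball, all of whose local actions lie in $F$. I then need to extend this to an automorphism $\bar h$ of a finite cover $X'$ of $X$ all of whose local actions are in $F$, and lift $\bar h$ to $h \in \Aut(T_d)$. The lift normalizes $\pi_1(X')$, which is commensurable with $W$, so $h \in C_d$. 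Moreover $h \in U(F)$ because local actions are read off in $X'$.

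\textbf{Main obstacle.} The hard step is the extension to a finite cover in Step 2 while keeping all local actions in $F$. For $\Aut(T_d)$ this is Liu's or Bass--Kulkarni's argument. For $U(F)$, I would try the following. First extend $g|_{B(v,n)}$ to an element of $U(F)$ that is \enquote{locally trivial outside $B(v,n+1)$ up to $F$-labels}. Concretely, define $h$ on $T_d$ by choosing at each vertex at distance greater than $n$ from $v$ a local action in $F$ that depends only on the colour of the edge leading back towards $v$ and on the local action at the parent; such an extension exists by the standard extension property of $U(F)$. Among these choices, take one that is periodic along a finite index subgroup: using a finite quotient, make the choice a function of the vertex's image in $X$ away from the ball. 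Verifying that this yields a consistent automorphism that commensurates $W$ is where the real work lies. If this verification fails, the fallback is to invoke the general density result of \cite{Shepherd-GGD-22} or \cite{BassCovering} for the commensurator in $U(F)$, combined with Proposition \ref{prop-U(F)-coc-conjugate}.
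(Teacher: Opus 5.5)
\textbf{There is a genuine gap: the construction of $h$ is never carried out.} Your reduction to $W$ matches the paper, and so does your membership criterion for $C_d$: if $\sigma(h,\cdot)$ is invariant under a finite-index $W'\le W$, then $hW'h^{-1}\le W$. The reduction uses Proposition~\ref{prop-U(F)-coc-conjugate} together with the vertex-transitivity of $W\le C_{d,F}$. The paper then only says that the density argument of Lubotzky--Mozes--Zimmer \cite[Proposition~2.6]{LMZ-superrigidity} applies verbatim in $U(F)$.

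The only step with real content is producing $h\in C_{d,F}$ fixing $v$ and agreeing with $g$ on $B(v,n)$. You say yourself that this is where the work lies, and the routes you sketch do not work as stated:
\begin{enumerate}
\item Choosing $\sigma(h,u)$ outside the ball from data at the parent is a construction rooted at $v$. The obvious such choice, copying the parent's local action, makes $\sigma(h,\cdot)$ constant on each component of $T_d\setminus B(v,n)$. Such a function is invariant under no finite-index $W'$ unless $\sigma(g,\cdot)$ is constant on $B(v,n)$, since a hyperbolic $w\in W'$ moves $B(v,n)$ into a single component.
\item Passing to an injectivity-radius quotient $X=W'\backslash T_d$ does not help. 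Vertices of $X$ outside the image of the ball have no ``parent''. The boundary edges impose constraints $\sigma_y(i_\ell)=\sigma(g,p_\ell)(i_\ell)$, which must be propagated compatibly over all of the rest of $X$ with values in $F$. You give no reason this is possible on that $X$; a constant extension, for example, fails as soon as two constraints disagree.
\item The extension to an automorphism of ``a finite cover $X'$ of $X$'' is only asserted.
\item The fallback citation is too vague to count. What is needed is that the conjugating element in \cite[Theorem B-A.1]{Shepherd-GGD-22} can be taken in any identity neighbourhood; the paper records this in a later remark. That gives density: if $u$ is close to $1$ and $ux\Gamma x^{-1}u^{-1}$ is commensurable with $\Gamma$, then $ux\in\Comm_{U(F)}(\Gamma)$ is close to $x$. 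You do not formulate this.
\end{enumerate}

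\textbf{The missing idea is to build the finite graph from the ball itself} and then apply Proposition~\ref{prop-description-LMZ}(\ref{item-defines-element-commens}).
\begin{itemize}
\item Let $Y$ be the double of the coloured ball $B(v,n+1)$. Take two copies $B^0,B^1$, and for each leaf $\ell\in S(v,n+1)$ join $\ell^0$ to $\ell^1$ by one edge of each of the $d-1$ colours missing at $\ell$.
\item This $Y$ is a finite connected $d$-regular graph without loops, with a locally bijective colouring. The colour-preserving map $\pi\colon T_d\to Y$ with $\pi(v)=v^0$ identifies $B(v,n+1)$ with $B^0$.
\item Put $\sigma_{u^0}=\sigma_{u^1}=\sigma(g,u)$ for every $u\in B(v,n+1)$. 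Compatibility holds on tree edges because $g$ is an automorphism. It holds on the doubling edges because both endpoints carry the same permutation. Every $\sigma_y$ lies in $F$.
\item The resulting $h$, normalised by $h(v)=v$, lies in $C_d\cap U(F)=C_{d,F}$. It has the same local actions as $g$ on $B(v,n+1)$, so it agrees with $g$ on $B(v,n+2)$.
\end{itemize}
In your own framing, this amounts to letting $g|_{B(v,n+1)}$ act on both copies. That gives an automorphism of $Y$ with local actions in $F$, and its lift normalises the deck group of $\pi$, a finite-index subgroup of $W$. Note that $Y$ need not cover any injectivity-radius quotient.
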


\begin{proof}
	Lubotzky--Mozes--Zimmer's  Proposition~2.6 from \cite{LMZ-superrigidity} shows $C_d$ is dense in $\Aut(T_d)$. The exact same argument applies within the group $U(F)$, and shows $C_{d,F}$ is dense in $U(F)$. In view of Proposition \ref{prop-U(F)-coc-conjugate}, this yields the conclusion. 
\end{proof}

We recall the description of elements of $C_d$ from \cite{LMZ-superrigidity}. Let $Y$ be a finite, connected, $d$-regular graph (we often identify $Y$ and its set of vertices). We do not allow an edge to be a loop. We allow multiple edges. Fix a coloring $\overline{c}$ of the edges of $Y$ by $\{1,2,\dots,d\}$ that is locally bijective. Given a vertex $v_0$ of $T_d$ and a vertex $y_0$ of $Y$, there is a unique color preserving map $\pi: T_d \to Y$ such that $\pi(v_0) = y_0$, and it is surjective. Let $(\sigma_y)_{y \in Y}$ be a collection  of elements of $\mathrm{Sym}(d)$. We say that it satisfies the compatibility condition if $\sigma_y(\overline{c}(e)) = \sigma_{y'}(\overline{c}(e))$ for every edge $e$ of $Y$ joining two vertices $y$ and $y'$ of $Y$. We refer to Section 2 in \cite{LMZ-superrigidity} for the following result. 

\begin{Proposition} \label{prop-description-LMZ}
	Retain the above notation. Then: \begin{enumerate}
		\item \label{item-charact-element-commens} For $g \in \Aut(T_d)$ and $\gamma \in W$, we have $\gamma \in W \cap g^{-1} W g$ if and only if $\sigma(g, \gamma v) = \sigma(g, v)$ for every vertex $v$ of $T_d$.
		\item \label{item-defines-element-commens} If $(\sigma_y)_{y \in Y}$ satisfies the compatibility condition, then there is a unique $g \in \Aut(T_d)$ such that $\sigma(g,v) = \sigma_{\pi(v)}$ for every vertex $v$, and we have $g \in C_d$. 
	\end{enumerate}
\end{Proposition}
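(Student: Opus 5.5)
The plan is to reduce both parts to combinatorics of local actions along $W$-orbits, using that $W$ acts freely and transitively on the vertices of $T_d$. Fix a base vertex $v_0$. Every vertex is then $\gamma v_0$ for a unique $\gamma\in W$, so $W$ is identified with the vertex set. Each element of $W$ preserves the coloring $c$ and has trivial local actions. Two composition formulas will be used throughout:
\[ \sigma(gh,v)=\sigma(g,hv)\,\sigma(h,v), \qquad \sigma(\gamma,v)=1 \text{ for } \gamma\in W. \]

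For (\ref{item-charact-element-commens}), take $\gamma\in W$ and set $\delta=g\gamma g^{-1}$. Then $\gamma\in g^{-1}Wg$ iff $\delta\in W$, i.e.\ iff $\delta$ preserves colors, i.e.\ iff $\sigma(\delta,w)=1$ for all vertices $w$. Compute with $w=gv$:
\[ \sigma(g\gamma g^{-1},gv)=\sigma(g,\gamma v)\,\sigma(\gamma,v)\,\sigma(g^{-1},gv)=\sigma(g,\gamma v)\,\sigma(g,v)^{-1}. \]
Here we use $\sigma(g^{-1},gv)=\sigma(g,v)^{-1}$. As $v$ ranges over all vertices, $gv$ ranges over all vertices. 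So $\delta\in W$ iff $\sigma(g,\gamma v)=\sigma(g,v)$ for all $v$, which is exactly the claim.

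For (\ref{item-defines-element-commens}), build $g$ by induction on balls around $v_0$. Put $g(v_0)=v_0$. If $g$ is already defined on a vertex $v$ with image $g(v)$, send the edge of color $i$ at $v$ to the edge of color $\sigma_{\pi(v)}(i)$ at $g(v)$. This determines $g$ on the neighbours of $v$.

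Consistency along an edge $e=[v,v']$ of color $i$ has to be checked. From $v$ the edge $e$ goes to the edge of color $\sigma_{\pi(v)}(i)$ at $g(v)$. From $v'$'s side, the local action at $v'$ must send color $i$ to the same color. The edge $\pi(e)$ has color $\overline c(\pi(e))=i$, since $\pi$ is color preserving. The compatibility condition then gives $\sigma_{\pi(v)}(i)=\sigma_{\pi(v')}(i)$, so the two requirements agree. Because $T_d$ is a tree, each vertex is reached along a unique path, and the induction yields a well-defined map. It is locally bijective since each $\sigma_y$ is a permutation, and a locally bijective map $T_d\to T_d$ that is surjective on stars is an automorphism. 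Uniqueness holds with the normalization $g(v_0)=v_0$; otherwise $g$ is unique up to composing on the left with $W$, and the statement should be read with this normalization.

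To see $g\in C_d$, let $\Delta=\{\gamma\in W:\pi\circ\gamma=\pi\}$. This is the deck group of the covering $T_d\to Y$ inside $W$, and it has finite index in $W$ because $Y$ is finite. For $\gamma\in\Delta$ we get $\sigma(g,\gamma v)=\sigma_{\pi(\gamma v)}=\sigma_{\pi(v)}=\sigma(g,v)$. By part (\ref{item-charact-element-commens}), $\Delta\le W\cap g^{-1}Wg$, so $g^{-1}Wg\cap W$ has finite index in $W$.

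The symmetric finiteness for $g^{-1}$ follows the same way. The local actions of $g^{-1}$ are $\sigma(g^{-1},w)=\sigma(g,g^{-1}w)^{-1}$, and these are invariant under the finite-index subgroup $g\Delta g^{-1}\cap W$. Alternatively, apply covolume equality: $g^{-1}Wg$ and $W$ are both lattices acting freely and transitively on vertices, so finite index in one forces finite index in the other.

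The main obstacle is the exact meaning of $\pi\circ\gamma=\pi$ in the finite-index claim for $\Delta$. The subgroup $\{\gamma\in W:\pi\gamma=\pi\}$ is the stabilizer of the color-preserving cover $\pi$. Its finite index comes from $W$ acting on the finite set of color-preserving maps $T_d\to Y$, via $\gamma\cdot\pi=\pi\circ\gamma^{-1}$. This set is finite because such a map is determined by the image of $v_0$ in $Y$.
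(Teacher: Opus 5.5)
The paper gives no proof of this proposition and only cites Section 2 of \cite{LMZ-superrigidity}. Your argument is correct and is the standard route: the cocycle identity for local actions, the inductive construction of $g$ from a compatible labelling, and the finite-index subgroup $\Delta$ of $W$ preserving the covering $\pi$. You are also right that uniqueness in (\ref{item-defines-element-commens}) only holds after normalizing $g(v_0)=v_0$, since left multiplication by $W$ preserves all local actions. This normalization is exactly how the paper uses (\ref{item-defines-element-commens}) in the proof of Theorem~\ref{thm-CSP-U(F)} (``an element $g$ of $C_d$ fixing $v_0$'').

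One point needs tightening. You need $W\cap g^{-1}Wg$ to have finite index in $g^{-1}Wg$, and your first justification for this is circular. Since $\Delta\le W\cap g^{-1}Wg$, you already have $g\Delta g^{-1}\le W$, so $g\Delta g^{-1}\cap W=g\Delta g^{-1}$. Asserting that this has finite index in $W$ is precisely the claim to be proved.

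Your second, ``covolume'' argument is what closes the gap, and it can be stated as plain orbit counting. If a group acts freely and transitively on the vertex set $V$, the index of any subgroup $D$ equals $|D\backslash V|$. The conjugate $g^{-1}Wg$ also acts freely and transitively on $V$. Applying the count to $D=W\cap g^{-1}Wg$ inside both $W$ and $g^{-1}Wg$ gives $[W:D]=[g^{-1}Wg:D]$, which is finite.
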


Mozes showed that when $G = \mathrm{Aut}(T_d)$ is the full automorphism group of $T_d$ and $\Gamma$ is a cocompact lattice in $G$, then $\Gamma$ has the CSP in $\Comm_G(\Gamma)$ \cite[Theorem 1.2]{Mozes-CSP-tree}. The argument can be generalized to obtain: 

\begin{Theorem} \label{thm-CSP-U(F)}
	Suppose $F$ does not act freely on $\{1,2,\dots,d\}$. Let $\Gamma$ be a cocompact lattice in $G = U(F)$. Then $\Gamma$ has the CSP in $\Comm_G(\Gamma)$. 
\end{Theorem}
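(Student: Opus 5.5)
By Proposition \ref{prop-U(F)-coc-conjugate}, any two cocompact lattices of $U(F)$ are conjugate up to commensurability, and CSP in the commensurator is invariant under conjugation. It is also invariant under commensurability, at least in the direction we need: CSP passes to finite index subgroups, and Lemma \ref{lem-CSP-one-all} lets us go back up, since free profinite groups are torsion-free. So I would first reduce to $\Gamma = W$ and $C = C_{d,F}$. The lemma requires the target lattice to have no non-trivial finite normal subgroup. This holds for the relevant lattices: a finite normal subgroup of a cocompact lattice acting on $T_d$ would fix a nonempty invariant bounded subtree, which is impossible since the lattice acts cocompactly on the unbounded tree $T_d$. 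If instead we keep a finite normal subgroup, we can quotient first.

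For $W$ itself, the task is to show that every finite index subgroup $\Delta$ of $W$ contains a finite intersection of $C_{d,F}$-conjugates of $W$. I would follow Mozes' argument via Proposition \ref{prop-description-LMZ}. First I would shrink $\Delta$ to a normal finite index subgroup. Set $Y = \Delta\backslash T_d$; this is a finite $d$-regular graph with an induced locally bijective coloring, and it has no loops after passing to a deeper subgroup if necessary. For each $g \in C_{d,F}$ built from a compatible family $(\sigma_y)_{y\in Y}$ with values in $F$, part (\ref{item-charact-element-commens}) identifies $W \cap g^{-1}Wg$ as the set of $\gamma \in W$ with $\sigma(g,\gamma v)=\sigma(g,v)$ for all $v$. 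So the intersection over finitely many such $g$ is the set of $\gamma$ preserving all the labelings $v \mapsto \sigma_{\pi(v)}$. The goal is to choose finitely many compatible families, each with values in $F$, such that the only elements of $W$ preserving all these labelings are those preserving each fibre of $\pi$, namely the elements of $\Delta$. By part (\ref{item-defines-element-commens}), the resulting $g$ automatically lie in $U(F)$.

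The main obstacle is producing enough compatible families with values in $F$ rather than in $\mathrm{Sym}(d)$. Compatibility across an edge $e$ of color $i$ only constrains the values $\sigma_y(i)$ and $\sigma_{y'}(i)$. I would use the hypothesis that $F$ does not act freely: pick $f \in F\setminus\{1\}$ fixing some color $i_0$. Then for any vertex $y$, the family with $\sigma_y = f$ and $\sigma_{y'}=1$ elsewhere is compatible, provided every color moved by $f$ labels no edges at $y$ leading to other vertices. That last condition fails, so the construction must be refined. I would instead pass to a cover $Y$ in which $y$ has a suitable neighbourhood structure, or, more robustly, let $\sigma$ equal $f$ on a whole "half-tree-like" set of vertices $S\subset Y$ whose boundary edges all have color $i_0$. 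Such sets exist once $Y$ is chosen to be a suitable cover, for example the quotient by a subgroup $\Delta'\le\Delta$ of $W$ such that removing the color-$i_0$ edges disconnects appropriately. The labelings $\mathbf 1_S\cdot f$ for varying $S$ should then separate vertices of $Y$: if $\gamma$ preserves all of them, it maps each $S$-preimage to itself, hence it fixes fibres. This yields $\Delta' \le \Delta$ containing a finite intersection of conjugates of $W$, which gives CSP.
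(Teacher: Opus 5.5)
Your reduction to $\Gamma=W$ and $C=C_{d,F}$ via Proposition~\ref{prop-U(F)-coc-conjugate} and Lemma~\ref{lem-CSP-one-all} is the same as the paper's. One small correction: the fixed tree of a finite normal subgroup of $\Gamma$ is not bounded. It is a non-empty $\Gamma$-invariant subtree, hence all of $T_d$, because a cocompact action on a leafless tree is minimal.

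\textbf{The gap is the central step, which you leave at ``should'', and the mechanism you propose cannot work.} The family $\mathbf 1_S\cdot f$ is compatible exactly when no edge whose colour lies in $J=\{j : f(j)\neq j\}$ joins $S$ to its complement. In other words, $S$ must be a union of components of the subgraph $Y_J$ formed by the edges with colours in $J$. Since $f\neq 1$, $J\neq\emptyset$, so a vertex and its $j$-neighbour ($j\in J$) receive the same label in every such labelling.
\begin{itemize}
\item These labellings therefore never separate the vertices of $Y$.
\item Consequently ``$\gamma$ maps every $S$ to itself'' does not imply ``$\gamma$ fixes every fibre''.
\item Even using all of them on $\Delta_1\backslash T_d$, with $\Delta_1\le\Delta$ normal in $W$, they cut out exactly $\mathrm{Core}_W(W_J\Delta_1)$. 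Here $W_J$ is generated by the involutions of $W$ flipping the edges at $v_0$ with colour in $J$.
\end{itemize}
So your route would need covers with $\mathrm{Core}_W(W_J\Delta_1)\le\Delta$, and nothing in the proposal provides them. Also, a ``half-tree'' $S$ with a single boundary edge does not exist: each colour class is a perfect matching, so these graphs have no bridges.

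\textbf{The missing idea (the paper's) is that you need not separate vertices at all.} Since $W$ acts simply transitively, it suffices to force every $\gamma\in W\cap g^{-1}Wg$ to send $v_0$ into $\pi^{-1}(y_0)=\Delta v_0$. This can be done with a single $g$, because a label-preserving $\gamma$ preserves the set of label-$1$ vertices that have a neighbour with label $\neq 1$.

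Assume, as one may, that $\Delta$ preserves the bipartition. The paper builds the labelling on a double cover $Z$ of $Y$, with corresponding subgroup $\Delta'$ of index $2$ in $\Delta$:
\begin{itemize}
\item cut the $i$-edge $y_0z$ in two copies $Y_0,Y_1$ of $Y$;
\item reconnect crosswise, via $y_0^0z^1$ and $y_0^1z^0$;
\item label $Y_0$ by $1$ and $Y_1$ by $\rho$, where $\rho\in F\setminus\{1\}$ fixes $i$.
\end{itemize}
Compatibility only needs checking on the two crossing $i$-edges, where it holds since $\rho(i)=i$. The boundary is then $\{y_0^0,z^0\}$, and the paper tells these two vertices apart by type.

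If you adopt this, be aware that type only controls type-preserving elements of $W$. Take $\Delta=W^+$, the type-preserving subgroup. The colour-preserving swap $y_0\leftrightarrow z$ of the two-vertex quotient lifts to a label-preserving automorphism of $Z$, so $W\cap g^{-1}Wg$ then contains type-reversing elements, which are not in $\Delta$. What the construction really yields is $W^+\cap g^{-1}Wg\le\Delta'$ for every finite index $\Delta\le W^+$.

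This still suffices, as follows.
\begin{itemize}
\item The kernel of $\widehat W\to\widehat W^{C_{d,F}}$ meets $\widehat{W^+}$ trivially, so it has order at most $2$ and is central.
\item Map $W$ to $\mathrm{Sym}(3)$ by sending the edge-flipping involutions to transpositions generating $\mathrm{Sym}(3)$.
\item A central element outside $\widehat{W^+}$ would map to an odd central element of $\mathrm{Sym}(3)$, and none exists.
\end{itemize}
Hence the kernel is trivial, which is the CSP.
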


\begin{proof}
	According to Proposition \ref{prop-U(F)-coc-conjugate}, one can assume $\Gamma$ is commensurable with $W$. Since virtually free groups verify the conditions of Lemma \ref{lem-CSP-one-all}, and every subgroup commensurable with $\Gamma$ has no non-trivial finite normal subgroup, it is enough to show $W$ has the CSP in $C_{d,F}$. Let $\Lambda$ be a finite index subgroup of $W$. We will find $g \in C_{d,F}$ such that $W \cap g^{-1} W g \leq \Lambda$. It will be convenient to assume $\Lambda$ preserves the bi-partition of vertices of $T_d$. Again by Lemma \ref{lem-CSP-one-all} there is no loss of generality assuming this. Let $Y$ denote the finite connected $d$-regular graph $ \Lambda \backslash T_d$. We fix a vertex $v_0$ of $T_d$, and denote by $y_0$ the image of $v_0$ in $Y$. We also fix $i \in \{1,2,\dots,d\}$  and $\rho \in F$ a non-trivial element fixing $i$. 
	
	First assume that the edge $e$ colored $i$ between $y_0$ and its $i$-neighbour is a separating edge of $Y$, meaning that removing $e$ yields a graph with two connected components. Call $\mathcal{C}_1$ and $\mathcal{C}_2$ these two  components, and assume $y_0 \in \mathcal{C}_1$. Let $(\sigma_y)_{y \in Y}$ be the collection of permutations defined by $\sigma_y = 1$ for $y \in \mathcal{C}_1$, and $\sigma_y = \rho$ for $y \in \mathcal{C}_2$. It satisfies the compatibility condition, and hence defines an element $g$ of $C_d$ fixing $v_0$ by item (\ref{item-defines-element-commens}) in Proposition \ref{prop-description-LMZ}. And by construction $g \in C_{d,F}$. The vertex $y_0$ is the only vertex of $Y$ for which $\sigma_y = 1$ and admitting a neighbour for which $\sigma_y \neq 1$. Therefore any vertex $v$ of $T_d$ such that $\sigma(g,v) = 1$ and there is a neighbour $w$ of $v$ such that $\sigma(g,w) \neq 1$, must lie in the fiber of $y_0$ under $\pi: T_d \to Y$  (which is the $\Lambda$-orbit of $v_0$). Item (\ref{item-charact-element-commens}) in Proposition \ref{prop-description-LMZ} asserts the subgroup $W \cap g^{-1} W g$ preserves the set of vertices $v$ with this property. Therefore the $(W \cap g^{-1} W g)$-orbit of $v_0$ is contained in the $\Lambda$-orbit of $v_0$. Since actions are free, $W \cap g^{-1} W g$ is contained in $\Lambda$, as desired. 
	
	Now assume that the edge colored $i$ between $y_0$ and its $i$-neighbour is non-separating. We define a new graph $Z$ in the following way. We take two copies $Y_0$ and $Y_1$ of $Y$ with base vertices $y_0^0$ and $y_0^1$, and call $z^0$ and $z^1$ the $i$-neighbours of $y_0^0$ and $y_0^1$. We remove the $i$-edge between $y_0^i$ and $z^i$  in each copy, and put a $i$-edge between $y_0^0$ and $z^1$ and between $y_0^1$ and $z^0$. This new graph is connected by the assumption that the removed edge was non-separating in each copy. And $Z$ is a $2$-fold cover of $Y$. Let $\Lambda'$ be the corresponding index two subgroup of $\Lambda$. We will find $g \in C_{d,F}$ such that $W \cap g^{-1} W g \leq \Lambda'$. Let $(\sigma_z)_{z \in Z}$ be the collection of permutations defined by $\sigma_z = 1$ for $z \in Y_0$, and $\sigma_y = \rho$ for $y \in Y_1$. As before, applying item (\ref{item-defines-element-commens})  in Proposition \ref{prop-description-LMZ} (with base vertex $y_0^0$ in $Z$) this yields a well defined element $g \in C_{d,F}$ fixing $v_0$. There are exactly two vertices of $Z$ for which $\sigma_z = 1$ and admitting a neighbour for which $\sigma_z \neq 1$, namely $y_0^0$ and $z^0$. Since these two vertices do not have the same type (type has a meaning in the quotient graph by the assumption that $\Lambda$ preserves the bi-partition  of $T_d$), $y_0^0$ is the only vertex of the given type with the property that we have $\sigma_z = 1$ and admitting a neighbour for which $\sigma_z \neq 1$. As before, using (\ref{item-charact-element-commens})  in Proposition \ref{prop-description-LMZ} we infer $W \cap g^{-1} W g$ is contained in $\Lambda'$. 
\end{proof}

We now conclude the proof of Theorem~\ref{thm-intro-U(F)}.

\begin{Theorem} \label{thm-main-U(F)}
	Let $\Gamma$ a cocompact lattice of $G = U(F)$. Then, up to commensurability, $\Gamma$ is the only infinite finitely generated commensurated subgroup of $\Comm_G(\Gamma)$.
\end{Theorem}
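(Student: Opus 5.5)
Apply Corollary \ref{cor-combined-arguments-tree} to $G = U(F)$ acting on $T_d$, after disposing of the degenerate case. If $F$ acts freely on $\{1,\dots,d\}$, then $U(F)$ is discrete. Hence any cocompact lattice $\Gamma$ has finite index in $G$, and so $\Comm_G(\Gamma) = G$ is itself virtually $\Gamma$. Any finitely generated subgroup of $G$ is then virtually contained in $\Gamma$, which is virtually free. Proposition \ref{prop-commens-free} then shows that every infinite finitely generated commensurated subgroup is commensurable with $\Gamma$. So assume $F$ does not act freely. Then $U(F)$ is non-discrete, vertex-transitive (hence cocompact), and closed in $\Aut(T_d)$.

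Now verify the three hypotheses of Corollary \ref{cor-combined-arguments-tree}.
\begin{enumerate}[(A)]
\item Density of $\Comm_G(\Gamma)$ is Proposition \ref{prop-dense-comm-U(F)}.
\item The CSP is Theorem \ref{thm-CSP-U(F)}, proved above.
\item This leaves the normal subgroup condition. Here one uses that $U(F)$ satisfies Tits' independence property $(P)$, which is standard from the definition, since local actions at different vertices can be prescribed independently. Moreover $U(F)$ acts minimally, being vertex-transitive, and fixes no end, since $T_d$ with $d \geq 3$ has infinitely many ends and a vertex-transitive group cannot fix an end. Theorem \ref{thm-Tits} then gives that every non-trivial normal subgroup contains $G^+ = U(F)^+$, which is open.
\end{enumerate}

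It remains to identify $\Res(G)$ and $\QZ(G)$. Since $G^+$ is contained in every non-trivial normal subgroup, it is contained in every open normal subgroup, so $G^+ \leq \Res(G)$. Conversely $G^+$ is itself open and normal, so $\Res(G) = G^+$, which is open. For the quasi-center: $G^+$ is non-discrete (it contains the edge fixators, which are nontrivial because $F$ has a nontrivial point stabilizer). It is also topologically simple, or at least has trivial quasi-center; I would argue this directly. Since $\QZ(G)$ is normal, it is either trivial or contains $G^+$. If it contained $G^+$, then $G^+$ would have many elements with open centralizer. This contradicts the fact that $G$ has a compact open subgroup with trivial normal core together with the independence property, via the standard Burger--Mozes argument that $\QZ$ of a non-discrete group with property $(P)$ is trivial. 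Hence $\QZ(G) = 1$, which is discrete, and every normal subgroup is trivial (so lies in $\QZ(G)$) or contains $\Res(G)$.

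I expect the quasi-center step to be the main obstacle. The cleanest route seems to be a direct argument: an element $g \neq 1$ whose centralizer contains a vertex-stabilizer ball fixator $U_n$ cannot exist. Given $g$, property $(P)$ lets one build an element of $U_n$ supported on a half-tree far from $g$'s displacement region that does not commute with $g$. This needs a nontrivial local action in $F$ fixing a color, which is available because $F$ is not free. With (A)--(C) established, Corollary \ref{cor-combined-arguments-tree} yields the theorem.
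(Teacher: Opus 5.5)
Your proposal follows the paper's proof. You dispose of the free case with Proposition~\ref{prop-commens-free}, then apply Corollary~\ref{cor-combined-arguments-tree} with density from Proposition~\ref{prop-dense-comm-U(F)}, the normal-subgroup condition from Theorem~\ref{thm-Tits} via property (P), and the CSP from Theorem~\ref{thm-CSP-U(F)}. The paper states the normal-subgroup condition without further comment, so your identification $\Res(G)=G^+$ and your treatment of $\QZ(G)$ go beyond it. However, two of the justifications you add there are wrong as written, though both are easy to repair.

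\emph{No fixed end.} A vertex-transitive group can fix an end: the end stabilizer $\Aut(T_d)_\xi$ is transitive on each horosphere and contains translations of length $1$. The conclusion that $U(F)$ fixes no end is nevertheless true. $U(F)$ contains $W$. The element of $W$ sending one endpoint of an edge $e$ to the other has trivial local actions, so it maps $e$ to the edge of the same colour at the image vertex, i.e.\ it inverts $e$. An automorphism fixing an end cannot invert an edge.

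\emph{Trivial quasi-center.} In your direct argument for $\QZ(G)=1$, a half-tree lying far from the set of vertices moved by $g$ is fixed pointwise by $g$, and then any $h$ supported on it commutes with $g$. What you need is a half-tree $T'$ with three properties: it is disjoint from the ball fixed by $U_n$; it satisfies $gT'\cap T'=\emptyset$; and its defining edge has a colour $i$ fixed by some $\rho\neq 1$ in $F$. Such a $T'$ exists for every $g \neq 1$. If $g$ is hyperbolic, branch off the axis far away. Otherwise go deep into a half-tree that $g$ maps to a disjoint one, either through a non-fixed neighbour of a fixed vertex or across an inverted edge. Let $h$ be the automorphism fixing the complement of $T'$ pointwise and having local action $\rho$ at every vertex of $T'$. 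It exists because $\rho(i)=i$, and it is a non-trivial element of $U_n$. Then $ghg^{-1}$ fixes $T'$ pointwise while $h$ does not, so $ghg^{-1}\neq h$.
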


\begin{proof}
The statement follows immediately from Proposition~\ref{prop-commens-free} if $F$ acts freely, so we may assume $F$ does not act freely.  Then $G$ and $\Gamma$ satisfy the conditions of Corollary~\ref{cor-combined-arguments-tree}: \ref{item-dense} by Proposition~\ref{prop-dense-comm-U(F)}, \ref{item-normal} by Theorem~\ref{thm-Tits} since $U(F)$ has Tits' independence property (P), and \ref{item-CSP} by Theorem~\ref{thm-CSP-U(F)}.
\end{proof}

\begin{Remark}
	Whether the group $U(F)$ is locally finitely generated is characterized in terms of properties of $F$ \cite[Corollary 0.2]{BM-U(F)-loc-finite-gen}. In particular some groups $U(F)$ are locally finitely generated, for example $U(\mathrm{Alt}(d))$ for $d \ge 6$. For those the conclusion of Theorem \ref{thm-main-U(F)} cannot be obtained thanks to Theorem \ref{thm-intro-main-not-loc-fg}. 
\end{Remark}

\begin{Remark}
	We expect Corollary~\ref{cor-combined-arguments-tree} to be applicable in a wider setting than Theorem \ref{thm-main-U(F)}. The case of a closed cocompact subgroup $G$ of $\Aut(T)$ with property $(P_k)$ from \cite{BanksElderWillis}, a generalization of property (P), seems to be natural to consider.  For such a $G$, the analogue of Theorem~\ref{thm-Tits} is \cite[Theorem~7.3]{BanksElderWillis}, so condition \ref{item-normal} of Corollary~\ref{cor-combined-arguments-tree} is satisfied.  Condition \ref{item-dense} is also satisfied: the statement of Proposition \ref{prop-U(F)-coc-conjugate} is covered by \cite[Theorem B-A.1]{Shepherd-GGD-22}, and Shepherd's construction of the conjugating element $g$ allows it to be taken from any identity neighborhood in $G$, see \cite[Remark~5.2]{Shepherd-GGD-22}, ensuring the commensurator is dense. Hence if $\Gamma$ has CSP in $G$, then Corollary~\ref{cor-combined-arguments-tree} applies.
\end{Remark}

\section{Groups of automorphisms of right angled buildings} \label{sec-buildings}

We refer to \cite{Davis-buildings, Hag-Pau-arbo} for a more comprehensive treatment of the notions presented here. Let $(W,S)$ be a right-angled Coxeter system. The cardinality of $S$, called the rank, is assumed to be finite. We denote by $\mathcal{G}(W,S)$ the graph with vertex set $S$ and edges $\left\lbrace s,t \right\rbrace $ if and only if $m_{s,t} = 2$, where $(m_{s,t})$ is the Coxeter matrix of $(W,S)$. Conversely every simplicial graph $\mathcal{G}$ with vertex set $S$ defines a right-angled Coxeter system $(W,S)$, with Coxeter matrix  defined by $m_{s,s} = 1$, $m_{s,t} = 2$ if $\left\lbrace s,t \right\rbrace $ is an edge in $\mathcal{G}$ and $m_{s,t} = \infty$ otherwise. Recall that $(W,S)$ is irreducible if there does not exist a non-trivial partition $S = S_1 \cup S_2$ with $m_{s,t} = 2$ for every $s \in S_1, t \in S_2$. Equivalently, the complement graph of $\mathcal{G}(W,S)$ is connected.

For every family $(q_s)_{s \in S}$ of integers $ \geq 2$, there exists a unique up to isomorphism right-angled building $X$ of type $(W,S)$ that is semi-regular of thickness $(q_s)_{s \in S}$, meaning that for every $s \in S$ each panel of type $s$ has thickness $q_s$ \cite[Theorem 5.1]{Davis-buildings}, \cite[Proposition 1.2]{Hag-Pau-arbo}. We say that $X$ is \textbf{thick} if $q_s \geq 3$ for every $s \in S$. We say that $X$ is \textbf{irreducible} if $(W,S)$ is.

We denote by $\Aut(X)$ the group of all automorphisms of $X$, i.e.\ the group of automorphisms of the chamber graph of $X$. The group  $\Aut(X)$ is equipped with the compact open topology for the action on the chamber set of $X$ (viewed as a discrete set). We also denote by $\Aut_0(X)$ the group of type-preserving automorphisms of $X$. This is a closed and cocompact subgroup of $\Aut(X)$.

 Let $P = (P_s)_{s \in S}$ be a family of finite groups, which are all supposed to be non-trivial. The \textbf{graph product } $\Gamma_P$ of the family $P$ associated to $(W,S)$ (or to the graph $\mathcal{G}(W,S)$)  is the quotient of the free product $\ast_S P_s$ obtained by adding relations $[P_s,P_t] = 1$ for every $s,t$ such that $m_{s,t} = 2$. The natural homomorphism $P_s \to \Gamma_P$ is injective for every $s$, and $\Gamma_P$ is  generated by their images. The group $\Gamma_P$ is naturally the chamber system of a semi-regular right-angled building $X$ of type $(W,S)$, whose thickness $(q_s)_{s \in S}$ is $q_s = |P_s|$. We say $\Gamma_P$ is irreducible if $(W,S)$ is. The left action of $\Gamma_P$ on itself induces an action on $X$ by automorphisms. From now on we view $\Gamma_P$ systematically as a cocompact lattice of $\Aut(X)$. The action of $\Gamma_P$ on $X$ is by type-preserving automorphisms, so $\Gamma_P$ actually lies inside $\Aut_0(X)$.

  We set some notation that will be used later on. The set of chambers of the right-angled building $X$ is denoted $\mathrm{Ch}(X)$. More generally if $R$ is a residue of $X$, the chamber set of $R$ is denoted $\mathrm{Ch}(R)$. For $c \in \mathrm{Ch}(X)$ and $R$ a residue of $X$, there is a unique $c' \in \mathrm{Ch}(R)$ that is closest to $c$, called the projection of $c$ on $R$ and denoted $\mathrm{pr}_R(c)$.  If $R,R'$ are residues of $X$, the set of projections $\mathrm{pr}_R(c)$, where $c$ ranges over $\mathrm{Ch}(R')$, forms a residue, denoted $\mathrm{pr}_R(R')$. The residues $R,R'$ are parallel if $\mathrm{pr}_R(R')=R$ and $\mathrm{pr}_{R'}(R) = R'$. For $I \subset  S$ we let $ I^\perp = \left\lbrace s' \in S \, | \, s' \notin I  \,  \text{and}  \, ss' = s's \,  \text{for every}  \, s\in I \right\rbrace$. In the special case $I = \left\lbrace s\right\rbrace $ we write $s^\perp $ instead of $\left\lbrace s\right\rbrace ^\perp $.

 \subsection{Normal and commensurated subgroups}

\begin{Proposition} \label{prop-inside-lattice}
	Let $\Gamma_P$ be an irreducible graph product of finite groups. Let $H$ be an infinite finitely generated commensurated subgroup of $\Gamma_P$. Then $H$ has finite index in $\Gamma_P$. 
\end{Proposition}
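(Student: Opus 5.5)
The plan is to reduce to a statement about commensurated subgroups acting on the Davis complex, and then use the rank-one geometry that irreducibility provides. First the degenerate cases. If $|S|=1$, then $\Gamma_P$ is finite and there is nothing to prove. If $(W,S)$ is irreducible and $|S|=2$, the two generators do not commute, so $\Gamma_P = P_s * P_t$ is virtually free, and the statement is exactly Proposition \ref{prop-commens-free} applied to $H$ intersected with a free subgroup of finite index. So we may assume $|S|\geq 3$. Then $(W,S)$ is irreducible and non-spherical, and since it is right-angled it is not affine unless it is the infinite dihedral group. Hence $W$, and with it $\Gamma_P$, is not virtually abelian.

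Next we pass to the geometry. The building $X$ has a Davis realization $\Sigma$, a locally finite CAT(0) cube complex on which $\Gamma_P$ acts properly and cocompactly. Irreducibility of $(W,S)$ means the complement of $\mathcal{G}(W,S)$ is connected. This should imply that $\Sigma$ is irreducible, i.e.\ does not split as a product, and that the action is essential without fixed point at infinity. By Caprace--Sageev rank rigidity, $\Gamma_P$ then contains rank-one (contracting) isometries, and $\Gamma_P$ is acylindrically hyperbolic; the latter is also Minasyan--Osin for graph products. Since $H$ is infinite and commensurated, $H\cap gHg^{-1}$ is infinite for every $g$, so $H$ is $s$-normal. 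By Osin's results, $H$ is then itself acylindrically hyperbolic with respect to the same action and contains a loxodromic element $h$. From this we get that the limit set $\Lambda H\subseteq\partial\Sigma$ is non-empty. Because commensurable subgroups have the same limit set, $\Lambda H$ is $\Gamma_P$-invariant, and minimality of the $\Gamma_P$-action on its limit set (rank one case) gives $\Lambda H = \partial\Sigma$.

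The key step, and the one I expect to be the main obstacle, is upgrading "full limit set" to "finite index", since a priori $H$ need not be convex cocompact. Here I would use finite generation of $H$ together with the hyperplane structure. Fix an $H$-orbit and take its cubical convex hull $Y\subseteq\Sigma$. The goal is to show that $Y$ is $H$-cocompact; equivalently, that $H$ is quasiconvex. Then $Y$ is at finite Hausdorff distance from $\Sigma$ (full limit set plus cocompactness of $\Gamma_P$), so $H$ is cocompact on $\Sigma$ and has finite index.

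To get quasiconvexity I would exploit commensuration through the hyperplanes. For each hyperplane $\mathfrak{h}$ of $\Sigma$, its stabilizer is commensurable with a conjugate of a special subgroup $\Gamma_{s^\perp}$. Since $H$ and $gHg^{-1}$ are commensurable and $\Gamma_P$ acts cocompactly, only finitely many $H$-orbits of hyperplanes should be crossed by geodesics between points of $Y$ within bounded distance of a fixed basepoint. This is the analogue of M.\ Hall's free-factor argument in Proposition \ref{prop-commens-free}. Should this direct approach fail, the fallback is induction on $|S|$ via the amalgam $\Gamma_P=\Gamma_{S\setminus\{s\}}*_{\Gamma_{s^\perp}}\Gamma_{\mathrm{st}(s)}$, for $s$ chosen so that the complement graph stays connected. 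In that case, study the action of the commensurated subgroup $H$ on the Bass--Serre tree: its minimal subtree is at bounded distance from its $\Gamma_P$-translates, hence is the whole tree, and one concludes by combining this with the induction hypothesis on vertex groups.
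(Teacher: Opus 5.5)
Your main route has a genuine gap at exactly the step you flag, and the argument you sketch cannot close it.

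\emph{The main route.} Everything you establish before that step also holds in a setting where the conclusion is false. Take $\pi_1(M)=\pi_1(\Sigma)\rtimes\Z$ for a closed hyperbolic $3$-manifold fibering over the circle. It acts properly and cocompactly on a CAT(0) cube complex (Kahn--Markovic, Bergeron--Wise), which is irreducible and rank one, and $\pi_1(M)$ is acylindrically hyperbolic. Now take $H=\pi_1(\Sigma)$. It is infinite, finitely generated, normal (hence commensurated), has full limit set, and has infinite index, so it is not quasiconvex. Hence ``finitely generated + commensurated + full limit set $\Rightarrow$ quasiconvex'' is false in the generality of your sketch. The sentence ``only finitely many $H$-orbits of hyperplanes should be crossed\ldots'' uses only commensuration and cocompactness. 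Your remark that hyperplane stabilizers are special subgroups is never actually used. Some feature specific to graph products must enter, and the main route does not say which one.

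\emph{The fallback.} It uses the right splitting: it is the tree of residues of types $s\cup s^\perp$ and $S\setminus\{s\}$ that the paper uses. But two ingredients are missing.

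First, your ``minimal subtree'' argument presupposes that $H$ contains a hyperbolic element. You must exclude that $H$ fixes a vertex, i.e.\ stabilizes a proper residue. The paper does this as follows.
\begin{enumerate}
\item Take a residue $R$ of type $I$, of minimal rank among residues stabilized by a subgroup commensurable with $H$.
\item Choose a translate $R'=gR$ that is not parallel to $R$; this is possible because $I\cup I^\perp\neq S$ by irreducibility.
\item Then $H\cap gHg^{-1}$ stabilizes $\mathrm{pr}_R(R')$, which has smaller rank, a contradiction.
\end{enumerate}

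Second, concluding by ``induction on vertex groups'' does not work. The vertex stabilizers $H\cap g\Gamma_v g^{-1}$ need not be finitely generated. Moreover the star group $\Gamma_{s\cup s^\perp}=P_s\times\Gamma_{s^\perp}$ is reducible, so the induction hypothesis does not apply to it.

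The idea you are missing is to work in the Schlichting completion $\Gamma_P/\!\!/H$, writing $\tau$ for the map $\Gamma_P\to\Gamma_P/\!\!/H$.
\begin{enumerate}
\item Since $H$ is finitely generated and acts minimally on the tree, it has finitely many orbits of edges. So $H\backslash\Gamma_P/\Gamma_{s^\perp}$ is finite, and $\overline{\tau(\Gamma_{s^\perp})}$ is cocompact in $\Gamma_P/\!\!/H$.
\item Because $\Gamma_{s^\perp}$ centralizes $P_s$, Proposition~\ref{prop-coc-cpt-normal} places the finite group $\tau(P_s)$ inside a compact normal subgroup.
\item Doing this for every $s\in S$ shows that the dense subgroup $\tau(\Gamma_P)$ is relatively compact. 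So $\Gamma_P/\!\!/H$ is compact.
\item It has a compact open subgroup with trivial normal core, so it is finite, i.e.\ $H$ has finite index in $\Gamma_P$.
\end{enumerate}
This uses exactly the graph-product feature that the edge group commutes with a vertex generator, and it needs no induction.
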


\begin{proof}
	Let $(W,S)$, $P = (P_s)_{s \in S}$ as in the definition of $\Gamma_P$, and let $X$ be the right angled building associated to $\Gamma_P$. For simplicity in this proof we write $\Gamma$ instead of $\Gamma_P$. We first show that $H$ cannot stabilize a proper residue of $X$. Assume for a contradiction that $R$ is a residue of type $I \subsetneq S$ stabilized by $H$, and choose $R$ such that $R$ has minimal rank among residues stabilized by a subgroup commensurable with $H$. Note that since $H$ is infinite and $\Gamma$ acts properly on $X$ we necessarily have $I \neq \emptyset$.  Let $R'$ be another residue of type $I$. Since $\Gamma$ acts chamber transitively on $X$, there is $g \in \Gamma$ such that $R' = g(R)$. It follows that the subgroup $H ' = H \cap gHg^{-1}$, which has finite index in $H$, stabilizes both $R$ and $R'$. By the irreducibility assumption and since $I \neq S$, we have $I \cup I^\perp \neq S$. In particular by \cite[Proposition 2.7]{Cap-RAB} one can find $R'$ of type $I$ such that $R$ and $R'$ are not parallel. Since $H'$ stabilizes both of them, it also stabilizes $\mathrm{pr}_R(R')$, which is a residue of rank strictly less than the one of $R$. This contradicts the above minimality assumption.
	
	We consider the completion $\Gamma /\! \! / H$ and denote by $\tau_{\Gamma,H} : \Gamma  \to\Gamma /\! \! / H$ the homomorphism from $\Gamma$ to $\Gamma /\! \! / H$. We want to establish the:

	\textbf{Claim:} for every $s \in S$, there is a compact normal subgroup $K_s$ of $L$ such that $\tau_{\Gamma,H}(P_s) \leq K_s$. 
	
	Suppose that the claim is proven. Since the subgroup generated by finitely many compact normal subgroups is compact and $\Gamma$ is generated by the subgroups $(P_s)_{s \in S}$, it follows  that $\tau_{\Gamma,H}(\Gamma)$ is relatively compact in $\Gamma /\! \! / H$. But $\tau_{\Gamma,H}(\Gamma)$ being a dense subgroup of $\Gamma /\! \! / H$, we infer that $\Gamma /\! \! / H$ is compact. Since by definition  $\Gamma /\! \! / H$ admits a compact open subgroup with trivial normal core, we infer $\Gamma /\! \! / H$ is finite, and $H$ has finite index in $\Gamma$. The remainder of the proof is dedicated to proving the claim. 
	
	Fix $s \in S$. Clearly the statement of the proposition holds if $|S|=1$, so we can assume $|S| \geq 2$.  Since $(W,S)$ is irreducible, $s \cup s^\perp \subsetneq S$. Let $I_- = s \cup s^\perp $, $I_+ = S \setminus \left\lbrace s \right\rbrace $ and $I_0 = I_- \cap I_+ = s^\perp$. Consider the graph $T$ whose vertices are residues of $X$ of type $I_-$ or $I_+$, with an edge between a residue $R_-$ of type $I_-$ and a residue $R_+$ of type $I_+$ if $R_-$ and $R_+$ share a residue of type $I_0$. According to \cite[Lemma 4.3]{Hag-Pau-arbo}, the graph $T$ is a tree. The group $\Gamma$ acts on $T$ without inversion, with two orbits of vertices and one orbit of edges. Since $H$ is a commensurated subgroup of $\Gamma$, we deduce from \cite[Proposition 4.2]{LB-Wes-commens} that either $H$ fixes a vertex of $T$, or the $H$-action on $T$ is minimal. By the first part of this proof, we know that $H$ cannot stabilize any residue of type  $I_-$ or $I_+$. So it follows that $H$ acts minimally on $T$. Since $H$ is finitely generated, this implies that $H$ acts on $T$ with finitely many orbits of edges \cite[5.6]{Bass-Lub-book}.
	
	Now let $c_0$ be the chamber of $X$ corresponding to the identity element of $\Gamma$. Let $R$ be the residue of $X$ of type $s^\perp$ containing $c_0$, and let $\Gamma_R$ be its stabilizer in $\Gamma$. We can view $R$ as an edge of $T$, and by the above paragraph the $\Gamma$-orbit of $R$ is divided into finitely many $H$-orbits. Equivalently, $H \backslash \Gamma / \Gamma_R$ is finite. If $\gamma_1, \ldots, \gamma_n \in \Gamma$ are such that $\Gamma = \bigcup_i H \gamma_i \Gamma_R$, then $\Gamma /\! \! / H =\overline{ \tau_{\Gamma,H}(\Gamma)} =  \bigcup_i \overline{ \tau_{\Gamma,H}(H)} \tau_{\Gamma,H}(\gamma_i) \overline{\tau_{\Gamma,H}(B)}$. Since $\overline{ \tau_{\Gamma,H}(H)}$ is compact, $\overline{\tau_{\Gamma,H}(\Gamma_R)}$ is a cocompact subgroup of $\Gamma /\! \! / H$. Now $\Gamma_R$ is the subgroup of $\Gamma$ generated by the subgroups $P_{s'}$ when $s' $ ranges over $s ^\perp$, so $\Gamma_R$ commutes with $P_s$. Therefore the centraliser of $\tau_{\Gamma,H}(P_s)$ is a cocompact subgroup of $\Gamma /\! \! / H$. By Proposition \ref{prop-coc-cpt-normal} there must exist a compact normal subgroup $K_s$ of $\Gamma /\! \! / H$ such that $\tau_{\Gamma,H}(P_s) \leq K_s$, which concludes the proof of the claim.
\end{proof}

\begin{Theorem}[{Caprace \cite[Theorem~1.1]{Cap-RAB}}] \label{thm-RAB-simple}
	Let $X$ be a thick irreducible semi-regular right angled building of non-spherical type $(W,S)$. Then the group $\Aut_0(X)$ is abstractly simple. 
\end{Theorem}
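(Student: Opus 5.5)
The plan is to run a Tits-style argument, as in Theorem~\ref{thm-Tits}, with the half-trees of a tree replaced by the \emph{wings} of the right-angled building $X$. For a chamber $c$ and $s \in S$, let $\mathcal{P}_s(c)$ be the $s$-panel containing $c$. Define the $s$-wing $X_s(c)$ as the set of chambers $x$ with $\mathrm{pr}_{\mathcal{P}_s(c)}(x) = c$. Let $V_s(c)$ be the subgroup of $\Aut_0(X)$ fixing every chamber outside $X_s(c)$; this is the analogue of the fixator of a half-tree.

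The first step is an \emph{extension and independence lemma}, proved from the right-angled structure by induction along galleries, using projections onto residues of type $I \cup I^\perp$. It asserts two things. First, any permutation of an $s$-panel $\mathcal{P}$ extends to a type-preserving automorphism. Second, the fixator of $\mathcal{P}$ splits as the direct product of the groups $V_s(c)$, $c \in \mathrm{Ch}(\mathcal{P})$. Moreover, for each $c \in \mathcal{P}$, $V_s(c)$ acts transitively on the chambers of $X_s(c)$ at a given distance pattern, in particular on the $t$-panels inside the wing. This is the analogue of Tits' property $(P)$.

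The second step shows that the subgroup $\Aut_0(X)^+$ generated by all the $V_s(c)$ is all of $\Aut_0(X)$. By the extension lemma, wing groups act transitively on each panel minus one chamber, and thickness gives at least two such chambers. Hence $\Aut_0(X)^+$ is chamber-transitive. It then remains to show that the stabilizer of a chamber $c_0$ lies in $\Aut_0(X)^+$. Take $g$ fixing $c_0$. By independence, one can correct $g$ on each $s$-panel through $c_0$ by elements of $V_s(c')$, $c' \neq c_0$, and then proceed outward ball by ball. The corrections converge in the compact chamber stabilizer. This exhibits $g$ as a product of wing-fixator elements, or at least gives that $\Aut_0(X)^+$ is closed, open and of finite index with trivial quotient. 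This bookkeeping is routine but must be done carefully.

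The third step, which I expect to be the main obstacle, uses irreducibility and non-spherical type. Let $N \neq 1$ be a normal subgroup and pick $g \in N$ nontrivial. The aim is to find a wing $X_s(c)$ with $g X_s(c) \cap X_s(c) = \emptyset$, or more precisely with $gX_s(c)$ contained in the complement of $X_s(c)$. In a tree this comes from a moving edge. Here I would use \cite[Proposition 2.7]{Cap-RAB}-type facts on non-parallel residues, together with $I \cup I^\perp \neq S$, to show that wings can be nested arbitrarily deep inside any given wing. Since $g$ moves some chamber, one can then find a small wing pushed off itself. Given such a wing, for $h, h' \in V_s(c)$ the commutator $[g,h]$ lies in $N$ and acts as $h$ on $X_s(c)$ and as $g h^{-1} g^{-1}$ on $gX_s(c)$. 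Commuting again with $h'$ shows $[h,h'] \in N$, so $N \supseteq [V_s(c),V_s(c)]$.

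Finally, I would show that $V_s(c)$ is generated by commutators. It contains a product of infinitely many conjugate copies of subgroups indexed by deeper disjoint wings, so the standard infinite-product commutator trick applies, using the thickness $q_s \geq 3$ for the panel permutations. Conjugating over chamber- and type-transitivity then gives $N \supseteq \Aut_0(X)^+ = \Aut_0(X)$.
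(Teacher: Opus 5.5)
The paper does not prove this statement; it is quoted from Caprace \cite{Cap-RAB}. Judged on its own, your outline has a genuine gap at the last step.

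Step 3 only gives $[V_s(c),V_s(c)]\le N$. You then claim that $V_s(c)$ is generated by commutators, and this is false. Take $W=D_\infty$: then $X$ is a bi-regular tree, and this case satisfies all the hypotheses. Here $V_s(c)$ is the fixator of a half-tree. Writing $c=\{v,w\}$ with $v$ of type $s$, it fixes $c$ and induces $\mathrm{Sym}(q_t-1)$ on the other edges at $w$, where $q_t-1\ge 2$. Composing with the sign gives a nontrivial homomorphism $V_s(c)\to\{\pm1\}$ that kills every commutator.

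The same obstruction exists in general. For $t$ with $m_{st}=\infty$ one has $X\setminus X_s(c)\subseteq X_t(c)$, because $s$ and $t$ cannot both be right descents of $\delta(x,c)$. Extension results of the kind you invoke in Step 1 then let $V_s(c)$ induce odd permutations of the $t$-panel of $c$. So no argument that stays inside $V_s(c)$ can promote $[V_s(c),V_s(c)]\le N$ to $V_s(c)\le N$.

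The infinite-product trick does not prove perfectness either. Given a wing $D$ and an element $h$ with $hD\subsetneq D$ and $\bigcap_n h^nD=\emptyset$, and given $x$ supported on $D$, set $y=\prod_{n\ge 0}h^nxh^{-n}\in\Aut_0(X)$. Then $x=y\,h\,y^{-1}h^{-1}$. Such an $h$ lies in no compact subgroup. The identity places $x$ in $N$ only when $h\in N$.

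What is missing is the analogue of Tits' lemma that a nontrivial normal subgroup contains a hyperbolic element. Starting from $1\neq g\in N$, you must produce $h\in N$ that translates some wing $D$ strictly into itself, with $\bigcap_n h^nD=\emptyset$. One way is as a product of two elements of $N$ supported on suitably placed wings, in the spirit of a product of two elliptic elements with disjoint fixed sets being hyperbolic. This is where the nesting of wings, irreducibility and non-spherical type genuinely enter. A wing pushed \emph{off} itself, as in your Step 3, does not suffice. Once such an $h$ is available, the double-commutator step is unnecessary: the identity above gives $V(D)\le N$ directly, and conjugation together with Step 2 finishes the proof.

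Step 2 also needs two repairs. First, the groups $V_s(c')$ with $c'\in\mathcal{P}_s(c_0)$ fix $\mathcal{P}_s(c_0)$ pointwise, so they cannot correct $g$ on that panel. Use $V_t(c_0)$ with $m_{st}=\infty$ instead. Second, an infinite convergent product is not an element of the abstractly generated group $\Aut_0(X)^+$. Argue instead that $\Aut_0(X)^+$ contains the open subgroup $\mathrm{Fix}(\mathcal{P})=\prod_{c\in\mathcal{P}}V_s(c)$. This contains the fixator of $B(c_0,1)$, so only a finite correction on $B(c_0,1)$ is needed.
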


\begin{Theorem} \label{thm-normal-subgroups-AutX}
	Let $X$ be a thick irreducible semi-regular right angled building of non-spherical type $(W,S)$, and write $G = \Aut(X)$. Then: \begin{enumerate} \item $\Res(G)$ is a cocompact normal subgroup of $G$ containing $\Aut_0(X)$; \item  $G$ has a maximal compact normal subgroup $K$, and $K \cap \Aut_0(X)$ is trivial; \item  every closed normal subgroup of $G$ is either contained in $K$, or contains $\Res(G)$. \end{enumerate}
\end{Theorem}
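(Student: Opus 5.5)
Write $G_0 = \Aut_0(X)$. The plan is to combine the simplicity of $G_0$ (Theorem~\ref{thm-RAB-simple}) with the fact that $G_0$ is closed, cocompact and (as we will check) open of finite index in $G$. The first step is to see that $G_0$ is open and normal in $G$. It is normal because conjugating a type-preserving automorphism by any automorphism gives a type-preserving automorphism. It is open because an automorphism fixing a chamber $c$ induces a permutation of $S$ fixing the type of each panel through $c$, so it is type-preserving. Hence $\Aut(X)_c \leq G_0$. Moreover $G/G_0$ embeds in the finite group of diagram automorphisms of $(W,S)$, so $G_0$ has finite index.

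For (1): $G_0$ is open normal, so $\Res(G) \leq G_0$. Next, $\Res(G)$ is normal in $G_0$, and $G_0$ is abstractly simple. So $\Res(G)$ is either trivial or equal to $G_0$. If $\Res(G)$ were trivial, the open normal subgroups of $G$ would intersect trivially. Intersecting with $G_0$ then gives proper open normal subgroups of $G_0$, which must be trivial by simplicity. That forces $G_0$ to be discrete, which is false since $X$ is thick and of non-spherical type (chamber stabilisers in $G_0$ are infinite). Hence $\Res(G) = G_0$, which is cocompact.

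For (2) and (3), take $N$ a closed normal subgroup of $G$. Then $N \cap G_0$ is normal in $G_0$, hence trivial or $G_0$. If $N \cap G_0 = G_0$, then $N$ contains $\Res(G)$. If $N \cap G_0 = 1$, then $N$ embeds in $G/G_0$, so $N$ is finite. Also $[N,G_0] \leq N \cap G_0 = 1$, so $N$ centralises $G_0$.

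Let $K$ be the centraliser of $G_0$ in $G$. It is closed and normal, and $K \cap G_0 = Z(G_0) = 1$ because $G_0$ is simple and non-abelian. So $K$ embeds in $G/G_0$ and is finite. By the previous paragraph every closed normal subgroup not containing $\Res(G)$ lies in $K$. This also shows $K$ contains every compact normal subgroup $M$: such an $M$ cannot contain the non-compact group $G_0$, since $G_0$ acts cocompactly on the locally finite, infinite building $X$. Thus $K$ is the maximal compact normal subgroup, with $K \cap G_0 = 1$, giving (2) and (3).

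The main obstacle is the first step: verifying carefully that a chamber stabiliser is type-preserving, and that $G_0$ is non-discrete (for instance via Caprace's description, or directly by constructing non-trivial automorphisms fixing large balls using thickness $q_s \geq 3$). The rest is formal group theory. An alternative to computing $K$ as the centraliser is to apply Proposition~\ref{prop-coc-cpt-normal} only if needed.
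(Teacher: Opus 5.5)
Your proof has a genuine gap. Every later step relies on your first claim, that $G_0=\Aut_0(X)$ is open, normal and of finite index in $G$. That claim is false at the generality of the theorem. The paper's introduction notes that $\Aut_0(X)$ is in general of infinite index in $\Aut(X)$.

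Here $G$ is the automorphism group of the \emph{chamber graph}. Its elements do send panels to panels, since panels are the maximal cliques. But the permutation of types they induce may change from chamber to chamber. So a chamber stabiliser need not lie in $G_0$. Also, conjugating a type-preserving automorphism by an arbitrary one need not give a type-preserving automorphism.

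A concrete example: let $W$ be free on $S=\{s_1,s_2,s_3\}$ with all $q_s=3$.
\begin{itemize}
\item The chamber graph is a tree of triangles.
\item $G$ is the group of bipartition-preserving automorphisms of the $3$-regular chamber/panel incidence tree. This group is abstractly simple by Tits.
\item $G_0$ is the closed subgroup preserving the labelling of panel vertices by type. It is cocompact but not open and not normal, and it has infinite index; a chamber stabiliser in $G$ can rotate the three panels at its chamber.
\end{itemize}
In this example $\Res(G)=G\neq G_0$, so your conclusion $\Res(G)=G_0$ fails. Moreover there is no quotient group $G/G_0$ for $N$ to embed in.

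Part of your argument can be repaired.
\begin{itemize}
\item For (1), take any open normal $O$ in $G$. Then $O\cap G_0$ is open and normal in $G_0$, and non-trivial because $G_0$ is non-discrete. By Theorem~\ref{thm-RAB-simple} it equals $G_0$. Hence $G_0\le\Res(G)$, and $\Res(G)$ is cocompact.
\item If $N$ is closed, normal and contains $G_0$, then $N$ is cocompact. So $G/N$ is profinite, and therefore $N\supseteq\Res(G)$.
\end{itemize}

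The real content of (2) and (3) is the case $N\cap G_0=1$, and there both of your tools disappear. $N$ does not embed in a finite group. The inclusion $[N,G_0]\le N\cap G_0$ would need $N$ to normalise $G_0$. You need a genuinely new input to show such an $N$ is compact.

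The paper gets this input from the lattice $\Gamma_P$.
\begin{itemize}
\item For $U$ compact open, $O=NU$ is open and commensurated. It is also compactly generated, since all open subgroups of $\Aut(X)$ are, by \cite{DeMedts-Silva-RAB}.
\item Hence $\Gamma_P\cap O$ is a finitely generated commensurated subgroup of $\Gamma_P$. By Proposition~\ref{prop-inside-lattice} it is finite or of finite index.
\item So every closed normal subgroup of $G$ is compact or cocompact.
\item Take $K=\RadLE(G)$. It is compact, because it cannot contain $\Res(G)\supseteq G_0$, which is not locally elliptic. It contains every compact normal subgroup. It meets $G_0$ trivially because $G_0$ is simple and non-compact.
\end{itemize}

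Your centraliser construction of $K$, and the argument as a whole, only goes through in the special case where every chamber-graph automorphism permutes $S$ by a single global diagram automorphism.
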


\begin{proof}
	By Theorem  \ref{thm-RAB-simple} the group $\Aut_0(X)$  is abstractly simple and non-discrete. The inclusion $\Aut_0(X) \leq \Res(G)$ follows. Let now $N$ be a closed normal subgroup of $G$, $U$ a compact open subgroup of $G$ and $O := NU$. According to \cite[Proposition 5.3]{DeMedts-Silva-RAB}, the group $\Aut_0(X)$ has the property that all its open subgroups are compactly generated. Since that property is inherited from a closed cocompact subgroup to an ambient group, the group $\Aut(X)$ has this property as well. In particular $O$ is compactly generated.  Since $N$ is normal and $U$ is commensurated in $G$, $O$ is also a commensurated subgroup of $G$. The subgroup $H = \Gamma_P \cap O$ is therefore commensurated in $\Gamma_P$, and finitely generated because it is cocompact in the compactly generated group $O$. By Proposition \ref{prop-inside-lattice} it follows that $H$ is finite or finite index in $\Gamma_P$. In the first case we infer $O$ is compact, and therefore $N$ is compact. In the second case we infer $O$ has finite index in $G$, and therefore $N$ is cocompact. Since any closed cocompact subgroup contains $\Res(G)$, we have shown every closed normal subgroup of $G$ is either compact or contains $\Res(G)$. When applied to the LE-radical $\RadLE(G)$, which cannot contain  $\Res(G)$ because $\Aut_0(X)$ is not locally elliptic, we infer $\RadLE(G)$ is compact. $K = \RadLE(G)$ is therefore  a maximal compact normal subgroup, and we have shown the conclusion. 
\end{proof}

\subsection{Local actions on spheres}

The goal of this subsection is to prove Proposition \ref{prop-RAB-loc-hom}, which provides the lack of local finite generation needed to apply Theorem~\ref{thm-main-not-loc-fg} in the present setting.

Let $c\in \mathrm{Ch}(X)$. For  $n \geq 1$, let $S(c,n)$ be the sphere around $c$ in the chamber graph, i..e.\ the set of chambers of $X$ at distance $n$ from $c$. We also let $B(c,n)$ be the set of chambers at distance at most $n$ from $c$. Let $U$ denote the stabilizer of $c$ in $\Aut(X)$, which is a compact open subgroup of $\Aut(X)$. The action of $U$ on $X$ stabilizes $S(c,n)$ for all $n$, and hence induces a continuous  homomorphism $U \to \mathrm{Sym}(S(c,n))$. By composing with the signature and considering the product over positive integers, we obtain a continuous homomorphism $\varphi : U \to \prod_{n \geq 1} \left\lbrace \pm 1\right\rbrace $. 

\begin{Proposition} \label{prop-RAB-loc-hom}
Let $c\in \mathrm{Ch}(X)$. Let $U$ denote the stabilizer of $c$ in $\Aut(X)$, and $U_0 = U \cap \Aut_0(X)$ . Then the continuous homomorphism $\varphi : U \to  \prod_{n \geq 1} \left\lbrace \pm 1\right\rbrace $ associated to the action of $U$ on spheres around $c$ is such that the restriction of $\varphi$ to $U_0$ is surjective. 
\end{Proposition}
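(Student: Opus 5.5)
We plan to reduce surjectivity to a triangular statement. For each $n \geq 1$ we produce an element $u_n \in U_0$ acting trivially on the ball $B(c,n-1)$ and acting as an odd permutation on the sphere $S(c,n)$. Given such elements, the image $\varphi(U_0)$ is a closed subgroup of $\prod_{n\ge1}\{\pm1\}$. Also, $\varphi(u_n)$ has trivial coordinates at indices $<n$ and coordinate $-1$ at index $n$. A standard triangular argument then shows that $\varphi(U_0)$ is surjective. Indeed, finite products of the $u_n$ realize every pattern on the first $N$ coordinates, so $\varphi(U_0)$ is dense. Since $U_0$ is compact, the image is closed, hence everything.

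To construct $u_n$, we use the standard extension property of semi-regular right-angled buildings (Caprace \cite{Cap-RAB}, see also Haglund--Paulin). Fix a chamber $d \in S(c,n)$ and a type $s \in S$ such that $\ell(w s) > \ell(w)$, where $w = \delta(c,d)$, i.e.\ $s$ extends the gallery outward. Then the $s$-panel $\mathcal{P}$ of $d$ meets $B(c,n)$ only in $d$. Its other $q_s-1$ chambers lie in $S(c,n+1)$, which is the wrong sphere. So we instead take $d'\in S(c,n-1)$ and an $s$-panel $\mathcal{P}$ through $d'$ with $\mathrm{pr}_{\mathcal{P}}(c)=d'$. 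Then $\mathcal{P}\setminus\{d'\}\subseteq S(c,n)$, and these are $q_s-1 \geq 1$ chambers. We use the property that type-preserving automorphisms fixing $d'$ and the wing ``behind'' can permute the panel freely. More precisely, for a panel $\mathcal{P}$ of type $s$ and any permutation $\sigma$ of $\mathrm{Ch}(\mathcal{P})$ fixing $d'$, there is $g\in\Aut_0(X)$ that fixes the $s$-wing $X_s(d')$ pointwise and induces $\sigma$ on $\mathcal{P}$. We take $\sigma$ to be a transposition of two chambers of $\mathcal{P}\setminus\{d'\}$ when $q_s\geq 3$. If some relevant $q_s=2$, we modify the choice of panel or use a transposition together with compensation. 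The wing $X_s(d')$ contains $c$, since $\mathrm{pr}_{\mathcal P}(c)=d'$, and it contains all of $B(c,n-1)$. So $g$ fixes $B(c,n-1)$ and $g \in U_0$.

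The main obstacle is computing the parity of $g$ on $S(c,n)$. The element $g$ moves not only the two chambers of $\mathcal{P}$ but also the whole wings behind them, namely $X_s(e)\cap S(c,n)$ for $e \in \mathcal{P}\setminus\{d'\}$. The wing $X_s(e)$ meets $S(c,n)$ only in $e$, because any other chamber of that wing is farther from $c$ than $e$. So on $S(c,n)$, $g$ acts exactly as a transposition of the two chambers, together with whatever happens in other wings of $d'$ that are fixed. This gives an odd permutation, as desired. The delicate point is making sure the wing-fixing extension really restricts to precisely $\sigma$ on $S(c,n)$, i.e.\ that $S(c,n)\cap X_s(e)=\{e\}$ for $e\in\mathcal{P}\setminus\{d'\}$; this follows from the projection properties of wings. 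When all $q_s=2$, the building is thin and $U_0$ is trivial, but thickness is not assumed in the statement. For that case we note that $S(c,n)$ may still admit odd permutations via panels of some type with $q_s\ge3$, and otherwise the statement must be checked under the paper's standing assumptions. We would first handle the thick case as above.
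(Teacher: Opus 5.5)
\textbf{There is a genuine gap in the construction of $u_n$.} The only condition you impose on the panel is $\mathrm{pr}_{\mathcal P}(c)=d'$. Under that condition, both wing facts you rely on are false whenever $\delta(c,d')$ has a reduced expression ending with a letter of $s^\perp$. These facts are $B(c,n-1)\subseteq X_s(d')$, and $X_s(e)\cap S(c,n)=\{e\}$ for $e\in\mathcal P\setminus\{d'\}$.

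Here is a concrete counterexample with $n=2$.
\begin{itemize}
\item Take $t\in s^\perp$, let $d'$ be a $t$-neighbour of $c$, and let $\mathcal P$ be the $s$-panel of $d'$. Then indeed $\mathrm{pr}_{\mathcal P}(c)=d'$.
\item The $\{s,t\}$-residue of $c$ is a product of an $s$-panel and a $t$-panel. Hence an $s$-neighbour $c_1$ of $c$ projects onto $\mathcal P$ at the chamber $e_1\in\mathcal P$ that is $t$-adjacent to $c_1$.
\item So $c_1\in X_s(e_1)\cap S(c,1)$. Moreover $X_s(e_1)\cap S(c,2)$ contains all $q_t-1\geq 2$ $t$-neighbours of $c_1$, one of which is $e_1$.
\item Let $g$ fix $X_s(d')$ pointwise and swap $e_1,e_2\in\mathcal P$. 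Since $g$ is type-preserving and fixes $c$, it maps $c_1$ to the $s$-neighbour of $c$ lying in $X_s(e_2)$.
\item In fact $g$ induces a single transposition on $S(c,1)$. So $\varphi(g)$ already has first coordinate $-1$, and your triangular argument breaks.
\end{itemize}
Note also that your argument never uses that $(W,S)$ is non-spherical. Yet the proposition is false in spherical type: the building is finite, so $S(c,n)=\emptyset$ for $n$ large. This is a symptom of the gap.

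\textbf{The missing idea is to control the projection of $c$ onto the whole residue $R$ of type $s\cup s^\perp$ containing $\mathcal P$, not just onto $\mathcal P$.}
\begin{itemize}
\item Non-sphericity is exactly what provides $s$ with $s\cup s^\perp\neq S$.
\item Pick $u\in S\setminus(s\cup s^\perp)$, and a chamber $d'$ such that $\delta(c,d')$ is the alternating word in $s,u$ of length $n-1$ ending with $u$. This word has right descent set $\{u\}$. Hence $d'=\mathrm{pr}_R(c)$, where $R$ is the $(s\cup s^\perp)$-residue of $d'$.
\item With $d'=\mathrm{pr}_R(c)$, the wings $X_s(e)$ for $e\in\mathcal P\setminus\{d'\}$ meet $B(c,n)$ only in $e$. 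This is what Lemma \ref{lem-Caprace-extends} (Caprace's Lemma 6.3), used by the paper, encodes.
\item That lemma yields $g\in\Aut_0(X)$ inducing a transposition on $\mathcal P$ and acting trivially on $B(c,n)\setminus\mathrm{Ch}(\mathcal P)$. The transposition exists since $q_s\geq 3$ under the standing thickness assumption, so your $q_s=2$ discussion is unnecessary.
\end{itemize}
With this correction, your triangular-plus-compactness argument becomes the paper's proof.
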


The proof of the proposition relies on the following, which is a special case of \cite[Lemma 6.3]{Cap-RAB}. 

\begin{Lemma} \label{lem-Caprace-extends}
Let $X$ be a semi-regular right angled building of type $(W,S)$, and $c\in \mathrm{Ch}(X)$. Let $s \in S$, and $R$ a residue of type $s \cup s^\perp$. Let $c' = \mathrm{pr}_R(c)$, and let $\sigma$ be the panel of $c'$ of type $s$, and let $n$ be the distance from $c$ to $c'$. Let $T$ be a permutation of $ \mathrm{Ch}(\sigma)$ such that $T$ fixes $c'$. Then there exists $g \in \Aut_0(X)$ whose restriction to $\mathrm{Ch}(\sigma)$ is equal to $T$, and such that $g$ acts trivially on $B(c,n+1) \setminus \mathrm{Ch}(\sigma)$. 
\end{Lemma}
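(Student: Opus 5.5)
The plan is to realise $g$ as a ``wing permutation'' in the sense of Caprace \cite{Cap-RAB}. For $d \in \mathrm{Ch}(\sigma)$, let the $s$-wing $X_s(d)$ be the set of chambers $x$ with $\mathrm{pr}_\sigma(x) = d$. These wings partition $\mathrm{Ch}(X)$. I will construct $g \in \Aut_0(X)$ satisfying three conditions:
\begin{enumerate}
\item $g$ stabilises $\sigma$ and induces $T$ on $\mathrm{Ch}(\sigma)$;
\item $g$ maps each wing $X_s(d)$ onto $X_s(T(d))$;
\item $g$ is the identity on the wing $X_s(c')$.
\end{enumerate}
Since $T$ fixes $c'$, conditions (2) and (3) are compatible.

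\textbf{Construction.} For each $d \in \mathrm{Ch}(\sigma)$ with $d \neq c'$, choose a type-preserving bijection $\varphi_d : X_s(d) \to X_s(T(d))$ with $\varphi_d(d) = T(d)$ that preserves adjacency inside wings. Such bijections exist by semi-regularity, because any two $s$-wings are isomorphic as chamber systems. Then define $g$ to be $\varphi_d$ on $X_s(d)$ and the identity on $X_s(c')$.

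\textbf{Main obstacle.} The key point, and the step I expect to be hardest, is checking that $g$ preserves adjacency between chambers lying in \emph{different} wings. Two adjacent chambers in distinct $s$-wings must be $s$-adjacent, and their $s$-panel is parallel to $\sigma$. Such panels are exactly the $s$-panels of the residue $R_\sigma$ of type $s \cup s^\perp$ containing $\sigma$; here $R_\sigma = R$, since $c' \in R$. This residue is a product of $\sigma$ with a residue of type $s^\perp$. Each parallel panel $\sigma'$ meets every wing in exactly one chamber, namely the chamber $\mathrm{pr}_{\sigma'}(d)$ for the relevant $d$. So I will choose the $\varphi_d$ compatibly with this product structure: first define $\varphi_d$ on $X_s(d) \cap R$ via the product decomposition, by acting as $T$ on the $\sigma$-coordinate and as the identity on the $s^\perp$-coordinate. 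Then extend $\varphi_d$ to the whole wing by induction on distance from $R$, using the tree-like, right-angled structure of wings, as in Caprace's extension lemmas. With this choice, $g$ maps each parallel panel $\sigma'$ to itself, inducing the transported permutation $\mathrm{pr}_{\sigma'} \circ T \circ \mathrm{pr}_{\sigma}$. Hence $s$-adjacency across wings is preserved, $g$ is an automorphism, and it is type-preserving by construction.

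\textbf{Triviality on the ball.} It remains to see that $g$ acts trivially on $B(c,n+1) \setminus \mathrm{Ch}(\sigma)$. Since $\sigma \subseteq R$ and $c' = \mathrm{pr}_R(c)$, we get $\mathrm{pr}_\sigma(c) = c'$, so $c \in X_s(c')$. Now take $x \in X_s(d)$ with $d \neq c'$. By the gate property, every minimal gallery from $c$ to $x$ passes through $c'$ and then through $d$. Therefore
\[ d(c,x) = d(c,c') + 1 + d(d,x) = n + 1 + d(d,x). \]
Consequently $d(c,x) \le n+1$ forces $x = d \in \mathrm{Ch}(\sigma)$. So every chamber of $B(c,n+1) \setminus \mathrm{Ch}(\sigma)$ lies in $X_s(c')$, where $g$ is the identity. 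This gives the required $g$.
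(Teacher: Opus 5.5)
Your overall plan is sound: glue wing isomorphisms $X_s(d)\to X_s(T(d))$ with the identity on $X_s(c')$, then show $B(c,n+1)\setminus\mathrm{Ch}(\sigma)\subseteq X_s(c')$. The paper gives no proof beyond citing Caprace. Your analysis of adjacency across wings is correct: it happens only through $s$-panels of $R\cong\sigma\times Q$, with $Q$ of type $s^\perp$. Extending the $\varphi_d$ beyond $R$ is only sketched, but that is the same input the paper takes from Caprace. The genuine problem is the final step.

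You use $c'=\mathrm{pr}_R(c)$ only to get $\mathrm{pr}_\sigma(c)=c'$. You then claim the gate property forces minimal galleries from $c$ to $x\in X_s(d)$, $d\neq c'$, to pass through $c'$ and $d$. The gate property for $\sigma$ gives only $d(c,d)=n+1$ and $d(c',x)=1+d(d,x)$; it does not make a gallery from $c$ to $x$ meet $\sigma$. In fact, from $\mathrm{pr}_\sigma(c)=c'$ alone your conclusion is false whenever $s^\perp\neq\emptyset$. Take $t\in s^\perp$ and let $c$ be $t$-adjacent to $c'$, so $n=1$ and $\mathrm{pr}_\sigma(c)=c'$. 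Let $x$ be the chamber of the $s$-panel of $c$ (a panel parallel to $\sigma$) lying in $X_s(d)$. Then $x\notin\mathrm{Ch}(\sigma)$ and $d(c,x)=1$, not $n+1+d(d,x)=3$. Any $g$ satisfying your condition (2) moves $x$ as soon as $T(d)\neq d$. This is consistent with the lemma only because here $c\in R$, so $\mathrm{pr}_R(c)=c\neq c'$. The hypothesis that $c'$ is the projection onto the whole residue of type $s\cup s^\perp$ is exactly what must enter here, and your argument never uses it. Even under the correct hypothesis, the claim that every minimal gallery passes through $d$ fails, for instance when $x$ is $t$-adjacent to $d$ with $t\in s^\perp$; only the distance formula is true.

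You can repair the step with your own tools. For $x\in X_s(d)$ with $d\neq c'$, let $z$ be the first chamber outside $X_s(c')$ on a minimal gallery from $c$ to $x$. Its predecessor lies in $X_s(c')$, so by your cross-wing analysis $z\in\mathrm{Ch}(R)$. The gate property for $R$ then gives
\[ d(c,x)\;\geq\; d(c,z)\;=\;n+d(c',z)\;\geq\; n+1 . \]
If $d(c,x)\leq n+1$, then $x=z$ and $d(c',z)=1$. Since $\mathrm{pr}_\sigma(z)\neq c'$, the gate property for $\sigma$ forces $z=\mathrm{pr}_\sigma(z)\in\mathrm{Ch}(\sigma)$. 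Hence $B(c,n+1)\setminus\mathrm{Ch}(\sigma)\subseteq X_s(c')$, where $g$ is the identity.
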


\begin{proof}[Proof of Proposition \ref{prop-RAB-loc-hom}]
Let $K = \prod_{n \geq 1} \left\lbrace \pm 1\right\rbrace $, and let $K_n$ be the kernel of the projection from $K$ to the first $n$ coordinates, and $F_n = K/K_n$. We have to show that $U_0 \to F_n$ is surjective for all $n \geq 1$. It is enough to show that the image of $U_0 \to F_n$ contains $(1,\ldots,1,-1)$ for every $n \geq 1$.

Since $(W,S)$ is not spherical, one can find $s \in S$ such that $s \cup s^\perp \subsetneq S$. So for every panel $\sigma$ of type $s$, the unique residue $R$ of type $s \cup s^\perp$ containing  $\sigma$ is a proper residue of $X$. It follows that given $n \geq 1$, one can find such a residue $R$ such that $c_n = \mathrm{pr}_R(c)$ is at distance $n$ from $c$. Let $\sigma_n$ be the panel of type $s$ of $c_n$. According to Lemma \ref{lem-Caprace-extends}, for every permutation $T$ of $\mathrm{Ch}(\sigma_n)$ such that $T$ fixes $c_n$, there exists $g \in \Aut_0(X)$ whose restriction to $\mathrm{Ch}(\sigma_n)$ is equal to $T$, and such that $g$ acts trivially on $B(c,n+1) \setminus \mathrm{Ch}(\sigma_n)$. Since $\sigma_n$ has $q_s \geq 3$ chambers, it is possible to take $T$ as above being a transposition. The associated $g$ belongs to $U_0$ and maps to $(1,\ldots,1,-1)$ in $F_{n+1}$.  
\end{proof}

\begin{Corollary} \label{cor-RAB-not-loc-fg}
	Let $X$ be a thick semi-regular right angled building of non-spherical type $(W,S)$, and write $G = \Aut(X)$. If $J$ is a closed subgroup of $\Aut(X)$ such that $\Aut_0(X) \leq J$, then $J$ is not locally finitely generated. 
\end{Corollary}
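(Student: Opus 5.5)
Suppose for a contradiction that $J$ is locally finitely generated, so some compact open subgroup $V$ of $J$ is topologically finitely generated. The plan is to produce a continuous surjection from a compact open subgroup of $J$ onto an infinite profinite group of exponent $2$. Such a group is never finitely generated, and any continuous image of a finitely generated profinite group is finitely generated.

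First fix a chamber $c$ and let $U$ be its stabilizer in $\Aut(X)$, with $U_0 = U \cap \Aut_0(X)$. Then $U_J := U \cap J$ is a compact open subgroup of $J$ containing $U_0$, since $\Aut_0(X) \leq J$. All compact open subgroups of $J$ are commensurable, so $U_J$ is also topologically finitely generated: $U_J \cap V$ has finite index in $V$, hence is finitely generated, and it has finite index in $U_J$. Restrict the homomorphism $\varphi : U \to \prod_{n \geq 1} \{\pm 1\}$ to $U_J$. Proposition~\ref{prop-RAB-loc-hom} says $\varphi(U_0) = \prod_{n\ge1}\{\pm1\}$, and $U_0 \le U_J$, so $\varphi|_{U_J}$ is a continuous surjection onto $\prod_{n \geq 1}\{\pm 1\}$.

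The image $\prod_{n\ge1}\{\pm1\}$ would therefore be topologically finitely generated. But a finitely generated profinite group of exponent $2$ is abelian, and if it has $k$ topological generators it has at most $2^k$ elements. This contradicts the infinitude of $\prod_{n \geq 1}\{\pm1\}$, so $J$ is not locally finitely generated.

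The only step that needs care is checking that Proposition~\ref{prop-RAB-loc-hom} applies. Its proof needs thickness, so that a panel has at least $3$ chambers and a transposition fixing $c_n$ exists, and non-sphericity, to find $s$ with $s \cup s^\perp \subsetneq S$. Both are exactly the hypotheses of the corollary; irreducibility is not needed. Everything else is formal.
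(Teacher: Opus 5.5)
Your proposal is correct and follows the paper's proof. Both restrict $\varphi$ to $U \cap J \supseteq U_0$, use Proposition~\ref{prop-RAB-loc-hom} to get a continuous surjection onto $\prod_{n \geq 1}\{\pm 1\}$, and conclude because that group is not topologically finitely generated; you also spell out the commensurability reduction and why an infinite exponent-$2$ profinite group cannot be finitely generated, which the paper leaves implicit.
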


\begin{proof}
	Let $c\in \mathrm{Ch}(X)$, $U$ the stabilizer of $c$ in $\Aut(X)$, and $U_0 = U \cap \Aut_0(X)$. By
Proposition \ref{prop-RAB-loc-hom} any intermediate subgroup $U_0 \leq V \leq U$ surjects onto $\prod_{n \geq 1} \left\lbrace \pm 1\right\rbrace $.  This compact group is not topologically finitely generated, so $V$ inherits this property. Applying this to $V = H \cap U$ gives the conclusion. 
\end{proof}

\subsection{The proof of Theorem \ref{thm-intro-main-RAB}}

We will invoke the following result proven by Haglund \cite[Theorem 4.30]{Hag-08-GeoDe} and Kubena--Thomas \cite[Density Theorem]{KubTho}. 

\begin{Theorem} \label{thm-RAB-commens-dense}
Let $X$ be the semi-regular right angled building associated to a graph product of finite groups $\Gamma_P$. Then the commensurator of $\Gamma_P$ in $\Aut(X)$ is dense in $\Aut(X)$. 
\end{Theorem}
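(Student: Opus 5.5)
The plan is to approximate an arbitrary $g \in \Aut(X)$, on an arbitrary finite ball $B(c,n)$ of the chamber graph, by an element of $\Comm_{\Aut(X)}(\Gamma_P)$. Since $\Gamma_P$ acts simply transitively on $\mathrm{Ch}(X)$, we may first multiply $g$ by an element of $\Gamma_P$ and assume $g$ fixes the base chamber $c = c_0$. So it suffices to show that for every $u$ in the stabilizer $U$ of $c$ and every $n$, some commensurator element agrees with $u$ on $B(c,n)$.

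We separate off the type-changing part. The element $u$ induces a permutation $\delta$ of $S$ which is a graph automorphism of $\mathcal{G}(W,S)$ with $q_{\delta(s)} = q_s$. First I would show that $\delta$ is induced by some commensurator element fixing $c$. Choose bijections $P_s \to P_{\delta(s)}$ (possible since $|P_s| = |P_{\delta(s)}|$); as bijections of sets they need not be isomorphisms. The idea is to use the "legal colouring" description of $X$ (Haglund--Paulin) to build an automorphism that is $\delta$-equivariant on types and acts on each panel through these fixed bijections, and then check, as in the tree case, that it conjugates a finite-index torsion-free subgroup of $\Gamma_P$ into $\Gamma_P$. After composing with such an element, we may assume $u \in U_0 = U \cap \Aut_0(X)$.

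For type-preserving $u$ I would use the analogue of Proposition \ref{prop-description-LMZ}. Fix, for each $s$, an identification of every $s$-panel with $P_s$ via the $\Gamma_P$-action. A type-preserving automorphism is then encoded by a family of local permutations $\sigma(u, \pi) \in \mathrm{Sym}(P_s)$ attached to panels $\pi$ of type $s$, subject to a compatibility condition along residues of type $s \cup s^\perp$. This is exactly the freedom exploited in Caprace's Lemma \ref{lem-Caprace-extends}: the permutation on a panel is constrained only by panels parallel to it. An element $\gamma \in \Gamma_P$ then lies in $\Gamma_P \cap u^{-1}\Gamma_P u$ if and only if $\gamma$ preserves the local permutation data, in analogy with item (\ref{item-charact-element-commens}). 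Hence any automorphism whose local data is invariant under a finite-index subgroup $\Lambda \le \Gamma_P$ lies in the commensurator.

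The main step, and the main obstacle, is to produce such $\Lambda$-periodic data that agrees with the data of $u$ on all panels meeting $B(c,n+1)$. I would use residual finiteness of $\Gamma_P$, together with separability of the parabolic subgroups, i.e.\ the stabilizers of $s \cup s^\perp$-residues, which are special subgroups and hence separable since $\Gamma_P$ is virtually special. This gives a finite-index $\Lambda$ such that $B(c,n+1)$ embeds in $\Lambda \backslash X$. It also ensures that two distinct parallelism classes of panels meeting the ball are not identified modulo $\Lambda$. Then define the data on a $\Lambda$-orbit of parallelism classes as $u$'s data if the orbit meets the ball, and as the identity otherwise. The compatibility condition holds automatically, because it only relates panels within one parallelism class. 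Finally, I would invoke the analogue of item (\ref{item-defines-element-commens}), i.e.\ Caprace's extension argument applied iteratively over increasing balls, to get an automorphism realizing this data. That automorphism agrees with $u$ on $B(c,n)$ and commensurates $\Gamma_P$.

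I expect the difficulty to concentrate in the separability step: one must ensure that the finitely many parallelism classes seen in the ball stay distinct in a finite quotient. Getting this from the right-angled structure is where I anticipate the real work.
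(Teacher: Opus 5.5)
The gap is in your second paragraph. You assume that $u \in U$ induces one permutation $\delta$ of $S$. For $\Aut(X)$ as defined in the paper (all automorphisms of the chamber graph) this is false. What $u$ induces is a permutation $\delta_x$ of $S$ at each chamber $x$, recording the types of the panels $u(\pi)$ with $\pi \ni x$. The only forced relation is that $\delta_x$ and $\delta_{x'}$ agree on $\{s\}\cup s^\perp$ when $x,x'$ are $s$-adjacent.

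The paper signals this itself: $\Aut_0(X)$ is in general of infinite index in $\Aut(X)$. That could not happen if $u\mapsto\delta$ were a homomorphism to $\mathrm{Sym}(S)$ with kernel $\Aut_0(X)$. Concretely, let $\mathcal{G}(W,S)$ have three vertices and no edges, with every $P_s=\Z/3$. The chamber--panel incidence graph is then the $3$-regular tree. Exchanging two branches at a chamber $x$ that is $s$-adjacent to $c$ gives an element of $U$ with $\delta_c=1$ and $\delta_x\neq 1$.

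So after your first reduction you cannot assume $u\in U_0$. Even if completed, your argument only puts $\Aut_0(X)$ and the globally type-permuting automorphisms in the closure of the commensurator. It is complete only when the type pattern is forced to be constant, e.g.\ when no non-trivial thickness-preserving automorphism of $\mathcal{G}(W,S)$ fixes some $\{s\}\cup s^\perp$ pointwise.

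The missing ingredient is to realise an arbitrary chamber-dependent pattern $x\mapsto\delta_x$ on a ball by commensurator elements. That means extending the typing of $X$ pulled back by $u$ near $B(c,n)$ to a typing invariant under a finite-index subgroup of $\Gamma_P$. The constraints on $(\delta_x)$ couple neighbouring chambers, so your cut-and-paste device (copy $u$'s data on orbits meeting the ball, trivial data elsewhere) does not apply to type data. You cannot pass from $\delta_x$ to the identity across an $s$-panel unless $\delta_x$ fixes $\{s\}\cup s^\perp$ pointwise. The paper gives no proof of its own here; it cites Haglund and Kubena--Thomas.

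For the type-preserving part your plan does work, once one point is made precise. No identification of all $s$-panels with $P_s$ is $\Gamma_P$-invariant, because panel stabilisers act regularly on panels. With normal-form coordinates based at $c$, the automorphism of your first step built from non-homomorphic bijections $P_s\to P_{\delta(s)}$ is in general not in the commensurator. Use instead the legal colouring $\lambda_s=\rho_s$, where $\rho_s\colon\Gamma_P\to P_s$ is the retraction killing the other vertex groups.

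Left multiplication by $\gamma$ acts on $\rho_s$-colours by left translation by $\rho_s(\gamma)$. Hence $\lambda$ is invariant under the torsion-free finite-index subgroup $K=\ker(\Gamma_P\to\prod_s P_s)$, which is exactly the group of type- and colour-preserving automorphisms. With this colouring:
\begin{enumerate}
\item Type-preserving elements of $U$ correspond, by Haglund--Paulin rigidity of legal colourings, to arbitrary functions from $\{s\}\cup s^\perp$-residues to $\mathrm{Sym}(P_s)$ normalised at $c$.
\item An element $\gamma\in K$ satisfies $u\gamma u^{-1}\in K$ if and only if this function is $\gamma$-invariant.
\item Your first step becomes correct for a global $\delta$.
\end{enumerate}

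The separability step you expected to be hardest is easy. Stabilisers of $\{s\}\cup s^\perp$-residues are conjugates of special subgroups, which are retracts of the residually finite group $\Gamma_P$ and hence separable. You also do not need $B(c,n+1)$ to embed in $\Lambda\backslash X$. The real difficulty is the non-constant type data described above.
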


\begin{proof}[Proof of Theorem \ref{thm-intro-main-RAB}]
In case $X$ is of spherical type, then $\Aut(X)$ is a finite group and the statement is trivially true. So we assume that $X$ is of non-spherical type. We then have to see all the requirements of Theorem \ref{thm-main-not-loc-fg} are met. Density of the commensurator is provided by Theorem  \ref{thm-RAB-commens-dense}. The stabilizer of a chamber is a compact open subgroup of $\Aut(X)$ with trivial normal core. Theorem \ref{thm-normal-subgroups-AutX}  says every closed normal subgroup of $G$ is either compact or contains $\Res(G)$, and Corollary \ref{cor-RAB-not-loc-fg} ensures closed subgroups containing $\Res(G)$ are not locally finitely generated (because those contain $\Aut_0(X)$). Therefore all the assumptions of Theorem \ref{thm-main-not-loc-fg} are verified, and we deduce that any infinite finitely generated commensurated subgroup $H$ of $\Comm_G(\Gamma_P)$ is virtually contained in $\Gamma_P$. In other words that reduces the problem to the situation where $H$ is contained in $\Gamma_P$. The latter is treated  by Proposition \ref{prop-inside-lattice}. 
\end{proof}

\section{Hyperbolic virtually special groups}

In this section we consider the situation of hyperbolic groups that are virtually special  \cite{Haglund-Wise-GAFA-08}. Recall that by Agol's theorem this class encompasses all hyperbolic groups acting properly and cocompactly on a CAT(0) cube complex \cite{Agol-2013}. For instance this includes every hyperbolic  graph product of finite groups $\Gamma_P$ (and whether $\Gamma_P$ is hyperbolic is characterized by a simple condition on the defining graph \cite{Meier-96-graph-prod-hyp}). 

The following statement follows from works of Haglund--Wise and Wilton--Zalesskii. We are grateful to Henry Wilton and Pavel Zalesskii for a useful discussion about \cite{Wilton-Zalesskii-G&T}. 

\begin{Proposition} \label{prop-completion-hyp-special}
	Let $\Gamma$ be hyperbolic virtually special and not virtually cyclic. Then $\widehat{\Gamma}$ satisfies the conditions (\ref{item-many-primes}), (\ref{item-no-commuting-normal}), (\ref{item-no-finite-exponent-normal}) from Theorem \ref{thm-combined-arguments}. 
\end{Proposition}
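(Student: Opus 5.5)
By Remark \ref{rmq-normal-lpc-infinite-sufficient}, conditions (\ref{item-no-commuting-normal}) and (\ref{item-no-finite-exponent-normal}) both follow once we know that every infinite closed normal subgroup $M$ of $\widehat{\Gamma}$ has infinite local prime content. Condition (\ref{item-many-primes}) is the special case $M=\widehat{\Gamma}$. So the plan is to establish this single statement: \emph{every infinite closed normal subgroup $M$ of $\widehat{\Gamma}$ satisfies $\lpc(M)$ infinite.}

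All three conditions are insensitive to passing to a finite index open subgroup of $\widehat{\Gamma}$. Indeed, if $M$ is normal in $\widehat{\Gamma}$ and $\Delta \le \Gamma$ has finite index, then $M\cap \widehat{\Delta}$ is an infinite closed normal subgroup of $\widehat{\Delta}$ with the same local prime content as $M$. We may therefore replace $\Gamma$ by a finite index subgroup. By Haglund--Wise, we take $\Gamma$ to be torsion-free and special, so that $\Gamma$ embeds in a right-angled Artin group with separable quasiconvex subgroups.

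The key input is Wilton--Zalesskii \cite{Wilton-Zalesskii-G&T}. For hyperbolic virtually special $\Gamma$, and more precisely for $\Gamma$ good in the sense of Serre with quasiconvex subgroups separable, they show the following. Every nontrivial element $\gamma$ generates a closed subgroup $\overline{\langle\gamma\rangle}\cong\widehat{\Z}$ in $\widehat{\Gamma}$. Moreover, the profinite completion is ``hyperbolic-like'': a nontrivial closed normal subgroup cannot be procyclic-by-finite, and normal subgroups contain elements acting loxodromically in a suitable sense.

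The strategy for a given infinite closed normal $M$ is as follows.

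First, show that $M$ contains a copy of $\widehat{\Z}$, or at least $\prod_{p\in P}\Z_p$ for an infinite set of primes $P$. Since $\widehat{\Z}$ has every prime in its local prime content, this gives $\lpc(M)$ infinite.

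To produce such a subgroup, pick $m\in M$ of infinite order. If $M$ were torsion, then for $p$-Sylow considerations $M$ would have to be pro-finite of torsion type. Goodness of $\Gamma$ gives $\widehat{\Gamma}$ finite cohomological dimension, hence $\widehat{\Gamma}$ is torsion-free and its procyclic subgroups are of the form $\prod_p \Z_p^{\epsilon_p}$. A nontrivial such subgroup only has finitely many primes, however, so torsion-freeness alone is not enough.

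The real step is therefore this: pick $\gamma\in\Gamma$ nontrivial and form commutators $[\gamma,m]$ or conjugates $m\gamma m^{-1}\gamma^{-1}$-type products landing in $M$. I would use the Wilton--Zalesskii result that centralizers and the ``malnormality'' of maximal cyclic subgroups persist in $\widehat{\Gamma}$: the closure of a maximal cyclic subgroup $\langle\gamma\rangle$ is $\widehat{\Z}$ and is malnormal in $\widehat{\Gamma}$. Since $M$ is normal and $\widehat{\Gamma}$ is not virtually procyclic (as $\Gamma$ is not virtually cyclic), $M$ is not contained in the closure $\overline{\langle\gamma\rangle}$.

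Using a ping-pong style argument in finite quotients, or Wilton--Zalesskii's description of profinite subgroups acting on the profinite analogue of the boundary, I would then show that $M$ intersects some $\overline{\langle\gamma'\rangle}\cong\widehat{\Z}$, with $\gamma'\in\Gamma$, in an open subgroup. Open subgroups of $\widehat{\Z}$ contain all primes.

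The main obstacle is exactly this step: showing that an infinite closed normal subgroup of $\widehat{\Gamma}$ meets the closure of a cyclic subgroup of $\Gamma$ in an open subgroup, or otherwise contains a procyclic subgroup with infinitely many primes. The first thing I would try is the following. Take $\gamma$ with $\overline{\langle\gamma\rangle}\cong\widehat{\Z}$ and an element $m\in M\setminus \overline{\langle\gamma\rangle}$, and study the closed subgroup $\overline{\langle \gamma, m\gamma m^{-1}\rangle}$. By Wilton--Zalesskii this should be a free profinite product $\widehat{\Z}\amalg\widehat{\Z}$ (a profinite analogue of ping-pong for high powers). It contains $\gamma^{-1}m\gamma m^{-1}\in M$ generating a free factor-like procyclic subgroup isomorphic to $\widehat{\Z}$. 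Failing this, I would fall back on the free profinite subgroups that special groups provide, via a virtual retraction onto a free quasiconvex subgroup meeting $M$ nontrivially, and reduce to the free profinite case, where the claim is classical.
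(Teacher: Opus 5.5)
Your reduction matches the paper's. By Remark~\ref{rmq-normal-lpc-infinite-sufficient} it suffices to show that every infinite closed normal subgroup $M$ of $\widehat{\Gamma}$ has $\lpc(M)$ infinite, and this can be checked on a finite index subgroup. Condition (\ref{item-many-primes}) is also within reach of your tools: cyclic subgroups are quasiconvex, hence virtual retracts, so $\overline{\langle\gamma\rangle}\cong\widehat{\Z}$ for $1\neq\gamma$ in a torsion-free finite index subgroup. The paper instead uses a virtual surjection $\Gamma'\to\Z$.

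The gap is the core step, which you flag as the main obstacle and never close; none of your proposed routes works.

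\textbf{First route.} The target is false in general. Take $\Gamma=F_2$ and let $M$ be the kernel of the map from $\widehat{F_2}$ onto the pro-$p$ completion of $F_2$. Since $F_2$ is residually $p$, $M\cap\Gamma=1$. So $M$ contains no open subgroup of any $\overline{\langle\gamma'\rangle}$ with $1\neq\gamma'\in\Gamma$, since such a subgroup contains some $(\gamma')^k\neq 1$. Yet $M$ is infinite: it contains $\prod_{q\neq p}\Z_q$. An infinite closed normal subgroup of $\widehat{\Gamma}$ can be invisible to $\Gamma$, so no strategy that locates elements of $\Gamma$ inside $M$ can succeed.

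\textbf{Second route.} You assert that $\overline{\langle\gamma, m\gamma m^{-1}\rangle}$ is free profinite on these two generators for an arbitrary $m\in M$. Nothing you cite gives this. Ping-pong concerns high powers of elements of $\Gamma$ and says nothing about a profinite conjugator $m$. Malnormality of $\overline{\langle\gamma\rangle}$ only yields $\overline{\langle\gamma\rangle}\cap m\overline{\langle\gamma\rangle}m^{-1}=1$, which is far from freeness (closed subgroups of free profinite groups are merely projective).

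\textbf{Fallback.} It has the same hole. The free case is fine: a nontrivial closed normal subgroup of a non-abelian free profinite group has every prime in its local prime content, e.g.\ via the module structure of $U/U^q[U,U]$ for open normal $U$. But you give no reason why $M$ should meet the closure of a non-abelian free quasiconvex subgroup nontrivially, or survive the extended virtual retraction. That is essentially the whole difficulty.

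The missing ingredient is a deep result that the paper quotes rather than proves. By Wilton--Zalesskii (Theorem~3.3, Lemma~7.3 and Proposition~6.6 of their paper), there is a finite index subgroup $\Gamma_0\le\Gamma$ such that every nontrivial closed normal subgroup of $\widehat{\Gamma_0}$ contains a non-abelian free profinite subgroup. Now $M\cap\widehat{\Gamma_0}$ is nontrivial and normal in $\widehat{\Gamma_0}$, so it contains a copy of $\widehat{\Z}$. Hence $\lpc(M)$ contains every prime, and the Remark yields (\ref{item-no-commuting-normal}) and (\ref{item-no-finite-exponent-normal}). Without this theorem, or a proof of it, your argument establishes only (\ref{item-many-primes}).
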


\begin{proof}
	By \cite{Haglund-Wise-GAFA-08} there is a finite index subgroup $\Gamma'$ of $\Gamma$ and a surjective homomorphism $\Gamma' \to \Z$. This induces a surjective homomorphism $\widehat{\Gamma'} \to \widehat{\Z}$, and $\lpc(\widehat{\Gamma'})$ therefore consists of all primes. Since $\lpc(\widehat{\Gamma'}) =  \lpc(\widehat{\Gamma})$, (\ref{item-many-primes}) holds. The results of \cite{Wilton-Zalesskii-G&T} show that there is a finite index subgroup $\Gamma_0$ of $\Gamma$ such that every closed non-trivial normal subgroup of $\widehat{\Gamma_0}$ contains a non-abelian free profinite group. See Theorem 3.3, Lemma 7.3 and Proposition 6.6 in \cite{Wilton-Zalesskii-G&T}. This applies to $M \cap \widehat{\Gamma_0}$ provided $M$ is an infinite closed normal subgroup of $\widehat{\Gamma}$, and shows $\lpc(M)$ is infinite. As observed in Remark \ref{rmq-normal-lpc-infinite-sufficient}, this ensures (\ref{item-no-commuting-normal}) and (\ref{item-no-finite-exponent-normal}).
\end{proof}

\begin{Corollary} \label{cor-hyp-special-CSP}
	Let $\Gamma$ be a finitely generated cocompact lattice of a tdlc group $G$, and $C$ a dense subgroup of $G$ such that $\Gamma \leq C \leq \Comm_G(\Gamma)$. Suppose $\Gamma$ is hyperbolic virtually special and not virtually cyclic, and $\Gamma$ has the CSP in $C$. Suppose also $\QZ(G)$ is discrete, $\Res(G)$ is open, and every abstract normal subgroup of $G$ is contained in $\QZ(G)$ or contains $\Res(G)$. Then every finitely generated commensurated subgroup of $C$ is virtually contained in $\Gamma$.
\end{Corollary}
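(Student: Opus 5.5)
The plan is to reduce directly to Theorem \ref{thm-combined-arguments}. The hypotheses on $G$ ($\QZ(G)$ discrete, $\Res(G)$ open, and the dichotomy for abstract normal subgroups) are exactly those of that theorem, as are the standing assumptions on $\Gamma$, $C$ and $G$. So the only thing to verify is that the $C$-congruence completion $\widehat{\Gamma}^C$ satisfies conditions (\ref{item-many-primes}), (\ref{item-no-commuting-normal}) and (\ref{item-no-finite-exponent-normal}).

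First I would use the CSP hypothesis. By definition, $\Gamma$ having the CSP in $C$ means that the natural surjection $\widehat{\Gamma} \to \widehat{\Gamma}^C$ is an isomorphism. Equivalently, every finite index subgroup of $\Gamma$ contains an element of $\mathcal{N}_{C,\Gamma}$, so the two inverse systems are cofinal. Hence $\widehat{\Gamma}^C \simeq \widehat{\Gamma}$ as profinite groups.

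Second, I would invoke Proposition \ref{prop-completion-hyp-special}. Since $\Gamma$ is hyperbolic, virtually special and not virtually cyclic, $\widehat{\Gamma}$ satisfies (\ref{item-many-primes}), (\ref{item-no-commuting-normal}) and (\ref{item-no-finite-exponent-normal}). These conditions are stated purely in terms of the isomorphism type of the profinite group: local prime content, closed normal subgroups, and closed embeddings into $F^I$. They therefore transfer to $\widehat{\Gamma}^C$ through the isomorphism.

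Theorem \ref{thm-combined-arguments} then gives the conclusion. There is no real obstacle here. The only point to check is that the identification of $\widehat{\Gamma}^C$ with the closure of $\tau_{C,\Gamma}(\Gamma)$ in $C /\!\!/ \Gamma$, used in the proof of Theorem \ref{thm-combined-arguments}, agrees with the abstract completion to which the CSP refers. This is the identification recorded in the subsection on completions.
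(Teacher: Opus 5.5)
Your proposal is correct and follows the paper's own proof, which simply combines Theorem~\ref{thm-combined-arguments} with Proposition~\ref{prop-completion-hyp-special}. The CSP hypothesis identifies $\widehat{\Gamma}^C$ with $\widehat{\Gamma}$, which is exactly the step you make explicit so that the three conditions transfer.
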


\begin{proof}
Follows from Theorem \ref{thm-combined-arguments} and Proposition \ref{prop-completion-hyp-special}. 
\end{proof}

\bibliographystyle{amsalpha}
\bibliography{commensurator}

\end{document}